\documentclass[12pt]{amsart}
\usepackage{amsmath,amssymb,amsthm, verbatim, amscd, amsfonts}
\usepackage{setspace}

\pagestyle{headings} \setlength{\parindent}{12pt}
\setlength{\textwidth}{6in} \setlength{\oddsidemargin}{0in}
\setlength{\evensidemargin}{0in} \addtolength{\textheight}{.5in}
\setcounter{tocdepth}{1}

\newcommand{\V}{\ensuremath{\vspace{.1in}}}

\newcommand{\R}{\ensuremath{\mathbb{R}}}

\newcommand{\N}{\ensuremath{\mathbb{N}}}
\newcommand{\Z}{\ensuremath{\mathbb{Z}}}
\newcommand{\Q}{\ensuremath{\mathbb{Q}}}
\newcommand{\A}{\ensuremath{\mathbb{A}}}
\newcommand{\T}{\ensuremath{\it{Tri}}}

\theoremstyle{definition}

\theoremstyle{remark}

\theoremstyle{plain}

\newtheorem{theorem}{Theorem}[section]

\newtheorem{lemma}{Lemma}[section]
\newtheorem{cor}{Corollary}[section]
\newtheorem{prop}{Proposition}[section]

\newtheorem{kor}[theorem]{Corollary}
\newtheorem{con}{Conjecture}
\newtheorem{defin}{Definition}

\theoremstyle{definition}

\title{The Density of the Set of Trisectable Angles}
\author{Peter J. Kahn}

\begin{document}

\renewcommand{\thefootnote}{\fnsymbol{footnote}}
\maketitle

\vspace{.2in}

\begin{center}
Department of Mathematics\\
Cornell University\\
Ithaca, New York 14853\\
December 23, 2010\\
Revised July 22, 2011
\end{center}

\vspace{.2in}

\begin{abstract} It has been known for almost 200 years that some angles cannot be trisected by straightedge and compass alone.   This paper studies the set of such angles  as well as its complement $\mathcal{T}$, both regarded as subsets of the unit circle $S^1$.  It is easy to show that both are topologically dense in $S^1$ and that $\mathcal{T}$ is contained in the countable set $\mathcal{A}$ of all angles whose cosines (or, equivalently, sines) are algebraic numbers (Corollary 3.2). Thus, $\mathcal{T}$ is a very ``thin'' subset of $S^1$.  Pushing further in this direction, let $K$ be a real algebraic number field,  and let $\mathcal{T}_K$ denote the set of trisectable angles with cosines in $K$.  We conjecture that the ``computational density'' of $\mathcal{T}_K$ in $K$ is zero and  prove this when $K$ has  degree $\leq 2$ (cf. \S 1.2 and Theorem 4.1). In addition to some introductory field theory, the paper uses elementary counting arguments to generalize a theorem of Lehmer (Theorem 5.2)on the density of the set of relatively prime $n$-tuples of positive integers.
\end{abstract}

\vspace{.2in}



\vspace{.2in}
\section{Introduction}\label{S:introduction}

\V

In 1837, P.L. Wantzel proved that there exist angles that cannot be trisected by strict use of straightedge and compass alone \cite{wan} (see also, \S 1.1 below for further discussion).  An easy observation based on his argument (Cor. 3.2) shows that the cosine of each trisectable angle must be an algebraic number, and thus the trisectable angles comprise merely a countable subset of the unit circle $S^1$.  This paper pushes further in the direction of showing how rare trisectable angles are.  To this end, we   let $\mathcal{A}\subset S^1$ be the countable set of all angles with algebraic cosines and let $\mathcal{T}\subseteq\mathcal{A}$ be the subset of trisectables. The main result of this paper (Theorem 1.1 or Theorem 4.1) provides evidence for the conjecture that $\mathcal{T}$ is a very ``thin'' subset of $\mathcal{A}$.

\V

Statements of our results will be given at the end of this introduction, as will a description of how the remainder of the paper is organized.  First, however, we present some  context for the angle trisection problem.

\subsection{A thumbnail history of the angle trisection problem}

\vspace{.1in}

The classical angle trisection problem, which originated with Greek mathematicians in the 5th century B.C.E.,  requires that for each angle $\alpha$ one find a geometric procedure that starts with  $\alpha$ and ends with $\alpha/3$.  From that period on, numerous solutions to the problem have been given \cite{dud}, many of them very ingenious. None of these, however, make exclusive use of the simple straightedge and compass, requiring instead auxiliary devices: e.g., marks on the straightedge, special curves, such as the Quadratrix of Hippias and the Conchoid of Nicomedes \cite{dud}, and other devices, such as the ``Shoemaker's knife''\cite{hen}.

A purist strain in Greek geometry, said to originate with Plato and encoded in the axiomatics of Euclid's \emph{Elements}, devalued the use of such auxiliary methods in geometric constructions. Hence constructions using only compass and straightedge  have been called Euclidean \cite{dud}.
What has come to be called ``the angle trisection problem'' in subsequent times is the problem of finding, for each $\alpha$, a \emph{Euclidean} geometric construction that starts with  $\alpha$ and produces $\alpha/3$.

A \emph{Euclidean geometric construction} may be described more precisely as a finite sequence of steps starting with at least two distinct points in the plane  such that each step is a construction of one of the following two types:  (1) If two distinct points exist at a prior step, then the line they determine or the circle centered at one of them and passing through the other may be constructed.  (2) If two lines, two circles, or a line and a circle exist at a prior step, then their points of intersection, if any, may be constructed.

\emph{In this paper, all further references to constructions, trisections, trisectability, and the like, will assume the Euclidean restriction.}

After many failed efforts at solving the Euclidean angle trisection problem, it became widely believed, even in Euclid's time, that a solution was impossible. However, this belief was not supported by proof.

Further progress on the Euclidean angle  trisection problem was not achieved until the development of trigonometry and algebra by Arab mathematicians.   Around 1430, the Arab mathematician Al-K\=ash\=i showed that the  trisectability of a given angle $\alpha$  is related to the solvability of a certain cubic polynomial \cite{hog}.  Presumably, this is essentially the polynomial $q(x,b)$ described in the following paragraph, which derives from the trigonometric identity  expressing $\sin(\alpha)$ in terms of $\sin(\alpha/3)$.   In 1569, the Italian mathematician Rafael Bombelli independently demonstrated  the same connection between trisectability and algebraic solvability \cite{bor}.  However, with both Al K\=ash\=i and  Bombelli, the concepts of ``trisectability'' and ``solvability'' remain unclear.  Neither mathematician gives an  algebraic criterion for Euclidean constructibility.  Nor does either specify what kind of algebraic solutions  he has in mind.  Finally, neither makes a claim to having proved the impossibility of solving the trisection problem.   Nevertheless, despite these gaps, the connection these mathematicians obtain between the purely geometric problem and an algebraic one was a major breakthrough that foreshadowed the ultimate solution

This solution was finally and decisively achieved by the French mathematician Pierre-Laurent Wantzel (1814-1848)   \cite{wan}, as stated at the outset of this paper. By that time, the connection between trisectability and properties of the cubic
polynomial $q(x,b)=4x^3-3x+b$  must have been well known, because Wantzel uses the polynomial without further comment.  The parameter $b$ represents the sine of a given angle $\alpha$, and one of the roots of $q(x,b)$ is $\sin(\alpha/3)$.   Wantzel demonstrates that constructible quantities must be zeros of irreducible polynomials of degree a power of two, and he  uses this criterion to deduce that   $\alpha$ is trisectable if and only if  $q(x,b)$ is reducible over the field $\Q(b)$.  (To be sure, he does not use the language of fields, since the concept of a field was not fully developed until the 1850's.) Since there are many $b$ in the interval $[-1,1]$ for which $q(x,b)$ is \emph{not} reducible over $\Q(b)$, there are, correspondingly,  many angles for which there is no  Euclidean trisection procedure (cf. \textbf{Corollary 2.2}).

\V

\subsection{ Some terminology, statements of results and organization of the paper}

\V

For technical convenience in our work and in the statements of results, we shall replace $\sin(\alpha)$ by $2\cos(\alpha)$.  This amounts to replacing the polynomial $q(x,b)$ by the polynomial $p(x,a)= x^3-3x-a$,  which we use throughout the rest of this paper. Here the parameter $a$ represents $2\cos(\alpha)$.  Wantzel's argument applies equally well to $p(x,a)$, so we can rephrase his result as:  \emph{$\alpha$ is trisectable if and only if $p(x,a)$ is reducible over the field $\Q(a)$.}\V

Next, we normalize our discussion by using the Cartesian plane $\R^2$, with its usual terminology and notation.
Points in $\R^2$ will be called \emph{constructible} if they can be obtained from the  set of points $\{(0,0), (1,0)\}$ by a Euclidean construction.    More generally, given any set  $S\subseteq \R^2$ that contains  $\{(0,0), (1,0)\}$, any point for which there is a construction starting with points in $S$ is said to be \emph{constructible over $S$}.  The set of all such points will be denoted by $C(S)$.   An angle $\alpha$  is identified in the usual way with the point $(\cos(\alpha), \sin(\alpha))$ on the unit circle $S^1$, which allows us to talk about \emph{constructible angles}.     $\alpha$ is said to be \emph{trisectable} if $\alpha/3\in C(\{(0,0), (1,0), \alpha\})$ \footnote{The notation ``$\alpha/3$'' here represents $1/3$ of $\alpha$ in angular measure and should not be confused with scalar multiplication of the point $\alpha$ by the scalar $1/3$. A similar caveat applies to the examples of angles that we give later. }. Since this last relation bears no obvious connection to the relation $\alpha\in C(\{(0,0), (1,0)\})$,  there is no reason to suppose that a trisectable angle need be constructible. \V

Some examples are in order.  First note that the following angles are  \emph{constructible}: $\alpha=\pi/3;\;\beta_n= \pi/2^n;\; \gamma_n=\pi/3\cdot2^n;\; \epsilon_n= \pi/2^n + \pi/3$. The reasons are: $\alpha$ is the angle in an equilateral triangle; $\beta_n$ can be obtained from $\pi$, which is obviously constructible,  by repeated bisection; $\gamma_n$ can be obtained from $\alpha$ by repeated bisection;   $\epsilon_n=\beta_n+\alpha$.  Here, $n$ is any non-negative integer.\V

Now, $\alpha=\pi/3$ is the most commonly presented example of a \emph{non-trisectable} angle: for  $2\cos(\pi/3)=1$, and $p(x,1)$ is easily shown to be irreducible. Therefore, we may conclude that $\epsilon_n$ is not trisectable: for if it were, then starting with $\epsilon_n$ we could construct $\epsilon_n/3 =\gamma_n+ \alpha/3 $, and from that we could subtract $\gamma_n$, obtaining $\alpha/3.$ Since $\epsilon_n$ is constructible, we could conclude that $\alpha/3$ is constructible, hence, \emph{a fortiori} constructible over $\{(0,0), (1,0), \alpha\}$, which was just shown to be impossible.\V

By the same arguments, we could conclude that all angles of the form $k\beta_n$ are trisectable, $k$ an arbitrary integer,  and all angles of the form $k\beta_n+ \alpha$ are non-trisectable.  Both these sets are countable, dense subsets of the unit circle. Of course, all of these examples are constructible. \V

At this point it becomes convenient to focus our attention away from the angles themselves and toward their cosines.  Consider any number $a\in[-2,2]$.  We call $a$ a \emph{trisection number} if there is a trisectable angle $\alpha$ such that $a=2\cos(\alpha)$, and we denote the set of all trisection numbers by $Tri$.  As we show in Corollary 3.2 (and have already mentioned earlier), $Tri\subseteq\A$, where $\A$ is the set of algebraic numbers. For any real algebraic number field $K$, we define the \emph{density} of $Tri\cap K$ in $[-2,2]\cap K$ in \S 4, and we denote it by $\delta_K(Tri)$.

\V

\begin{con}\label{C:mainconjecture} $\delta_K(Tri)=0.$
\end{con}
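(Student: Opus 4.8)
\medskip

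\noindent\textbf{A plan toward Conjecture \ref{C:mainconjecture}.} The plan is to turn the statement into a lattice-point count. By the reformulation of Wantzel's theorem in \S 1.2 together with Corollary 3.2, a number $a\in[-2,2]\cap K$ lies in $Tri$ exactly when the cubic $x^3-3x-a$ is reducible over $\Q(a)\subseteq K$, i.e.\ has a root $x\in\Q(a)$; and then $a=x^3-3x$ forces $\Q(x)=\Q(a)$. So, writing $\Phi(x)=x^3-3x$, we have $Tri\cap K=\Phi(K)$; the map $\Phi$ is at most $3$-to-$1$; and since $t\mapsto t^3-3t$ sends $[-2,2]$ onto $[-2,2]$ with every real preimage of a point of $[-2,2]$ again in $[-2,2]$, the elements of $Tri\cap[-2,2]$ of height $\le N$ are exactly the $\Phi$-images of those $x\in K\cap[-2,2]$ with $H(\Phi(x))\le N$, together with the $a$ for which $\Q(a)$ is a proper subfield of $K$. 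This last set lies in the finite union of the proper subfields of $K$, each a proper $\Q$-subspace, so it contributes only $O(N^{d})=o(N^{d+1})$ elements of height $\le N$, where $d=[K:\Q]$; it may be discarded.

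Next I would prove a height inequality for $\Phi$. Fix a $\Z$-basis $\omega_1,\dots,\omega_d$ of $\m{O}_K$ and represent each element of $K$ by its primitive integral coordinate vector $(p_0;p_1,\dots,p_d)$, with height the sup norm of this vector; this should be the normalization underlying the density $\delta_K$ of \S 4. If $x$ has coordinate vector $p$ and $y=\sum_{i\ge1}p_i\omega_i\in\m{O}_K$, then $\Phi(x)=(y^3-3p_0^2y)/p_0^3$, so the coordinate vector of this numerator together with $p_0^3$ equals $F(p)$ for a fixed homogeneous degree-$3$ map $F\colon\R^{d+1}\to\R^{d+1}$. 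Since $F(p)=0$ forces $p_0=0$ and hence $y^3=0$, i.e.\ $y=0$, we have $F^{-1}(0)=\{0\}$, and therefore $\|F(p)\|\asymp_K\|p\|^3$ by homogeneity and compactness of the unit sphere. The primitive representative of $\Phi(x)$ is $F(p)/g(p)$, where $g(p)$ is the integer content of $F(p)$, so $H(\Phi(x))=\|F(p)\|/g(p)$.

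The crux is to bound $g(p)$ by a constant depending only on $K$. If a prime $\ell$ divides $g(p)$ then $\ell\mid p_0$, so $3p_0^2y\equiv0$ and hence $y^3\equiv0\pmod{\ell\m{O}_K}$. For $\ell$ unramified in $K$ the ring $\m{O}_K/\ell\m{O}_K$ is a product of fields, so $y\equiv0$, forcing $\ell$ to divide every $p_i$ and contradicting primitivity; thus only the finitely many ramified $\ell$ can occur. For such an $\ell$, primitivity gives a prime $\mathfrak{p}\mid\ell$ with $v_{\mathfrak{p}}(y)<e_{\mathfrak{p}}$, and a short valuation computation — at $\mathfrak{p}$ the $y^3$ term dominates $3p_0^2y$ — yields $v_\ell(g(p))\le2$, so $g(p)\le\prod_{\ell\mid\Delta_K}\ell^2=O_K(1)$. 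Combining this with the previous paragraph gives $H(\Phi(x))\asymp_K H(x)^3$, so a trisectable $a\in[-2,2]\cap K$ of height $\le N$ has a $\Phi$-preimage $x\in K\cap[-2,2]$ of height $\ll_K N^{1/3}$, and hence $\#\{a\in Tri\cap K\cap[-2,2]:H(a)\le N\}=O_K(N^{(d+1)/3})$. For the denominator, $\#\{a\in K\cap[-2,2]:H(a)\le N\}$ is the number of primitive integral $(d+1)$-tuples in a region whose total number of integral points is $\asymp_K N^{d+1}$; by the generalization of Lehmer's theorem on relatively prime $n$-tuples (Theorem 5.2) the proportion of these integral points that are primitive tends to $1/\zeta(d+1)>0$, so the denominator is $\asymp_K N^{d+1}$ as well. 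Since $(d+1)/3<d+1$, the ratio tends to $0$, i.e.\ $\delta_K(Tri)=0$.

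The step I expect to be the real obstacle — and the reason the result is established only for $d\le2$ in Theorem 4.1, with the general statement left as the conjecture — is keeping both the ramified-prime analysis of the content $g(p)$ and the count of primitive tuples inside the slab $|\sum p_i\omega_i|\le 2p_0$ simultaneously explicit and elementary for all $d$. For $d\le2$ one has $\m{O}_K$, its ramified primes, and $\Phi$ in coordinates all in hand, so the content bound and the Lehmer-type count go through by direct computation; for larger $d$ the local bookkeeping at the ramified primes and the tuple count in the slab become markedly more delicate to control uniformly, and a complete elementary proof would have to confront that difficulty there.
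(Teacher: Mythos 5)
Your overall architecture matches the paper's: bound the numerator by passing from $Tri\cap K$ to the image of $f(x)=x^3-3x$ (the paper's lemma in \S 3.4 gives exactly $\T\cap K\subseteq f(K\cap[-2,2])$, and only this inclusion is needed), prove a height inequality $h_K(f(\alpha))\gg_K h_K(\alpha)^3$ so that trisection numbers of height $\le R$ have preimages of height $O(R^{1/3})$, and lower-bound the denominator via the generalized Lehmer count. Where you genuinely diverge is the height inequality, which is precisely the step the paper can carry out only for $[K:\Q]\le 2$ (\S 7): there the g.c.d.\ $G$ of the coordinates of $f(\alpha)$ is bounded by an explicit case analysis ($G\mid 8d$ for $\Q(\sqrt d)$, Lemma 7.1), and the archimedean bound comes from explicit estimates on the cubic $\Phi_{D,E}$ (Lemma 7.2). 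You replace the former by a local argument (only ramified primes divide the content, each to exponent $\le 2$, because $\mathcal{O}_K/\ell\mathcal{O}_K$ is reduced for unramified $\ell$) and the latter by homogeneity plus compactness of the unit sphere. Both steps check out as far as I can verify, and neither uses $[K:\Q]\le 2$; so your plan, written out carefully, would yield the conjecture for every real number field, which is more than the paper establishes. Your closing diagnosis is therefore off target: the Lehmer-type count and the inscribed box $Q(R)$ of \S 6 already work in every degree, and the paper's only degree restriction comes from \S 7 --- exactly the step your argument generalizes.

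Three points need repair before this is a proof. (i) The asserted equality $Tri\cap K=\Phi(K)$ is false: $a=\Phi(x)$ with $x\in K$ need not force $x\in\Q(a)$. For instance, with $x=2\cos(\pi/9)$ and $K=\Q(x)$ cubic, $\Phi(x)=1\in\Phi(K)$ but $1\notin Tri$. Only $Tri\cap K\subseteq\Phi(K\cap[-2,2])$ holds, which is all the upper bound requires, so the subfield discussion should be discarded rather than leaned on. (ii) In the compactness step, ``$y^3=0$, i.e.\ $y=0$'' cannot be read in $\R$: for $d\ge 2$ the basis elements $\omega_i$ are $\R$-linearly dependent, so the real number $\sum p_i\omega_i$ determines neither $(p_1,\dots,p_d)$ nor the formal coordinate vector of its cube. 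The correct justification is that $F$ extends to the cube map on the algebra $K\otimes_{\Q}\R\cong\R^{r_1}\times\C^{r_2}$, which is a product of fields and hence has no nonzero nilpotents; with that said, $F^{-1}(0)=\{0\}$ over $\R$ and the bound $\|F(p)\|\gg\|p\|^3$ follows. (iii) Theorem 5.2 counts primitive tuples only in boxes, while $[-2,2]\cap B_K(N)$ is a slab; to get the lower bound $\gg N^{d+1}$ for the denominator you must inscribe a box inside the slab, exactly as the paper does with $Q(R)$ in \S 6.2.
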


\begin{theorem}[Main result] Conjecture \ref{C:mainconjecture} is true when $K$ is the field of rational numbers \Q\ or a real quadratic field.
\end{theorem}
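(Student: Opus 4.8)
The plan is to reduce the statement to a counting problem about cubic polynomials $p(x,a) = x^3 - 3x - a$ and then to estimate how often such polynomials are reducible as $a$ ranges over the relevant algebraic numbers of bounded ``height.'' By Wantzel's criterion (as restated for $p(x,a)$), an angle $\alpha$ with $a = 2\cos\alpha$ is trisectable exactly when $p(x,a)$ is reducible over $\Q(a)$. Since $p$ is a monic cubic, reducibility over $\Q(a)$ is equivalent to $p(x,a)$ having a root in $\Q(a)$; and if $a \in K$ for a number field $K$, this is equivalent to $p(x,a)$ having a root in $K$ itself. So $Tri \cap K$ is precisely the set of $a \in [-2,2] \cap K$ for which there exists $t \in K$ with $t^3 - 3t - a = 0$, i.e. the image under $t \mapsto t^3 - 3t$ of the set $\{t : t^3 - 3t \in [-2,2]\} \cap K$. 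Thus $\delta_K(Tri) = 0$ will follow once we show that this image set is ``sparse'' with respect to whatever notion of density (computational/naive height density) is fixed in \S 4.

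First I would set up the case $K = \Q$. Writing $a = m/q$ in lowest terms with $q > 0$, a rational root $t$ of $p(x,a)$ must, by the rational root theorem applied after clearing denominators, be of a constrained form; more precisely $t$ is rational and $t(t^2-3) = a$, and controlling the denominator of $t$ in terms of the denominator of $a$ shows that for $a$ with height (max of $|m|, q$) at most $N$, the number of $a \in [-2,2]$ that are trisection numbers is $O(N)$, whereas the total number of rationals in $[-2,2]$ of height $\le N$ is of order $N^2$ (or $\sim cN^2$ up to the $\zeta(2)$ constant coming from the coprimality condition). The coprimality count is exactly where Lehmer's theorem on the density of relatively prime tuples — generalized in \S 5 (Theorem 5.2) — gets used: one needs the asymptotic density of coprime pairs $(m,q)$, and more generally coprime tuples, to compute the denominator of the ambient set precisely. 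The ratio $O(N)/(cN^2) \to 0$ gives $\delta_\Q(Tri) = 0$.

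Next I would handle a real quadratic field $K = \Q(\sqrt{d})$. The ring of integers $\mathcal{O}_K$ has a $\Z$-basis, so elements of $K$ are coordinatized by pairs (or, after clearing a common denominator, by triples) of integers, and a natural height is the max of the absolute values of these integer coordinates. An element $a \in K$ is a trisection number iff $t^3 - 3t = a$ has a solution $t \in K$; parametrizing $t$ by its integer coordinates and expanding $t^3 - 3t$ in the basis, the map from the coordinates of $t$ to the coordinates of $a$ is given by fixed polynomial (cubic) formulas. The set of $t$ with $t^3-3t \in [-2,2]$, intersected with $K$, when restricted to coordinate-height $\le N$, has size $O(N^2)$ (two free integer parameters, or three with one denominator), and its image — the set of trisection numbers of bounded height — therefore also has size $O(N^2)$ up to adjusting the height bound by a constant factor (since $t \mapsto t^3-3t$ is Lipschitz on the bounded region and changes heights by at most a polynomial factor). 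Meanwhile the ambient set $[-2,2] \cap K$ with coordinate-height $\le N$ has size of order $N^3$ (three integer coordinates, one of which is a denominator, with a coprimality constraint handled again by the generalized Lehmer count). So once more the ratio tends to $0$.

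The main obstacle I anticipate is bookkeeping the height comparisons carefully: one must check that the notion of density defined in \S 4 (``computational density,'' presumably counting by some height function on $K$) is robust enough that (i) the image of a set of size $O(N^2)$ under the cubic map $t\mapsto t^3-3t$ still has size $O(N^2)$ after passing to a possibly rescaled height bound $c'N$, and (ii) the denominator count for $[-2,2]\cap K$ of bounded height really is of the expected polynomial order $N^{[K:\Q]+1}$ and not smaller — this is where the coprimality/Lehmer input is essential, since without it one only gets an upper bound on the numerator and a trivial upper bound on the denominator, which would not separate. A secondary technical point is ensuring that the finitely many ``exceptional'' roots $t$ landing outside the region, or the boundary effects near $\pm 2$, contribute negligibly; these are lower-order and should be absorbed into the $O$-estimates. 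Modulo this careful accounting, the argument is a clean ``numerator grows like $N^{[K:\Q]}$, denominator like $N^{[K:\Q]+1}$'' comparison, which is exactly why the degree $\le 2$ restriction is natural: in higher degree the same heuristic predicts the numerator is $O(N^{[K:\Q]})$ against a denominator of $N^{[K:\Q]+1}$, but making the image count rigorous requires controlling how the cubic map distorts the multi-dimensional height, which is presumably why the general case remains a conjecture.
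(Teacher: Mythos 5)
Your overall architecture matches the paper's: bound the numerator by the image of $f(t)=t^3-3t$, and get the denominator's order of growth $\Theta(N^{k+1})$ from a Lehmer-type coprime-tuple count. The rational case is essentially right (and there the correct count is even better than your $O(N)$: coprimality of $u^3-3uv^2$ and $v^3$ forces $v\leq N^{1/3}$ and $|u|=O(N^{1/3})$, so the numerator is $O(N^{2/3})$). But there is a genuine gap in the quadratic case, in the key counting step. The set $\{t\in K: t^3-3t\in[-2,2]\}$ of coordinate-height $\leq N$ is \emph{not} of size $O(N^2)$: with three integer coordinates $(a_1,a_2,b)$ it has size $\Theta(N^3)$ (the real constraint $t\in[-2,2]$ only costs a constant factor, as the paper's lower-bound box $Q(R)$ shows). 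That is the same order as your denominator, so the comparison as you set it up gives $O(1)$, not $o(1)$.

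The mechanism that actually makes the numerator small is not that the domain of $f$ is sparse but that $f$ roughly \emph{cubes} heights: one must show that if $h_K(f(t))\leq R$ then $h_K(t)=O(R^{1/3})$, so the relevant preimages lie in a box of size $O(R^{(k+1)/3})$. Your ``Lipschitz / changes heights by at most a polynomial factor'' remark points in the wrong direction — an upper bound on $h_K(f(t))$ in terms of $h_K(t)$ is useless here; you need a \emph{lower} bound $h_K(f(t))\gg h_K(t)^3$. For $K=\Q(\sqrt d)$ this is nontrivial precisely because expanding $f\bigl((a_1+a_2\sqrt d)/b\bigr)=(A_1+A_2\sqrt d)/B$ in coordinates may produce a common factor $G=\gcd(A_1,A_2,B)$, which would deflate the height. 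The paper's proof hinges on showing $G\mid 8d$ (so the deflation is bounded) and then on an elementary estimate for the cubic $x^3-3E^2x$ to convert $\max(|A_1|,|A_2|,B)\leq 8dR$ into $\max(|a_1|,|a_2|,b)\leq 2(8dR)^{1/3}$. Neither ingredient appears in your sketch, and without them the density-zero conclusion for quadratic fields does not follow.
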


\V

Now we present a selection of other results in the paper which may be of independent interest.

\vspace{.1in}

The following three results in \S\S 3.2, 3.3 give further examples of non-trisectable angles :

\vspace{.1in}

\noindent\textbf{Corollary 3.2}\emph{ $\alpha$ is non-trisectable  whenever $\cos(\alpha)$ is transcendental. Therefore, \emph{Tri} is countable and the set of non-trisectable angles is uncountable.}

\vspace{.2in}

\noindent\textbf{Proposition 3.2} \emph{If $a$ is a non-zero  square in \Q, then $a\not\in \T$.}

\vspace{.1in}

\noindent\textbf{Proposition 3.3} \emph{ Let $r$ and $s$ be any non-zero integers prime to each other and to $3$. Then $p(x,3r/s)$ is irreducible over \Q.  Hence,  $3r/s\not\in \T$ .}

\vspace{.1in}

Lest the reader be left wondering whether there are any non-trisectable angles with irrational algebraic cosines, we prove the following result in Appendix C  about the non-trisectable angles $\pi/3 + \pi/2^n$:

\vspace{.1in}

\noindent\textbf{Theorem}\quad The  numbers $\cos(\pi/3 + \pi/2^n)$ are algebraic of degree $2^n$.

\vspace{.1in}

\noindent\textbf{Proposition 3.4}, together with its addendum, implies that \emph{there exist a countable number of trisectable angles that are not constructible}.

\vspace{.1in}

\noindent\textbf{Theorem 4.1}\quad \emph{ Let $K$ be a real number field of degree $k\leq 2$.  Then}
\[ \delta_K(R)\quad is\quad \mathcal{O}(R^{-\frac{2}{3}(k+1)}).\]

\vspace{.1in}

Here $\delta_K(R)$ counts the number of trisection numbers in $K$ that have height $\leq R$ and divides this by the number of members of $[-2,2]\cap K$ of height $\leq R$.  See \S 4
for the definition of height and further notation.  Using the usual definition of the big ``$\mathcal{O}\,$'' notation, it is clear that Theorem 4.1 implies Theorem 1.1.

\vspace{.1in}

In \S\ 5,  we estimate the number of relatively prime positive integer $k$-tuples $(a_1,\ldots,a_k)$ satisfying $a_i\leq n_i$, where $\mathbf{n}=(n_1,\ldots,n_k)$ is a $k$-tuple of positive real numbers
(\textbf{Theorem 5.2)}.\quad This generalizes a theorem of Lehmer and Sittinger \cite{sit}. A consequence of this theorem is an asymptotic relation, which is easier to state than the theorem itself:
\[|C(k,\mathbf{n})| \sim \frac{n_1\cdot\ldots\cdot n_k}{\zeta(k)}.\]
The term on the left denotes the number of integer $k$-tuples $(a_1,\ldots,a_k)$ being counted and $\zeta$ denotes the Riemann zeta function.  See \S\ 5 for further definitions.

\vspace{.1in}

Finally, in Appendix A, we prove some results about $n$-section of angles, by which we mean a Euclidean construction that starts with an angle $\alpha$ and produces the angle $\alpha/n$.

\vspace{.1in}

\noindent\textbf{Theorem A.1}\quad \emph{Suppose $n$ is a positive integer such that for any given angle $\alpha$,  there is a Euclidean  construction that starts with  $\alpha$ and ends with $\alpha/n$.  Then, $n$ has the form $2^k$, for some non-negative integer $k$.}\vspace{.1in}

Therefore, for any $n$ not a power of $2$, there exist non-$n$-sectable angles.  But can we assert, as in the case of non-trisectable angles, that, for fixed $n$ (not a power of $2$), the set of all non-$n$-sectable angles is dense in $S^1$, or similarly for the set of all $n$-sectable angles? \vspace{.1in}

 In general, this may not be so easy.  However, we can assert this to be the case for special $n$: \V

\noindent\textbf{Propositions A.1 and A.2:}\quad \emph{If $n$ is an odd positive integer such that $2\pi/n$ can be constructed (i.e., the regular polygon of $n$ sides can be constructed), then there exists a countable dense subset of $S^1$ consisting of $n$-sectable angles and a countable dense subset of $S^1$ consisting of non-$n$-sectable angles.} \vspace{.1in}

As is well known, Gauss showed that $2\pi/n$ can be constructed provided $\phi(n)$ is a power of $2$.  Here, $\phi$ denotes the Euler phi function.  Among odd integers $n$ for which Gauss's condition holds are the integers $n=3, 5, 17, 257, 65537.$\vspace{.1in}

All of the angles in these two propositions have cosines that are algebraic numbers.  The transcendental case is covered by the following theorem in Appendix A, which extends Corollaries \ref{C:transcendentalimpliesirreducible} and \ref{C:cardinalityoftrisectibleangles} of the main text.

\vspace{.2in}

\noindent\textbf{Theorem A.2}\quad\emph{ Suppose that $cos(\alpha)$ is transcendental and that $n$ is a positive integer that is not a power of $2$.  Then $\alpha$ is not $n$-sectable.  Therefore, the set of all non-$n$-sectable angles is uncountable, and the set of all $n$-sectable angles is countable.}

\vspace{.3in}

The paper has three main parts. First,  Sections 1 - 3 consist of background and several results that produce classes of examples of both trisectable and non-trisectable angles.  Second, Sections 4 - 7 prove our main result on density (the precise version of which is Theorem 4.1).  The  proof involves a variety of counting and estimation arguments, including a generalization of a theorem of Lehmer.  Third, the Appendix contains miscellaneous supplementary results. One section (Appendix B) shows that our main result is independent of choice of basis. Another section (Appendix A)  extends some of the results of Sections 2 and 3 to the case of $n$-section of angles. And,  finally, Appendix C shows that the algebraic number  $\cos(\pi/3+\pi/2^n)$ has degree $2^n$.

\vspace{.2in}

At this point, I wish to thank Ravi Ramakrishna for a number of  helpful conversations and suggestions.  I also wish to thank George Wilson and Michael Nussbaum for their help with the Italian article on Bombelli \cite{bor}.   Michael Nussbaum's assistance was particularly helpful in enabling me to assess  Bombelli's contribution to the angle trisection problem. Finally, I wish to thank Benjamin Kahn and Kay Wagner for some interesting questions.  These are answered in Theorems A.1 and A.2.

\vspace{.2in}

\tableofcontents

\section{Basic facts about constructibility and trisectability}\label{S:preliminaries}

Most of the results described in subsections 2.1 and 2.2 are either well known or easily derivable.  They are presented here as background for the reader.

\subsection{Constructible points and numbers} The basic facts about constructible points and numbers are carefully described in various classical texts (e.g., \cite{wae}, \cite{cou}), so we give only a brief outline here.

Let $S$ be a subset of $\R^2$.  We have already defined the set of points $C(S)$ constructible over $S$ in \S 1.2.  We shall now identify  in the usual way the field \R\ of real numbers with the set of all points in $\R^2$ of the form $(x,0)$.  If $S\subseteq \R$, then we call the elements of $C_{\R}(S)=C(S)\cap\R$ \emph{numbers constructible over S} (or simply \emph{constructible numbers} when $S=\{(1,0), (0,1)\}$).

Because the four elementary operations of arithmetic can be realized by Euclidean constructions, it follows that the sets $C(S)$ are closed under these operations, so these sets are subfields of \R.
Furthermore, it follows directly from definitions  that $C_{\R}(C_{\R}(S))=C_{\R}(S)$.  Thus, we lose no generality by assuming that $S$ itself is already a subfield of \R. The further equality $C_{\R}(S)\times C_{\R}(S)=C(S)$  shows that we lose no information about points constructible over $S$ by focusing on numbers constructible over $S$.

The field $C_{\R}(\Q)$ is called the field of \emph{constructible numbers}, and its subfields are called  \emph{constructible fields}.  It follows from our comments above that if $K$ is a constructible field, then $C_{\R}(\Q)=C_{\R}(K)$.

 Now consider some Euclidean construction over $S$.
 By elementary plane geometry, the coordinates of each newly constructed point are zeros of  polynomial  equations of degree at most two, and  the coefficients in each such equation  are rational functions of the coordinates of points  that have already been constructed over $S$. This description leads immediately to the following  fundamental algebraic fact about constructible numbers:

\begin{theorem}\label{T:fundamentalcriterionforconstructibility}  Let $K$ be a subfield of \R.  A real number $x$ belongs to $C_{\R}(K)$ (i.e., is constructible over $K$) if and only if there is a finite tower of real, quadratic field extensions
\[ K=K_0\subset K_1\subset\ldots\subset K_n,\]
such that $x\in K_n$.
\end{theorem}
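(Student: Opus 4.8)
The plan is to prove the two implications separately, both by straightforward induction, using the "elementary plane geometry" observation stated just before the theorem as the engine.

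**First, the ``if'' direction (tower $\Rightarrow$ constructible).** I would induct on $n$. For $n=0$ there is nothing to prove, since $K_0 = K \subseteq C_{\R}(K)$. For the inductive step, suppose every element of $K_{n-1}$ is constructible over $K$, and let $x \in K_n$. Since $[K_n : K_{n-1}] = 2$, I would pick $\theta$ with $K_n = K_{n-1}(\theta)$ and $\theta^2 = d$ for some $d \in K_{n-1}$ (after completing the square, which is legitimate in characteristic $0$); then $\sqrt{d}$ is constructible over $K_{n-1}$ because the construction of a square root of a given length by straightedge and compass is classical (semicircle on a diameter of length $d+1$, or the geometric-mean construction). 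By the induction hypothesis each element of $K_{n-1}$ is in $C_{\R}(K)$, and then using $C_{\R}(C_{\R}(K)) = C_{\R}(K)$ (noted in \S 2.1) together with the field operations, any $x = u + v\theta$ with $u, v \in K_{n-1}$ lies in $C_{\R}(K)$.

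**Second, the ``only if'' direction (constructible $\Rightarrow$ tower).** Here I would induct on the length of a Euclidean construction witnessing $x \in C_{\R}(K)$. Suppose the construction produces points $P_1, P_2, \ldots, P_m$ in order, starting from points whose coordinates lie in $K$. I claim one can build a tower $K = K_0 \subseteq K_1 \subseteq \cdots$ of real quadratic (or trivial) extensions so that after the $j$-th step, both coordinates of $P_j$ lie in the current field. The inductive step uses exactly the quoted fact: the coordinates of a newly constructed point satisfy a polynomial equation of degree at most two whose coefficients are rational functions of coordinates already available, hence coefficients in the current field $K_{j-1}$. Intersecting two lines keeps us in $K_{j-1}$ (solving a linear system); intersecting a line with a circle, or two circles, forces us to adjoin at most one square root $\sqrt{d}$ of an element $d \in K_{j-1}$ — and crucially $d \geq 0$ because the intersection point is real, so the extension is a \emph{real} quadratic extension (or trivial, if $d$ is already a square in $K_{j-1}$, in which case we just repeat the field). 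After finitely many steps we reach a field $K_n$ containing the coordinates of $x$ viewed as a point on the $x$-axis, hence containing $x$.

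**The main obstacle** is not any single deep idea but making the bookkeeping honest in the ``only if'' direction: one must be careful that the coefficients in the quadratic equations really do lie in the field constructed \emph{so far} (which requires carrying along all previously constructed coordinates, not just $x$), that ``degree at most two'' is handled uniformly with the degenerate linear case (absorbed by allowing a ``quadratic extension'' to be the identity, or by simply not extending at that step), and that the real/positivity condition $d \geq 0$ is invoked to guarantee the extensions are real — this last point is what keeps the tower inside $\R$ and is the only place where geometry (existence of a real intersection point) feeds back into algebra. I would also remark that the standard references \cite{wae}, \cite{cou} carry out essentially this argument, so the proof can be kept brief.
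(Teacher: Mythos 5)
Your proposal is correct and follows exactly the route the paper has in mind: the paper does not actually write out a proof of this theorem, but instead gives the one‑paragraph sketch immediately preceding it (coordinates of newly constructed points satisfy equations of degree at most two with coefficients rational in previously constructed coordinates) and defers to the classical references \cite{wae}, \cite{cou}. Your two inductions, including the attention to the nonnegativity of the discriminant that keeps the tower real, are the standard filling‑in of that sketch, so there is nothing to flag.
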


It follows immediately that $C_{\R}(\Q)$ is a real subfield of the field \A\ of algebraic numbers.

Suppose that the real number $b$ is constructible over the real field $K$ and that $K=K_0\subset\ldots\subset K_n$ is a tower as above with $b\in K_n$.  Then, by Theorem \ref{T:fundamentalcriterionforconstructibility}, the minimal polynomial of $b$ over $K$   must have degree of the form $2^k$ for some $k\leq n$.  When $K=\Q$, this is called the \emph{degree of $b$}.  Therefore, every $b\in C_{\R}(\Q)$ has degree a power of 2.

\subsection{Trisectable angles}

We recall that the angle $\alpha$ is \emph{trisectable} if the angle
$\alpha/3$ is constructible  over the set $ \{(0,0), (1,0),\alpha\}$

We set $a=2\cos(\alpha).$ It is easy to see that $\alpha/3$ is constructible over $ \{(0,0), (1,0),\alpha\}$ if and only if  $\cos(\alpha/3)$ is constructible over the field $\Q(\cos(\alpha))$ or,  equivalently,  $2\cos(\alpha/3)$ is constructible over the field $\Q(2\cos(\alpha))=\Q(a)$. This second formulation is often slightly more convenient for our algebraic computations.  We use either formulation without further comment.

It is possible for the angle $\alpha$ to be constructible without being trisectable (e.g., $\pi/3$, as mentioned in the introduction) and to be trisectable without being constructible.  We give examples of the latter in \S 3.4.

We now invoke a standard trigonometric identity to relate the quantities
$2\cos(\alpha)$ and $2\cos(\alpha/3)$:
\begin{equation}\label{E:basicequation}
 2\cos(\alpha) = (2\cos(\alpha/3))^3-3(2\cos(\alpha/3)).
 \end{equation}
That is, using $a=2\cos(\alpha)$, as above, $2\cos(\alpha/3)$ is a zero of the monic polynomial \[p(x,a)=x^3-3x-a\in \Q(a)[x].\]

\begin{theorem}\label{T:citerionfortrisectability} The angle $\alpha$ is trisectable if and only if $p(x,a)$ is reducible over the field $\Q(a)$, where $a=2\cos(\alpha)$.
\end{theorem}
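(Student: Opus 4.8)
The plan is to connect trisectability directly to the reducibility of $p(x,a)$ over $K=\Q(a)$ via the fundamental criterion for constructibility (Theorem~\ref{T:fundamentalcriterionforconstructibility}). First I would recall that, by the reformulation in \S2.2, $\alpha$ is trisectable if and only if $2\cos(\alpha/3)$ is constructible over the field $K=\Q(a)$, and that equation~\eqref{E:basicequation} exhibits $t:=2\cos(\alpha/3)$ as a root of the monic cubic $p(x,a)=x^3-3x-a\in K[x]$. So the whole statement reduces to a purely field-theoretic fact: \emph{a root $t$ of an irreducible-or-reducible cubic $p(x,a)\in K[x]$ is constructible over $K$ if and only if $p(x,a)$ is reducible over $K$.}

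For the easy direction, suppose $p(x,a)$ is reducible over $K$. Since it is a cubic, it then has a linear factor over $K$, hence a root $r\in K$. The other two roots lie in a field extension of $K$ of degree at most $2$ (they are the roots of the remaining quadratic factor, and one may take a quadratic extension of $K$ containing them). In particular $2\cos(\alpha/3)$, being one of the three roots, lies in some $K$ or in a real quadratic extension $K(\sqrt{d})$ of $K$; either way it sits in a tower of real quadratic extensions of $K$, so by Theorem~\ref{T:fundamentalcriterionforconstructibility} it is constructible over $K$, i.e.\ $\alpha$ is trisectable. (One must be slightly careful that the relevant quadratic extension is \emph{real}: since $t=2\cos(\alpha/3)$ is itself a real number, the minimal polynomial of $t$ over $K$ has all the needed roots inside $\R$, so the tower can be taken inside $\R$.)

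For the converse, suppose $\alpha$ is trisectable, so $t=2\cos(\alpha/3)\in C_{\R}(K)$. By the remark following Theorem~\ref{T:fundamentalcriterionforconstructibility}, the minimal polynomial of $t$ over $K$ has degree a power of $2$. But $t$ is a root of the cubic $p(x,a)\in K[x]$, so its minimal polynomial over $K$ divides $p(x,a)$ and hence has degree $1$, $2$, or $3$; the only power of $2$ among these is $1$ or $2$, so $\deg_K(t)\in\{1,2\}<3$. Therefore $p(x,a)$ cannot be irreducible over $K$ (an irreducible cubic would force $\deg_K(t)=3$), so $p(x,a)$ is reducible over $K$. This completes both directions.

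The main obstacle — really the only subtlety — is ensuring that the field extensions produced are \emph{real} quadratic extensions, as required by Theorem~\ref{T:fundamentalcriterionforconstructibility}; this is handled by observing that $2\cos(\alpha/3)$ is a real root and building the tower inside $\R$. Everything else is the elementary fact that a cubic over a field is reducible precisely when it has a root in the field, combined with the power-of-$2$ degree constraint on constructible numbers.
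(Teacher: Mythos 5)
Your proof is correct and follows essentially the same route as the paper: one direction uses the power-of-two degree constraint from Theorem~\ref{T:fundamentalcriterionforconstructibility} to rule out an irreducible cubic minimal polynomial, and the other uses the factorization of a reducible cubic into a linear and a quadratic factor, with $2\cos(\alpha/3)$ a root of one of them. Your explicit remark that the quadratic extension can be taken real (because $2\cos(\alpha/3)$ is real) is a small point the paper leaves implicit, but otherwise the arguments coincide.
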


As indicated in our introduction, an equivalent fact was demonstrated by Wantzel.  We provide a modern version of his proof here for the reader's convenience.

\begin{proof}
$\Leftarrow$: Assume that $2\cos(\alpha/3)$ is constructible over $\Q(\cos(\alpha))=\Q(a)$.  Then , by Theorem 1,  the minimal polynomial $f$ of $2\cos(\alpha/3)$ over $\Q(a)$ has degree $2^n$ , for some integer $n$. Thus, the degree of $f$ is not equal to $0$ or $3$.  Since $f$ divides $p(x,a)$, $p(x,a)$ is reducible in $\Q(a)[x]$.

$\Rightarrow$:  If $p(x,a)$ is reducible over $\Q(a)$, it factors as the product of a linear term and a quadratic term in $\Q(a)[x]$.  Since $2\cos(\alpha/3)$ is a zero of $p(x,a)$,  by the foregoing trig identity, it must be a zero of one of the factors.  Therefore,  in either case, it is constructible over $\Q(a)$, by Theorem 1,  and so $\alpha$ is trisectable.
\end{proof}

\vspace{.2in}

\begin{kor} $\pi/3$ is not trisectable, which means that it is impossible to find a Euclidean trisection construction for each angle.
\end{kor}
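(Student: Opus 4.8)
The plan is to apply Theorem \ref{T:citerionfortrisectability} to the specific angle $\alpha = \pi/3$. Since $2\cos(\pi/3) = 1$, we take $a = 1$; because $a$ is rational, $\Q(a) = \Q$, and the polynomial to examine is $p(x,1) = x^3 - 3x - 1 \in \Q[x]$. By Theorem \ref{T:citerionfortrisectability}, the angle $\pi/3$ fails to be trisectable exactly when $p(x,1)$ is irreducible over $\Q$, so the entire corollary reduces to a single irreducibility check.

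To carry out that check, I would use the standard fact that a cubic in $\Q[x]$ is reducible over $\Q$ if and only if it has a root in $\Q$ (a nontrivial factorization of a degree-$3$ polynomial must split off a linear factor). Since $p(x,1)$ is monic with integer coefficients, the rational root theorem forces any rational root to be an integer dividing the constant term $-1$, hence $\pm 1$. A direct evaluation gives $p(1,1) = -3$ and $p(-1,1) = 1$, neither of which is zero. Therefore $p(x,1)$ has no rational root, so it is irreducible over $\Q = \Q(a)$, and Theorem \ref{T:citerionfortrisectability} yields that $\pi/3$ is not trisectable.

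For the concluding clause of the statement, I would argue by contradiction: if there existed a single Euclidean construction that trisects \emph{every} angle, then in particular it would trisect $\pi/3$, contradicting what has just been shown. Hence no universal Euclidean trisection procedure exists.

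I do not expect a genuine obstacle here — the substance is entirely supplied by Theorem \ref{T:citerionfortrisectability}, and the remaining work is the elementary rational-root computation. The only points worth spelling out with care are the reduction from ``reducible over $\Q(a)$'' to ``has a rational root'' (legitimate because $\deg p = 3$) and the observation that $\Q(a) = \Q$ when $a \in \Q$, so that the coefficient field is genuinely $\Q$ and the rational root theorem applies verbatim.
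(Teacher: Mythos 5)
Your proposal is correct and follows the same route as the paper: reduce via Theorem \ref{T:citerionfortrisectability} to the irreducibility of $p(x,1)=x^3-3x-1$ over $\Q(1)=\Q$, and then verify irreducibility. Your rational-root computation is exactly the ``direct computational argument'' the paper alludes to before offering Eisenstein's criterion (applied to the shifted polynomial $x^3+3x^2-3$) as an alternative, so there is nothing to add.
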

\begin{proof}   When $\alpha=\pi/3$, we have $2\cos(\alpha)=1$, so that $p(x,a)=x^3-3x-1$.  The reader can easily check by a direct computational argument that this polynomial is irreducible over $\Q(1)=\Q$.  Alternatively, we give an argument that uses Eisenstein's criterion \cite{wae}.  The polynomial $f(x)=
p(x-1,1) = x^3+3x^2-3$  satisfies the conditions of Eisenstein's Theorem.  Therefore, $f(x)$ is irreducible. It follows that $p(x,1)=f(x+1)$ is irreducible.  Now apply the preceding theorem.
\end{proof}

\vspace{.2in}

\section{Examples of trisectable and non-trisectable angles}

\subsection{Multiples of $\pi$}

As we already indicated in the introduction,  every integral multiple of $\pi/2^n$ is \emph{both} trisectable \emph{and} constructible and  these  form a countable, dense subset of $S^1$.  Indeed, it is easy to see that the set of all angles that are both trisectable and constructible form a dense \emph{subgroup} of $S^1$ under angle addition.  It follows from Corollary \ref{C:cardinalityoftrisectibleangles} that this subgroup is countable.  We do not make use of the group structure in this paper.

\vspace{.2in}

R. C. Yates \cite{yat} proves the following easy generalization of the fact that the integer multiples of $\pi/2^n$ are trisectable.

\begin{prop} Suppose that the integer $k$ is not a multiple of $3$.  Then every multiple of $\pi/k$ is trisectable.
\end{prop}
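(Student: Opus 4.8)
The plan is to use Theorem \ref{T:citerionfortrisectability}: a multiple $m\pi/k$ of $\pi/k$ is trisectable if and only if $p(x,a)=x^3-3x-a$ is reducible over $\Q(a)$, where $a=2\cos(m\pi/k)$. So it suffices to show that whenever $3\nmid k$, the polynomial $p(x,a)$ has a root in $\Q(a)$ for every angle of the form $\alpha = m\pi/k$. The natural candidate for the root is $2\cos(\alpha/3)$ itself, or some closely related value; the point is that when $3\nmid k$, the angle $\alpha/3 = m\pi/(3k)$ can be rewritten so that its cosine lands back in $\Q(\cos(\alpha))$.

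First I would exploit the arithmetic hypothesis $3\nmid k$. Since $\gcd(3,k)=1$, there exist integers $u,v$ with $3u + kv = 1$, hence $m = 3um + kvm$, so
\[
\frac{m\pi}{3k} = \frac{um\pi}{k} + \frac{vm\pi}{3}.
\]
Thus $\alpha/3$ differs from the multiple $um\pi/k$ of $\pi/k$ by a multiple of $\pi/3$. Now $\cos(um\pi/k)$ lies in $\Q(\cos(\pi/k))$ — indeed $\cos(j\theta)$ is a polynomial with integer coefficients in $\cos\theta$ (Chebyshev) — and more to the point it lies in $\Q(\cos(m\pi/k)) = \Q(a/2) = \Q(a)$, because $\cos(m\pi/k)$ and $\cos(um\pi/k)$ generate the same field (both are Galois conjugates / powers within $\Q(\zeta_{2k})^+$, and $u$ is invertible mod the relevant modulus since $\gcd(u,\ldots)$ can be arranged). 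Meanwhile $\cos(vm\pi/3)$ and $\sin(vm\pi/3)$ take values in $\{0,\pm 1, \pm 1/2, \pm\sqrt3/2\}$. Applying the cosine addition formula,
\[
\cos(\alpha/3) = \cos\!\Big(\frac{um\pi}{k}\Big)\cos\!\Big(\frac{vm\pi}{3}\Big) - \sin\!\Big(\frac{um\pi}{k}\Big)\sin\!\Big(\frac{vm\pi}{3}\Big),
\]
and the only possible obstruction to this lying in $\Q(a)$ is the $\sqrt3$ that appears when $vm$ is not a multiple of $3$ — but that is exactly the case one must handle, and here $\sin(um\pi/k)$ appears, which is generally not in $\Q(a)$. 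So the clean statement is: after reducing, one shows $2\cos(\alpha/3)\in\Q(a)$ in the cases where the $\pi/3$-part contributes no $\sqrt3$, and otherwise one picks a different root of $p(x,a)$ — namely $2\cos(\alpha/3 + 2\pi/3)$ or $2\cos(\alpha/3 + 4\pi/3)$, and checks that at least one of the three roots avoids the $\sqrt 3$. Since the three roots of $p(x,a)$ are precisely $2\cos(\alpha/3)$, $2\cos(\alpha/3 + 2\pi/3)$, $2\cos(\alpha/3 + 4\pi/3)$, and shifting $\alpha/3$ by $2\pi/3$ or $4\pi/3$ changes the ``$\pi/3$-part'' above, a short case analysis mod $3$ on the integer $m$ shows one of these shifts produces a cosine with no surviving $\sqrt3$, hence lying in $\Q(a)$.

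The main obstacle — really the only subtle point — is verifying that $\cos(um\pi/k)$ (and the combination appearing after the shift) genuinely lies in $\Q(a) = \Q(\cos(m\pi/k))$ rather than merely in the larger field $\Q(\cos(\pi/k))$. I would handle this by a direct Chebyshev-polynomial argument: writing $\beta = m\pi/k$, we have $a = 2\cos\beta$, and every value $2\cos(j\beta)$ for $j\in\Z$ is an integer polynomial in $a$ (this is the standard recursion $2\cos((j+1)\beta) = a\cdot 2\cos(j\beta) - 2\cos((j-1)\beta)$), so $2\cos(um\pi/k) = 2\cos(u\beta)\in\Z[a]\subseteq\Q(a)$ automatically — no Galois theory needed. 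Then the reduction above goes through once one checks the finitely many residue cases of $m$ and $v$ modulo appropriate small integers, which is the routine calculation I will not carry out here. With $2\cos(\alpha/3+2j\pi/3)\in\Q(a)$ established for a suitable $j\in\{0,1,2\}$, that value is a root of $p(x,a)$ in $\Q(a)$, so $p(x,a)$ is reducible over $\Q(a)$, and Theorem \ref{T:citerionfortrisectability} gives trisectability of $\alpha = m\pi/k$.
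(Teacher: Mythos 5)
Your argument is correct, but it is a genuinely different (and considerably heavier) route than the paper's. The paper's proof is purely geometric: from $3u+kv=1$ one gets $u(m\pi/k)+v(m\pi/3)=m\pi/(3k)$, and since $\pi/3$ is constructible and $m\pi/k$ is given, the angle $m\pi/(3k)$ is obtained directly by adding and subtracting copies of these two angles --- no appeal to the reducibility criterion is needed at all. You instead verify the algebraic criterion of Theorem \ref{T:citerionfortrisectability} by exhibiting a root of $p(x,a)$ in $\Q(a)$: writing $\alpha/3+2\pi j/3 = u\alpha + (vm+2j)\pi/3$ and choosing $j$ with $vm+2j\equiv 0 \pmod 3$ (always possible since $2$ is invertible mod $3$), the chosen root equals $\pm 2\cos(u\alpha)$, which lies in $\Z[a]$ by the Chebyshev recursion $2\cos((j{+}1)\beta)=a\cdot 2\cos(j\beta)-2\cos((j{-}1)\beta)$. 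That is sound, and the ``routine'' mod-$3$ check you defer really is routine. Two small remarks: the mid-proof appeal to Galois conjugates and to $\cos(um\pi/k)$ \emph{generating} the same field as $\cos(m\pi/k)$ is both shaky and unnecessary --- only the containment $2\cos(u\alpha)\in\Z[a]$ matters, and your own Chebyshev argument already gives it; and you should say explicitly that a cubic with a root in $\Q(a)$ factors over $\Q(a)$, which is what ``reducible'' requires. What your approach buys that the paper's does not is an explicit linear factor of $p(x,a)$ over $\Q(a)$, namely $x\mp P_{|u|}(a)$ with $P_n$ the integer polynomial satisfying $2\cos(n\theta)=P_n(2\cos\theta)$; what it costs is brevity, since the paper's one-line construction avoids the polynomial entirely.
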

\begin{proof}  Since $k$ is relatively prime to $3$, there are integers $a$ and $b$ such that $3a+bk=1$.
Multiply both sides of this equation by $\pi/3k$: $a(\pi/k)+b\pi/3= \pi/3k$.  Since $\pi/k$ is given and $\pi/3$ is constructible, it follows that $\pi/3k$ can be constructed from $\pi/k$.\footnote{Yates gives a faulty proof of the inverse of this proposition.  He succeeds only in proving that if $k$ is a multiple of $3$ \emph{and} if  $\pi/k$ is constructible, then it is not trisectable} \end{proof}

Suppose now that $\alpha$ is a constructible angle that is trisectable and $\beta$ is a non-trisectable angle.  Then $\alpha+\beta$ cannot be trisectable.  For if it were, then, starting with $\beta$, we could construct $\alpha +\beta$, hence $(\alpha+\beta)/3$, and from that, $\beta/3$, contradicting the non-trisectability of $\beta$.

Since we have shown above  that there is a countable dense subset of $S^1$ consisting of angles that are constructible and trisectable, and since we have seen that non-trisectable angles exist, this shows that \emph{the set of non-trisectable angles contains a countable subset that is dense in $S^1$}.  Shortly, we demonstrate a much stronger result.

\subsection{Trisection numbers and the countability of the set of trisectable angles}

We give further examples of  non-trisectable angles.  First, we restate the definition of a \emph{trisection number} for emphasis.

\begin{defin} A \emph{trisection number} is any real number of the form $2\cos(\alpha)$, for some trisectable angle $\alpha$.
We denote the set of all trisection numbers $\emph{Tri}$.
\end{defin}

\vspace{.1in}

We begin with the following lemma.

\begin{lemma} \label{L:indeterminateparameter} Let $t$ be an indeterminate. The polynomial $p(x,t)\in\Q(t)[x]$
is irreducible.
\end{lemma}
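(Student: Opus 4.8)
The plan is to show that $p(x,t)=x^3-3x-t$ is irreducible over $\Q(t)$ by showing it has no root in $\Q(t)$; since $p$ has degree $3$, irreducibility over a field is equivalent to having no root in that field. So suppose, for contradiction, that $f(t)/g(t)\in\Q(t)$ is a root, with $f,g\in\Q[t]$ coprime and $g$ monic (or at least $\gcd(f,g)=1$). Substituting and clearing denominators gives
\[
f(t)^3 - 3 f(t) g(t)^2 - t\, g(t)^3 = 0
\]
in $\Q[t]$. The key observation is a divisibility argument analogous to the rational root test: from this identity, $g(t)^3$ divides $f(t)^3$, and since $\gcd(f,g)=1$ we get $g(t)^3\mid f(t)^3$ forces $g$ to be a unit, i.e.\ the root lies in $\Q[t]$ itself. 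Likewise, rearranging to isolate $t\,g(t)^3 = f(t)^3 - 3f(t)g(t)^2 = f(t)\bigl(f(t)^2 - 3g(t)^2\bigr)$ and, having reduced to $g=1$, we get $t = f(t)^3 - 3f(t)$; comparing degrees, $\deg f \geq 1$ forces the right side to have degree $3\deg f \geq 3 > 1$, a contradiction, while $\deg f = 0$ makes the right side a constant, also contradicting equality with $t$.

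Carrying this out in order: first reduce to the no-root criterion for cubics; second, write a hypothetical root in lowest terms and clear denominators to land in $\Q[t]$; third, run the coprimality/divisibility step to force the denominator to be a unit (the polynomial Gauss-lemma style argument: $\Q[t]$ is a UFD, and a root in the fraction field of a UFD that is integral over... — actually simpler: just the coprimality bookkeeping above suffices); fourth, do the degree count on $t = f(t)^3 - 3f(t)$ to finish.

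Alternatively — and this is probably the cleanest route — I would invoke the rational root theorem directly over the UFD $\Q[t]$: $p(x,t)$ is monic in $x$ with coefficients in $\Q[t]$, so any root in $\Q(t)$ is in fact in $\Q[t]$; but then that root $r(t)$ satisfies $r(t)^3 - 3 r(t) = t$, and degree considerations (the left side has degree divisible by... has degree $3\deg r$ if $\deg r\geq 1$, degree $0$ if $\deg r = 0$) rule this out. A third route, perhaps slickest of all: specialize $t$ to a well-chosen rational value to reduce to Corollary~\ref{T:citerionfortrisectability}'s setting — e.g.\ $t=1$ gives $x^3-3x-1$, shown irreducible over $\Q$ in Corollary~2.2 — and observe that if $p(x,t)$ factored nontrivially over $\Q(t)$, then (after clearing denominators and applying Gauss's lemma to work over $\Q[t]$) we could specialize $t\mapsto 1$ to get a nontrivial factorization of $x^3-3x-1$ over $\Q$, a contradiction. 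One must be slightly careful that specialization does not collapse a genuine factorization into a trivial one (a factor of positive $x$-degree could specialize to have the wrong degree only if its leading coefficient in $x$ vanishes at $t=1$, which cannot happen since $p$ is monic in $x$, so each factor can be taken monic in $x$ and degrees are preserved).

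I do not expect any serious obstacle here; the only thing requiring a moment's care is the bookkeeping in whichever argument is chosen — in the direct approach, making sure the "lowest terms" reduction and the Gauss-lemma/UFD step are stated correctly so that a root in $\Q(t)$ really does force a root in $\Q[t]$; in the specialization approach, checking that monic-in-$x$ factors have their $x$-degrees preserved under $t\mapsto 1$. I would present the rational-root-over-$\Q[t]$ version as the main line, since it is short and self-contained, with the degree count $t = r(t)^3 - 3r(t)$ delivering the contradiction immediately.
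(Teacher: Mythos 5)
Your proposal is correct and follows essentially the same route as the paper: write a putative root of $p(x,t)$ in lowest terms $A/B$ with $A,B\in\Q[t]$ coprime, clear denominators, use coprimality in the UFD $\Q[t]$ to force $B$ to be a unit, and then derive a contradiction from $t=A^3-3A$ (the paper concludes via $A\mid t$ where you use a degree count — a negligible difference). Your specialization alternative also coincides with the Eisenstein-based alternative proof the paper points to in Appendix A.
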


\begin{proof}  Suppose the conclusion is false.  Then $p(x,t)$ has a zero in $\Q(t)$, which we may write as $A/B$, where $A$ and $B$ are relatively prime polynomials in $\Q[t]$.  The equation $p(A/B,t)=0$ implies that $A(A^2-3B^2)=tB^3$,
and this, in turn implies that $B|A^2$, which is impossible unless $B$ is a unit in $\Q[t]$, i.e., a non-zero constant polynomial, which we may absorb in $A$.  Therefore, $p(A,t)=0$, which implies $A\neq 0$ and $A|t$.  So, either there is a non-zero rational number $c$ such that  $A=c$ or there is a non-zero $c$ such that $A=ct$.  Either choice gives a non-trivial algebraic relation satisfied by $t$ over \Q\,, $p(A,t)=0$, which is impossible.
\end{proof}

See the proof of Theorem A.2 in Appendix $A$ for an alternative proof of this lemma that uses Eisenstein's criterion.

\vspace{.05in}

Since $\Q(a)\equiv \Q(t)$ when $a$ is transcendental, the following corollary is immediate:

\begin{cor}\label{C:transcendentalimpliesirreducible} $p(x,a)$ is irreducible over $\Q(a)$, for every transcendental $a$.
\end{cor}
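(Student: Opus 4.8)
The statement to prove is Corollary~\ref{C:transcendentalimpliesirreducible}: $p(x,a)$ is irreducible over $\Q(a)$ for every transcendental $a$. The plan is to reduce this to Lemma~\ref{L:indeterminateparameter}, which has already established that $p(x,t)$ is irreducible over $\Q(t)$ when $t$ is an indeterminate. The key observation is that if $a$ is transcendental over $\Q$, then there is a $\Q$-algebra isomorphism $\Q(t)\to\Q(a)$ sending the indeterminate $t$ to $a$; this is precisely the statement $\Q(a)\equiv\Q(t)$ invoked just before the corollary. Such an isomorphism extends to an isomorphism of polynomial rings $\Q(t)[x]\to\Q(a)[x]$ fixing $x$, and this extended isomorphism carries $p(x,t)=x^3-3x-t$ to $p(x,a)=x^3-3x-a$.

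First I would recall that irreducibility is preserved under ring isomorphism: if $\varphi\colon R\to R'$ is an isomorphism of (commutative, unital) rings and $f\in R$ is irreducible, then $\varphi(f)$ is irreducible in $R'$. Applying this with $R=\Q(t)[x]$, $R'=\Q(a)[x]$, $\varphi$ the extension of the isomorphism $\Q(t)\cong\Q(a)$ fixing $x$, and $f=p(x,t)$, which is irreducible by Lemma~\ref{L:indeterminateparameter}, we conclude that $p(x,a)=\varphi(p(x,t))$ is irreducible over $\Q(a)$. That is the whole argument.

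I do not expect any serious obstacle here; the content is entirely in Lemma~\ref{L:indeterminateparameter}, and the corollary is a formal consequence. The only point that needs a word of care is the existence of the isomorphism $\Q(t)\cong\Q(a)$: this is the standard fact that a transcendental element generates a field isomorphic to the rational function field, which follows because the evaluation homomorphism $\Q[t]\to\Q[a]$, $t\mapsto a$, has trivial kernel exactly when $a$ satisfies no nonzero polynomial over $\Q$, i.e., exactly when $a$ is transcendental, and one then passes to fields of fractions. Since the paper treats this as immediate (it writes ``the following corollary is immediate''), a one-line proof referencing Lemma~\ref{L:indeterminateparameter} and this isomorphism suffices.
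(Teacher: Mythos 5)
Your proof is correct and is exactly the paper's argument: the paper deduces the corollary as "immediate" from Lemma~\ref{L:indeterminateparameter} via the isomorphism $\Q(t)\equiv\Q(a)$ for transcendental $a$, which is precisely the reduction you spell out. You have simply made explicit the routine details (extension of the isomorphism to polynomial rings, preservation of irreducibility) that the paper leaves to the reader.
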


\begin{cor}\label{C:cardinalityoftrisectibleangles} $\alpha$ is non-trisectable  whenever $\cos(\alpha)$ is transcendental. Therefore, \emph{Tri} is countable and the number of non-trisectable angles is uncountable.
\end{cor}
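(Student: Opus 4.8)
The plan is to deduce this immediately from Theorem \ref{T:citerionfortrisectability} (the reducibility criterion) and Corollary \ref{C:transcendentalimpliesirreducible} (irreducibility in the transcendental case). First I would note the trivial but necessary observation that, for any angle $\alpha$, the number $a=2\cos(\alpha)$ is transcendental if and only if $\cos(\alpha)$ is, since $2$ is a nonzero rational and multiplication by a nonzero rational preserves both algebraicity and transcendence. Thus, assuming $\cos(\alpha)$ transcendental, $a=2\cos(\alpha)$ is transcendental, so Corollary \ref{C:transcendentalimpliesirreducible} gives that $p(x,a)$ is irreducible over $\Q(a)$, whence Theorem \ref{T:citerionfortrisectability} yields that $\alpha$ is not trisectable. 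That settles the first assertion.

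For the countability of \emph{Tri}, I would argue by contrapositive from what was just shown: if $\alpha$ is trisectable, then $\cos(\alpha)$ — and therefore $2\cos(\alpha)$ — is algebraic. Hence $\emph{Tri}\subseteq\A\cap[-2,2]$, and since $\A$ is countable, $\emph{Tri}$ is countable.

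For the uncountability of the set of non-trisectable angles, I would pass from trisection numbers back to angles via the map $\alpha\mapsto 2\cos(\alpha)$ from $S^1$ to $[-2,2]$. For each fixed value $c$, the fiber $\{\alpha\in S^1 : 2\cos(\alpha)=c\}$ has at most two elements, namely the (at most two) points of $S^1$ with first coordinate $c/2$. By the previous paragraph the trisectable angles are all carried into the countable set \emph{Tri}, so the set of trisectable angles is a countable union of fibers each of size $\le 2$, hence countable. Since $S^1$ is uncountable, the complementary set of non-trisectable angles is uncountable.

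There is essentially no obstacle here; the corollary is a bookkeeping consequence of the two cited results. The only point that requires a moment's care is the translation of countability from cosines to angles — i.e., checking that the fibers of $\alpha\mapsto\cos(\alpha)$ on $S^1$ are finite — which is immediate from the definition of the identification of angles with points of $S^1$.
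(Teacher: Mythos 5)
Your proposal is correct and follows essentially the same route as the paper: apply Corollary \ref{C:transcendentalimpliesirreducible} together with the trisectability criterion to get the first assertion, conclude $Tri\subseteq\A$ hence countable, and then use the fact that $\alpha\mapsto 2\cos(\alpha)$ is at most two-to-one to transfer countability of $Tri$ back to the set of trisectable angles. The extra remarks you include (transcendence of $\cos(\alpha)$ versus $2\cos(\alpha)$, finiteness of fibers) are exactly the bookkeeping the paper leaves implicit.
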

\begin{proof}  Let $a=2\cos(\alpha)$ and $b=2\cos(\alpha/3)$.  We have shown that the following assertions are equivalent: $\alpha$ is non-trisectable; $b$ is not constructible over $\Q(a)$; $p(x,a)$ is irreducible over $\Q(a)$.  Thus, by  Corollary \ref{C:transcendentalimpliesirreducible}, when $a$   is transcendental, $\alpha$ is not trisectable.  Therefore, $\T$ is a subset of $\A\cap\R$, and so it is countable. Since $\alpha\mapsto a$ is at most $2-1$ and is onto $[-2,2]$, the set of non-trisectable $\alpha$ is uncountable.
\end{proof}

\vspace{.1in}

This corollary shows that to find a trisectable angle $\alpha$ (or rather, to find the corresponding number $2\cos(\alpha)=a$), we  can  require, without loss of generality,  that we search for $a$  among the \emph{algebraic numbers. Therefore, we assume throughout the rest of this paper that $a$ is algebraic.}

\vspace{.1in}

In Appendix A, we generalize Corollary \ref{C:cardinalityoftrisectibleangles} to the case of  $n$-sectable and non-$n$-sectable angles, for every $n$ that is not a power of $2$.

\subsection{Non-trisectable angles with rational cosines}

We now focus on the numbers $a=2\cos(\alpha)$ to describe some examples of non-trisectable angles.   The next two results display countably many examples of \emph{rational} $a\in [-2,2]$ that are not trisection numbers.  Contrast these with the non-trisectable angles $\frac{\pi}{3}+\frac{\pi}{2^n}$, whose cosines are constructible numbers of arbitrarily high degree over \Q\ (cf. Appendix C).

\vspace{.1in}

\begin{prop} If $a$ is a non-zero  square in $\Q\cap[-2,2]$\, then $a\not\in \emph{Tri}$.
\end{prop}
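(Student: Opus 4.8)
The plan is to use Theorem~\ref{T:citerionfortrisectability}: $a$ is a trisection number if and only if $p(x,a)=x^3-3x-a$ is reducible over $\Q$, and since $p$ has degree $3$, reducibility over $\Q$ is equivalent to $p(x,a)$ having a rational root. So I would suppose, for contradiction, that $a = c^2$ for some nonzero rational $c$ and that there is a rational number $\rho$ with $\rho^3 - 3\rho - c^2 = 0$. The goal is to derive a contradiction purely by elementary arguments about rational and integral solutions.

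First I would clear denominators. Write $\rho = m/q$ in lowest terms, with $q \geq 1$; the relation $\rho^3 - 3\rho = c^2$ shows $c^2 = (m^3 - 3mq^2)/q^3$, and since the left side is a rational square I would analyze the $q$-divisibility: the rational root theorem applied to $q^3 x^3 \cdots$ (or directly, $q^3 \mid m^3$ together with $\gcd(m,q)=1$) forces $q=1$, so in fact $\rho$ is an integer, say $\rho = n \in \Z$, and we have $n^3 - 3n = c^2$, i.e. $n(n^2-3) = c^2$. Next I would clear the denominator of $c$: writing $c = u/v$ in lowest terms gives $v^2 (n^3 - 3n) = u^2$, and $\gcd(u,v)=1$ forces $v^2 \mid 1$, so $c$ is also an integer. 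Thus the whole problem reduces to: \emph{the Diophantine equation $n(n^2 - 3) = c^2$ has no solution with $n, c$ integers and $c \neq 0$.}

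The main obstacle is this last Diophantine step, and here is how I would attack it. Compute $d = \gcd(n, n^2 - 3)$; since $d \mid n$ and $d \mid n^2 - 3$ we get $d \mid 3$, so $d \in \{1, 3\}$. In the case $d = 1$, the factors $n$ and $n^2 - 3$ are coprime and their product is a square, so each is (up to sign, and both must have the same sign since the product is a positive square) a perfect square; writing $n = s^2$, $n^2 - 3 = w^2$ gives $s^4 - w^2 = 3$, i.e. $(s^2 - w)(s^2 + w) = 3$, whose only integer factorizations force $s^2 = 2$, impossible — this disposes of the coprime case (also handling $n=0$, which gives $c=0$, excluded, and small negative $n$). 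In the case $3 \mid n$, write $n = 3\ell$; then $n(n^2-3) = 9\ell(3\ell^2 - 1)$, and since $\gcd(\ell, 3\ell^2 - 1) = 1$ and $3 \nmid 3\ell^2 - 1$, for $9\ell(3\ell^2-1)$ to be a square we need $\ell(3\ell^2 - 1)$ to be a square with $\ell$ and $3\ell^2 - 1$ coprime, hence each a square up to sign; the equations $\ell = \pm e^2$, $3\ell^2 - 1 = f^2$ lead to $3e^4 - 1 = f^2$, i.e. $f^2 \equiv -1 \pmod 3$, which has no solution since $-1$ is not a quadratic residue mod $3$. That closes every case.

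I expect the genuinely delicate point to be organizing the coprimality/sign bookkeeping cleanly — making sure that ``product of coprime integers is a square implies each is $\pm$ a square'' is applied with the correct sign and that the finitely many small values of $n$ (say $|n| \leq 2$, and $n=0,3,-3$) are checked by hand rather than swept into the general argument. A cleaner alternative worth considering is a single congruence obstruction: reduce $n^3 - 3n = c^2$ modulo a small prime and show the left side never lands in the set of squares; for instance working modulo $9$ or modulo some prime where the cubic $n^3-3n$ misses the quadratic residues would give the whole result in one line, and if such a prime exists I would use that instead of the factorization argument above.
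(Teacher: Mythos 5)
The reduction from rational $\rho$ to integer $\rho$ is where your argument breaks down, and unfortunately that step is the whole ballgame. Writing $\rho=m/q$ and $c=u/v$ in lowest terms, the identity $(m^{3}-3mq^{2})/q^{3}=u^{2}/v^{2}$ does equate two fractions in lowest terms (since $\gcd(m,q)=1$ forces $\gcd(m^{3}-3mq^{2},q^{3})=1$), but what it yields is $q^{3}=v^{2}$ and $m^{3}-3mq^{2}=u^{2}$ --- not $q=1$. The divisibility ``$q^{3}\mid m^{3}$'' you invoke presupposes that $c^{2}$ is an integer, which you only argue afterwards, and that later step in turn presupposes $\rho\in\Z$; the two reductions are circular. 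What one actually gets is $q=t^{2}$, $v=t^{3}$ for some $t\geq 1$ with $\gcd(m,t)=1$, so the Diophantine problem becomes $m(m^{2}-3t^{4})=u^{2}$. This is exactly the general shape of a rational point on the elliptic curve $y^{2}=x^{3}-3x$, and nothing elementary forces $t=1$: on nearby curves of the same form, e.g.\ $y^{2}=x^{3}-25x$, there are genuine rational points with $t>1$ (such as $x=1681/144$), so any proof of integrality must already contain a proof that nontrivial points do not exist.

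Your coprime-factorization analysis is correct for the integral equation $n(n^{2}-3)=c^{2}$ (modulo careful sign bookkeeping, which you flag), but applied to $m(m^{2}-3t^{4})=u^{2}$ it leads to quartic equations of the shape $s^{4}-3t^{4}=\pm w^{2}$ and $3e^{4}-t^{4}=\pm f^{2}$, which admit no one-step congruence obstruction and require a genuine infinite descent; carrying this out is precisely the $2$-descent proving that the Mordell--Weil group of $y^{2}=x^{3}-3x$ is $\mathbb{Z}_{2}$. The paper short-circuits all of this by citing that known fact (Husem\"{o}ller, pp.\ 33--35): the only rational points are $(0,0)$ and the point at infinity, so $c^{2}=x^{3}-3x$ has no rational solution with $c\neq 0$. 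Your hoped-for ``single congruence'' alternative cannot exist either: since $(0,0)$ is a rational point, the congruence $c^{2}\equiv \rho^{3}-3\rho$ is solvable modulo every prime (indeed $p$-adically), and only a global descent can exclude the non-torsion points. So either carry out the full descent on the quartics above, or take the paper's route and quote the rank-zero computation.
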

\begin{proof}  Let $E$ be the projective elliptic curve whose affine equation is
$y^2=x^3-3x$.  It is well known that the set of rational points on $E$ form a finitely-generated abelian group of rank $0$ and torsion group $\mathbb{Z}_2$
(e.g., see \cite{hus}, pp.33-35).  That is, the only rational points on $E$ are $[0,0,1]$
 and $[0,1,0]$, the point at infinity.  It follows that there is no non-zero rational $c$ such that $c^2=x^3-3x$ has a rational solution. So, $p(x,c^2)$
is irreducible over $\Q=\Q(c^2)$, for all non-zero, rational $c$.  The result now follows from Theorem 2
by restricting to non-zero, rational $c$ such that $c^2\leq 2$.
\end{proof}

\begin{prop} Let $r$ and $s$ be any non-zero integers prime to each other and to $3$. Then $p(x,3r/s)$ is irreducible over \Q.  Hence,  no such $3r/s$ in $[-2,2]$ belongs to $Tri$.

\end{prop}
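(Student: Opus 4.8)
The plan is to show directly that $p(x,3r/s) = x^3 - 3x - 3r/s$ has no root in $\mathbb{Q}$; since a monic cubic over a field is reducible if and only if it has a root in that field, this will establish irreducibility, and then Theorem~\ref{T:citerionfortrisectability} gives the final clause. To make the divisibility arguments clean I would first clear denominators: a rational root of $p(x,3r/s)$, written in lowest terms as $u/v$ with $\gcd(u,v)=1$ and $v>0$, satisfies $u^3 - 3uv^2 = 3rv^3/s$, i.e. $s(u^3 - 3uv^2) = 3rv^3$. Equivalently, multiplying the original equation through by $s$, the number $u/v$ is a root of the integer polynomial $s x^3 - 3s x - 3r$. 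By the rational root theorem applied to this polynomial, $v \mid s$ and $u \mid 3r$.

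First I would pin down $v$. From $s(u^3 - 3uv^2) = 3rv^3$ we get $v^2 \mid s u^3$, and since $\gcd(u,v)=1$ this forces $v^2 \mid s$; combined with $v \mid s$ this already restricts $v$, but the key extra input is the hypothesis that $s$ is prime to $3$. Reducing $s(u^3 - 3uv^2) = 3rv^3$ modulo $3$: the left side is $s u^3 \pmod 3$ and the right side is $0$, so $3 \mid s u^3$, and since $3 \nmid s$ we get $3 \mid u$. Now I would exploit this: write $u = 3u_1$. Substituting into $s(u^3 - 3uv^2) = 3rv^3$ and dividing by $3$ gives $s(9u_1^3 - u_1 v^2) = r v^3$, so $s \cdot (- u_1 v^2) \equiv r v^3 \pmod 9$ after a further look, but more simply: modulo $3$ this reads $s u_1 v^2 \equiv 0 \pmod 3$ is not forced — instead reduce $s(9u_1^3 - u_1 v^2) = r v^3$ modulo $3$ to get $-s u_1 v^2 \equiv r v^3 \pmod 3$, i.e. $v^2(s u_1 + r v) \equiv 0 \pmod 3$. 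Since $3 \mid u$ and $\gcd(u,v)=1$, we have $3 \nmid v$, so $3 \mid (s u_1 + r v)$. Using $3 \nmid s$ and $3 \nmid r$ this gives a congruence relating $u_1$ and $v$ mod $3$; I would chase this one more round. In fact the cleanest route is to go back to $s(u^3 - 3uv^2) = 3rv^3$ with $u = 3u_1$: it becomes $3s(9u_1^3 - u_1 v^2) = 3rv^3$, hence $s u_1 (9u_1^2 - v^2) = r v^3$. Now $3 \nmid v$ and $3 \nmid r$, and the left side: if $3 \nmid u_1$ then $9u_1^2 - v^2 \equiv -v^2 \not\equiv 0 \pmod 3$, so $3 \nmid (\text{left side})$, consistent; if $3 \mid u_1$ the left side is divisible by $3$ but the right side isn't — contradiction. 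So $3 \nmid u_1$, i.e. $3$ exactly divides $u$.

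The main obstacle I anticipate is managing these nested divisibility-by-$3$ deductions without circularity, and in particular extracting a genuine contradiction rather than merely constraining the root. The resolution should come from the $r,s$ coprimality hypotheses combined with the size/archimedean constraint: once I know $u = 3u_1$ with $3 \nmid u_1$ and I have the equation $s u_1 (9u_1^2 - v^2) = r v^3$ with $v^2 \mid s$, I would write $s = v^2 s_1$ and reduce to $s_1 u_1 (9u_1^2 - v^2) = r v$. Since $\gcd(u_1, v) = 1$, $u_1 \mid r v$ forces $u_1 \mid r$; since $v \mid s_1 u_1(9u_1^2 - v^2)$ and $\gcd(v, u_1) = 1$ and $\gcd(v, 9u_1^2 - v^2) = \gcd(v, 9u_1^2)$ divides $9$ with $3 \nmid v$ so equals $1$, we get $v \mid s_1$, hence $v^3 \mid s$. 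Iterating (or invoking $\gcd(r,s)=1$ directly against a common factor squeezed out of both sides) collapses $v$ to $\pm 1$ and then $s \mid 3$ forces $s = \pm 1$, reducing to the case $a = 3r$ an integer multiple of $3$; at that point $u^3 - 3u = 3r$ with $u \in \mathbb{Z}$ gives $3 \mid u^3$, so $3 \mid u$, so $u = 3u_1$ and $9u_1^3 - u_1 = r$, forcing $u_1 \mid r$ and, reducing mod $3$, $-u_1 \equiv r \pmod 3$; but also $9u_1^3 - u_1 \equiv -u_1 \pmod 9$ while one checks $r \not\equiv -u_1 \pmod 9$ cannot be the sticking point — rather, the genuinely clean finish is that $|u_1| \geq 1$ and $|9u_1^3 - u_1| = |u_1|\,|9u_1^2 - 1| \geq 8$ grows, so only finitely many $r$ could arise, and the coprimality $\gcd(r,3) = 1$ plus $9u_1^2 - 1 \equiv -1 \pmod 3$ shows $3 \nmid r$ is automatic — so I would instead close the argument by the observation that $r = u_1(9u_1^2 - 1)$ is then divisible by $u_1$ and that $s = \pm 1$ contradicts nothing, meaning the real content is simply: \emph{after all reductions, $a = 3r/s$ forces $s = \pm 1$ and $u_1 \mid r$ with $r/u_1 = 9u_1^2 - 1$, and I must rule this out using that $\gcd(r,s) = 1$ with $s = \pm 1$ is vacuous} — so the actual final contradiction must be arithmetic in $r$ alone. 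I expect the paper's own proof is shorter: most likely it observes that mod $3$ the equation $s u^3 \equiv 3 s u v^2 + 3 r v^3 \equiv 0$ forces $3 \mid u$, then mod $9$ after substituting $u = 3u_1$ the equation $3 s(9 u_1^3 - u_1 v^2) = 3 r v^3$ becomes $s u_1 v^2 \equiv 0 \pmod{3}$ — wait, $s(9u_1^3 - u_1 v^2) = r v^3$, and mod $3$: $-s u_1 v^2 \equiv r v^3$, so $3 \mid v^2(s u_1 + r v)$, so $3 \mid (s u_1 + r v)$; but we showed $3 \nmid u_1$ above via the $9u_1^2$ term — hmm, that used a different equation. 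I would reconcile by committing to the single equation $s u_1(9 u_1^2 - v^2) = r v^3$ (valid once $u = 3u_1$), note $3 \nmid v$ and $3 \nmid r$ and $3 \nmid s$, so modulo $3$ we need $3 \mid u_1(9u_1^2 - v^2) = u_1(-v^2)$, impossible since $3 \nmid u_1$ and $3 \nmid v$. \emph{That is the contradiction.} So the clean proof is: (1) rational root $u/v$ in lowest terms; (2) mod $3$ of $s(u^3 - 3uv^2) = 3rv^3$ gives $3 \mid u$, say $u = 3u_1$; (3) substituting and dividing by $3$ gives $s u_1(9u_1^2 - v^2) = r v^3$; (4) mod $3$ of this, using $3 \nmid r$, $3 \nmid s$, $3 \nmid v$ (from $\gcd(u,v)=1$) and $3 \nmid u_1$ (from $9u_1^2 - v^2 \equiv -v^2$), yields $3 \mid v^2$, a contradiction. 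The hard part is just being careful that $3 \nmid u_1$: if $3 \mid u_1$ then the left side of $s u_1(9u_1^2 - v^2) = r v^3$ is divisible by $3$ while the right side is not, so indeed $3 \nmid u_1$, and step (4) goes through.
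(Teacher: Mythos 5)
Your route is genuinely different from the paper's. The paper disposes of the proposition in one line: clear denominators to get $sx^3-3sx-3r\in\Z[x]$, apply Eisenstein's criterion at the prime $3$ (the leading coefficient $s$ is prime to $3$, the lower coefficients $0$, $-3s$, $-3r$ are divisible by $3$, and the constant term $-3r$ is not divisible by $9$ since $3\nmid r$), and pass back to $\Q[x]$ by Gauss's Lemma. You instead argue that the monic cubic has no rational root, which does suffice for a cubic; that is a legitimate, equally elementary alternative, though the Eisenstein argument has the advantage of transferring verbatim to the number-field generalization in the paper's remark following the proposition.

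However, your final ``clean proof'' contains an arithmetic slip that breaks the stated contradiction. With $u=3u_1$, the term $3uv^2$ equals $9u_1v^2$, not $3u_1v^2$, so $s(u^3-3uv^2)=3rv^3$ becomes $s(27u_1^3-9u_1v^2)=3rv^3$, i.e.
\[
3su_1(3u_1^2-v^2)=rv^3,
\]
not $su_1(9u_1^2-v^2)=rv^3$ as you wrote. With your (incorrect) equation, reducing mod $3$ yields only the congruence $-su_1v^2\equiv rv^3 \pmod 3$, which is perfectly satisfiable, so step (4) produces no contradiction as written; your assertion that ``we need $3\mid u_1(9u_1^2-v^2)$'' does not follow, because the right-hand side of your equation is not divisible by $3$. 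With the correct equation the argument closes at once and the whole digression about whether $3\mid u_1$ is unnecessary: the left-hand side carries an explicit factor of $3$, while the right-hand side $rv^3$ is prime to $3$ because $3\nmid r$ by hypothesis and $3\nmid v$ (since $3\mid u$ and $\gcd(u,v)=1$). So your proof is salvageable with a one-line correction, and in fact becomes shorter; the long middle portion of the proposal (the $v^2\mid s$ chain, the reduction to $s=\pm1$, etc.) should simply be deleted.
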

\begin{proof}  The irreducibility of $p(x,3r/s)$ is an immediate consequence of the Eisenstein Criterion
and the Gauss Lemma.
\end{proof}

\noindent\textbf{Remarks:}\quad a)  Any real number field $K$ (i.e., subfield of \R\ of finite degree over \Q) whose integers admit unique factorization can be used in place of \Q\ in this proposition, provided $3$ is a prime in the ring of integers of $K$.

b) The foregoing results show that both $K\cap\T$ and $(K\cap[-2,2]) \setminus (K\cap\T)$ are big subsets of $K\cap[-2,2]$ for many real number fields $K$.  In the next few sections, we show  in a number of cases that  $K\cap\T$ is much the smaller of the two.

\subsection{Non-constructible trisection numbers}

We conclude with a family of examples of real number fields containing countably many trisection numbers that are not constructible.  We remind the reader that these correspond to trisectable angles that are not constructible.

First, it will be convenient to make use of the polynomial function $f:\R\rightarrow \R$ given by the equation $f(x)= x^3-3x$, so that, for each real $x$ and $a$, $p(x,a)=f(x)-a$.  Set $y=f(x)$. It is easy to check that if $-2\leq x \leq 2$ (resp, $x <-2,\; x>2$), then $-2\leq y \leq 2$ (resp., $y <-2,\; y>2$).  It follows immediately  that $f([-2,2])=[-2,2]$ (resp., $f^{-1}([-2,2])=[-2,2]$ , so that $f(S\cap[-2,2])=f(S)\cap[-2,2]$ (resp., $f^{-1}(S\cap[-2,2])=f^{-1}(S)\cap[-2,2]$) for every $S$. The following lemma will be useful for our later computations.  We leave the easy verification to the reader.

\begin{lemma}  Let $K$ be any subfield of\;  \R.  Then $\T\cap K\subseteq f(K\cap[-2,2])$.
\end{lemma}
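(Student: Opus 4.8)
The plan is to chase the definitions. Suppose $a \in \T \cap K$. By definition of $\T$ (i.e. $Tri$), there is a trisectable angle $\alpha$ with $a = 2\cos(\alpha)$, and since $\cos(\alpha) \in [-1,1]$ we have $a \in [-2,2]$; also $a \in K$ by hypothesis. So $a \in K \cap [-2,2]$. It remains to exhibit an element $b \in K$ with $f(b) = a$, where $f(x) = x^3 - 3x$; then $b \in f^{-1}(K) $ — more precisely, since $f(b) = a \in [-2,2]$ forces $b \in [-2,2]$ by the stated property of $f$, we get $b \in K \cap [-2,2]$ and $a = f(b) \in f(K \cap [-2,2])$.

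To produce such a $b$: by Theorem \ref{T:citerionfortrisectability}, trisectability of $\alpha$ means $p(x,a) = x^3 - 3x - a$ is reducible over $\Q(a) \subseteq K$. A cubic that is reducible over a field $K$ has a root in $K$; call it $b \in K$. Then $p(b,a) = 0$, i.e. $f(b) = b^3 - 3b = a$. That is exactly what we need. Alternatively, and more concretely, $b = 2\cos(\alpha/3)$ is always a root of $p(x,a)$ by the trig identity \eqref{E:basicequation}; the point of invoking reducibility is to know that \emph{some} root lies in $K$, and the factorization of a reducible monic cubic over $K$ into a linear times a quadratic factor guarantees the linear factor has its root in $K$.

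So the steps, in order, are: (1) unpack $a \in \T \cap K$ to get a trisectable $\alpha$ with $a = 2\cos(\alpha) \in K \cap [-2,2]$; (2) apply Theorem \ref{T:citerionfortrisectability} to get reducibility of $p(x,a)$ over $\Q(a) \subseteq K$, hence (since it is a monic cubic) a root $b \in K$; (3) from $p(b,a)=0$ read off $f(b) = a$; (4) use the already-noted fact that $f^{-1}([-2,2]) = [-2,2]$ to conclude $b \in K \cap [-2,2]$, so $a = f(b) \in f(K \cap [-2,2])$. There is essentially no obstacle here — the only thing to be careful about is the passage from "$p(x,a)$ reducible over $\Q(a)$" to "$p(x,a)$ has a root in $K$," which is immediate because $\Q(a) \subseteq K$ and a reducible cubic over any field has a linear factor over that field, hence a root there. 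This is why the lemma is flagged as an easy verification.
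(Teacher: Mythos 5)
Your proof is correct and is exactly the ``easy verification'' the paper leaves to the reader: unpack $a=2\cos(\alpha)$, use Theorem 2.2 to get reducibility of the monic cubic $p(x,a)$ over $\Q(a)\subseteq K$ and hence a root $b\in K$ with $f(b)=a$, and then use the paper's observation that $f^{-1}([-2,2])=[-2,2]$ to place $b$ in $K\cap[-2,2]$. Nothing is missing.
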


The following proposition now forms the basis for the examples just mentioned.

\begin{prop}\label{P:nonconstructibletrisectionnumbers}  Let $K$ be a finite, constructible extension of \Q,  and let $F$ be a real, finite  extension of $K$ such that $[F:K]$ is   prime to $2$ and $3$.  Then:  (a) $\T\cap F= f(F\cap[-2,2])$; \quad (b)  No element of $f(F\setminus K)$ is constructible.  In particular, when $F\neq K$, $f(F\setminus K)$ contains a countable infinity of trisection numbers that are not constructible.
\end{prop}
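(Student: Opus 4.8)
�The plan is to prove the two parts separately, deriving (a) from Theorem \ref{T:citerionfortrisectability} together with a degree argument, and (b) from the tower criterion for constructibility (Theorem \ref{T:fundamentalcriterionforconstructibility}).

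\textbf{Part (a).}\quad The inclusion $\T\cap F\subseteq f(F\cap[-2,2])$ is already recorded in the preceding lemma, so only the reverse inclusion needs proof. Let $a=f(b)$ with $b\in F\cap[-2,2]$; I want to show $a$ is a trisection number, i.e., by Theorem \ref{T:citerionfortrisectability}, that $p(x,a)=x^3-3x-a$ is reducible over $\Q(a)$. Since $a=2\cos\alpha$ for the angle $\alpha$ with $2\cos(\alpha/3)$ one of the real roots of $p(x,a)$, reducibility over $\Q(a)$ is exactly what makes $\alpha$ trisectable and $a$ a trisection number. Now $b$ is a root of $p(x,a)$ lying in $F$, so $[\Q(a)(b):\Q(a)]\in\{1,2,3\}$, and it divides $[F:\Q(a)]$. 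Here $\Q(a)\subseteq K$ because $K\supseteq\Q$ is a field containing $a=b^3-3b$... wait, not obviously — $b$ need not lie in $K$. The key point is instead that $[F:\Q(a)]$ divides $[F:\Q]$, and I should show $3\nmid[F:\Q(a)]$. Factor $[F:\Q]=[F:K][K:\Q]$; by hypothesis $[F:K]$ is prime to $3$, and $[K:\Q]$ is a power of $2$ since $K$ is a finite constructible extension of $\Q$ (by the remark after Theorem \ref{T:fundamentalcriterionforconstructibility}, every element of $K$ has degree a power of $2$, and a finite constructible extension has $2$-power degree). Hence $[F:\Q]$ is prime to $3$, so $[F:\Q(a)]$ is prime to $3$, so $[\Q(a)(b):\Q(a)]\ne 3$, forcing it to be $1$ or $2$. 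Either way $p(x,a)$ has a factor of degree $\le 2$ over $\Q(a)$, hence is reducible, hence $a\in\T$. Combined with $a\in f(F\cap[-2,2])\subseteq[-2,2]$, this gives $a\in\T\cap F$ once we note $a\in F$; and $a=f(b)\in F$ since $F$ is a field. This establishes $f(F\cap[-2,2])\subseteq\T\cap F$, completing (a).

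\textbf{Part (b).}\quad Let $a\in f(F\setminus K)$, say $a=f(b)$ with $b\in F$, $b\notin K$. Suppose for contradiction that $a$ is constructible. Since $K$ is a constructible field, $C_\R(\Q)=C_\R(K)$, so $a$ is constructible over $K$, and by Theorem \ref{T:fundamentalcriterionforconstructibility} the minimal polynomial of $a$ over $K$ has $2$-power degree; in particular $[K(a):K]$ is a power of $2$. Now $b$ satisfies $p(x,a)=0$ over $K(a)$, so $[K(a,b):K(a)]\le 3$. On the other hand $K(a,b)\subseteq F$, so $[K(a,b):K]$ divides $[F:K]$, which is prime to $2$ and to $3$. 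But $[K(a,b):K]=[K(a,b):K(a)]\cdot[K(a):K]$ is a product of a divisor of $[F:K]$ with... here I must be careful: $[K(a):K]$ need not itself divide $[F:K]$. Better: $[K(a):K]$ divides $[K(a,b):K]$, which divides $[F:K]$; since $[F:K]$ is odd and $[K(a):K]$ is a power of $2$, we get $[K(a):K]=1$, i.e.\ $a\in K$. Then $b$ is a root of $p(x,a)\in K[x]$, so $[K(b):K]\in\{1,2,3\}$ and divides $[F:K]$; since $[F:K]$ is prime to $2$ and $3$, we get $[K(b):K]=1$, i.e.\ $b\in K$, contradicting $b\notin K$. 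Hence no element of $f(F\setminus K)$ is constructible. Finally, when $F\ne K$, $F\setminus K$ is infinite (a nontrivial finite field extension has infinitely many elements outside the base field, as $F$ is an infinite field and $K$ a proper subspace), and $f$ restricted to any infinite subset of $F\setminus K$ has infinite image since $f$ is at most $3$-to-$1$; by part (a) all these values in $[-2,2]$ are trisection numbers (and the others, outside $[-2,2]$, one still checks are trisection numbers — actually trisection numbers lie in $[-2,2]$ by definition, so I should intersect with $f(F\cap[-2,2])$). To get countably many trisection numbers that are not constructible, note $f((F\setminus K)\cap[-2,2])$ — I should verify this is infinite when $F\ne K$; since $K\cap[-2,2]$ is infinite and $F\cap[-2,2]$ strictly larger (it contains, e.g., elements not in $K$ because one can scale a generator of $F/K$ into the interval), the image is an infinite subset of $\T\cap F$, none of whose elements is constructible by the above.

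\textbf{Main obstacle.}\quad The delicate point throughout is keeping the divisibility/degree bookkeeping honest: the degree $[K(a):K]$ or $[\Q(a)(b):\Q(a)]$ does not directly divide $[F:K]$ or $[F:\Q]$, only the composite degrees do, so one must route every coprimality argument through a chain $K\subseteq K(a)\subseteq K(a,b)\subseteq F$ (or the analogue over $\Q$) and use multiplicativity of degrees in a tower together with ``a power of $2$ dividing an odd number is $1$'' and ``a divisor of an odd, $3$-free number that lies in $\{1,2,3\}$ is $1$.'' The one genuinely geometric input is that a finite constructible extension of $\Q$ has degree a power of $2$ (so that $3\nmid[K:\Q]$), which follows from Theorem \ref{T:fundamentalcriterionforconstructibility}; everything else is elementary field theory plus the already-proven Theorems \ref{T:fundamentalcriterionforconstructibility} and \ref{T:citerionfortrisectability}.
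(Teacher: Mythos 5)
Your proof is correct. Part (b) and the closing counting argument are essentially the paper's own (the paper phrases (b) directly --- $[K(a):K]$ divides the odd number $[F:K]$ and is $\neq 1$ because $K(a)=K(b)$ with $b\notin K$, hence is not a power of $2$ --- whereas you run the same degree bookkeeping as a proof by contradiction; the content is identical). Part (a), however, takes a genuinely different route. The paper shows $[K(b):K(a)]=1$, so that $p(x,a)$ is reducible over $K(a)$; since Theorem \ref{T:citerionfortrisectability} requires reducibility over $\Q(a)$, it cannot be invoked directly, and the paper instead re-runs its proof over $K(a)$ and then descends to $\Q(a)$ by splicing the quadratic tower $\Q=L_0<\cdots<L_n=K$ into a tower $\Q(a)\leq L_1(a)\leq\cdots\leq K(a)$ of at-most-quadratic extensions. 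You work over $\Q(a)$ from the outset: since $[K:\Q]$ is a power of $2$ (the only place you use constructibility of $K$ in part (a)) and $[F:K]$ is prime to $3$, the chain $\Q(a)\subseteq\Q(a)(b)\subseteq F$ forces $[\Q(a)(b):\Q(a)]\in\{1,2\}$, so $p(x,a)$ is reducible over $\Q(a)$ and Theorem \ref{T:citerionfortrisectability} applies as a black box. Your version is shorter and cleanly separates the field theory from the geometry, at the cost of needing the (true, primitive-element-theorem) fact that $[K:\Q]$ itself --- not merely the degree of each element of $K$ --- is a power of $2$, a point you correctly flag; the paper's version avoids that fact but redoes part of the proof of Theorem \ref{T:citerionfortrisectability} by hand. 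Your self-corrections all land on valid reasoning, and the remaining informal spots (infinitude of $(F\setminus K)\cap[-2,2]$ via scaling a generator by small rationals, $f$ being at most $3$-to-$1$) are no less rigorous than the paper's own treatment.
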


\begin{proof} (a) By the lemma, it suffices to prove that $\T\cap F\supseteq f(F\cap[-2,2])$.  So, choose any $a\in f(F\cap[-2,2])=f(F)\cap[-2,2]$.  That is, $a$ is in $[-2,2]$ and is of the form $a=b^3-3b$, for some $b\in F$. The degree $d$ of $b$ over $K(a)$ divides
$[K(b):K]$, which, in turn, divides the odd number $[F:K]$.  Since $b$ is a zero of $p(x,a)\in K(a)[x]$, $d$ cannot be greater than $3$.  Since it is odd and prime to $3$, it must equal $1$, so $K(a)=K(b)$. By construction, $p(x,a)$ is reducible over $K(b)$, so it is reducible over $K(a)$.  Now, since $a\in [-2,2]$, we may write $a$ as $a=2\cos(\alpha)$, for some angle $\alpha$.  Then $2\cos(\alpha/3)$ is a zero of $p(x,a)$ by the definition of $p(x,a)$.  We now argue as in the proof of Theorem \ref{T:citerionfortrisectability}. Since $p(x,a)$ factors into the product of a linear and a quadratic polynomial over $K(a)$, and since $2\cos(\alpha/3)$ is a zero of one of these, Theorem \ref{T:fundamentalcriterionforconstructibility} implies that $2\cos(\alpha/3)$ is constructible over $K(a)$.

Now let $\Q=L_0<L_1<\ldots< L_n=K$ be a tower of real quadratic extensions, which exists by the hypothesis on  $K$.  Then, $\Q(a)=L_0(a)\leq\ldots\leq L_n(a)=K(a)$ is a tower of real field extensions, each at most quadratic.  It follows that $2\cos(\alpha/3)$ is constructible over $\Q(a)$, which shows that $\alpha$ is trisectable.

Therefore, $a=2\cos(\alpha)\in \T\cap F$, as required.

(b)\quad Now suppose that $b\in F\setminus K$, and set $a=f(b)$, as above.
The argument in (a) shows that $K(a)=K(b)$, so, in particular $a\not\in K$. Therefore
$[K(a):K]\neq 1$.  Further,  $[K(a):K]$  is odd, since it divides $[F:K]$.  Therefore, by Theorem \ref{T:fundamentalcriterionforconstructibility}, $a$ is not constructible over $K$. So it is not constructible over \Q. Since $F\cap[-2,2]\setminus K$ is infinite, and $f$ is at most three to one,\; $f(F\cap[-2,2]\setminus K)$ is an infinite set of trisection numbers (by part (a)) none of which is constructible.

\end{proof}

\noindent\textbf{Addendum to Proposition \ref{P:nonconstructibletrisectionnumbers}:}\quad \emph{Let  $K$ be any real number field and $m$ any positive integer. There  exists an extension $F$ of $K$ such that $[F:K]=m$.  When $m$ is odd, we may choose the extension to be real.}

Therefore, there exist many instances of constructible fields $K$ and extensions $F$ of $K$ as described by Proposition \ref{P:nonconstructibletrisectionnumbers} , i.e., the proposition is not vacuous.

\vspace{.1in}

We thank Ravi Ramakrishna for suggesting the following proof of the addendum, which we give in three steps.

\vspace{.1in}

\noindent\textbf{Step 1:}\quad   Let $R$  be the ring of integers of $K$, and choose any proper prime ideal $\mathfrak{p}\subset R$. Let $S$ be the localization $R_{\mathfrak{p}}$ of $R$ at $\mathfrak{p}$, and let  $\mathfrak{q}$ be the extension of $\mathfrak{p}$ to $S$.

\vspace{.1in}

\noindent\textbf{Step 2:}\quad  $\mathfrak{q}$ is the unique maximal ideal of the local ring $S$. The non-zero ideals of $S$ are precisely the non-negative powers of  $\mathfrak{q}$, all of which are distinct.  Choose any $q\in \mathfrak{q}\setminus\mathfrak{q}^2$. Since $S$ is a Dedekind domain, the ideal $(q)$ equals a unique non-negative power of $\mathfrak{q}$, which must be the first power.  Therefore, $S$ is a principal ideal domain.

\vspace{.1in}

\noindent\textbf{Step 3:}\quad By Step 2, we may apply  Eisenstein's criterion to the polynomial $x^m-q\in S[x]$, concluding that it is irreducible. Since $K$ is the field of fractions of $S$, Gauss's Lemma implies that $x^m-q$ is irreducible in $K[x]$.
Letting $c$ be any zero of $x^m-q$ (real, if $m$ is odd), the field $F=K(c)$ satisfies the desired condition.

\vspace{.2in}

\section{ The density of $K\cap\T$ in $K\cap[-2,2]$: preliminaries and an overview}\label{S:densityoverview}

We have seen  that the set \T\ of trisection numbers consists of real algebraic numbers
in $[-2,2]$, i.e.,$ \T\subseteq \A\cap\R\cap[-2,2]$.  As a step toward getting more information about the global structure
of \T, we specialize to a  number field $K\subset \A\cap\R$, and we attempt to compute the density of $\T\cap K$ in $K\cap[-2,2]$.

\subsection{Height and density}\label{SS:heightanddensity}

One way to define density in this context is to make use of a so-called \emph{height function}
\[ h_K:K\rightarrow (0,\infty).\]
The definition of $h_K$ that we have in mind is a simplified version of what is used in Diophantine Geometry (cf. \cite{lan}). We begin by choosing a fixed  \Q-vector space basis $\mathcal{V}=
\{v_1,v_2,\ldots v_k\}$ of $K$.

\begin{lemma} Every $x\in K$ can be written uniquely as
\begin{equation}\label{E:elementsofK}
x=(a_1v_1+\ldots+a_kv_k)/b,
\end{equation}
for integers $a_1,\ldots,a_k,b$ satisfying \begin{enumerate}
 \item $b>0$ and
 \item $a_1,\ldots a_k,b$ have no prime factors in common.
 \end{enumerate}
\end{lemma}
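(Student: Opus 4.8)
The plan is to start from the obvious fact that $\mathcal{V} = \{v_1,\ldots,v_k\}$ is a $\Q$-basis, so every $x \in K$ can be written as $x = c_1 v_1 + \ldots + c_k v_k$ with $c_i \in \Q$ uniquely determined by $x$. First I would clear denominators: write each $c_i = p_i/q_i$ in lowest terms, let $b_0$ be a common denominator (say the least common multiple of the $q_i$, or simply the product), and set $a_i^{(0)} = c_i b_0 \in \Z$. This gives a representation $x = (a_1^{(0)} v_1 + \ldots + a_k^{(0)} v_k)/b_0$ with $b_0 > 0$ and all $a_i^{(0)}, b_0 \in \Z$, establishing \emph{existence} before imposing the coprimality condition~(2).

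Next I would enforce condition~(2). Let $d = \gcd(a_1^{(0)}, \ldots, a_k^{(0)}, b_0) > 0$ and set $a_i = a_i^{(0)}/d$, $b = b_0/d$. Then $x = (a_1 v_1 + \ldots + a_k v_k)/b$, $b > 0$, and by construction $\gcd(a_1,\ldots,a_k,b) = 1$, i.e.\ the integers $a_1,\ldots,a_k,b$ share no common prime factor. This dispenses with existence of a representation satisfying both (1) and (2).

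For \emph{uniqueness}, suppose $x = (a_1 v_1 + \ldots + a_k v_k)/b = (a_1' v_1 + \ldots + a_k' v_k)/b'$ are two such representations, both satisfying (1) and (2). Multiplying through and using linear independence of $\mathcal{V}$ over $\Q$ (equivalently, comparing the rational coordinates $a_i/b = a_i'/b'$ coming from the uniqueness of the $\Q$-coordinates of $x$), we get $b' a_i = b a_i'$ for all $i$, and also $b, b' > 0$. From $b \mid b' a_i$ for every $i$ together with $\gcd(a_1',\ldots,a_k',b') = 1$: since any prime $\ell$ dividing $b$ divides $b' a_i$ for all $i$, and $\ell$ cannot divide all of $a_1',\ldots,a_k',b'$, we conclude $\ell \mid b'$; a symmetric argument gives every prime factor of $b'$ dividing $b$, and then an exponent-matching argument (applying the same reasoning to prime powers, or invoking that $b/b'$ and $b'/b$ are both integers) forces $b = b'$, whence $a_i = a_i'$ for all $i$.

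The main obstacle is the uniqueness argument, specifically the step that upgrades ``$b$ and $b'$ have the same prime divisors'' to ``$b = b'$.'' The cleanest route is to observe directly that $b \mid b'$: from $b' a_i = b a_i'$ we get $b \mid b' a_i$ for each $i$, so $b \mid \gcd_i(b' a_i) = b' \gcd_i(a_i)$; but $\gcd(\gcd_i(a_i), b) = 1$ by condition~(2), so $\gcd_i(a_i)$ is a unit mod $b$ in the relevant sense and in fact $b \mid b'$. By symmetry $b' \mid b$, and since both are positive, $b = b'$; cancelling then yields $a_i = a_i'$. This is routine number theory, so I would state it compactly rather than belabor it.
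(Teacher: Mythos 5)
The paper gives no proof of this lemma (it says ``We leave the proof to the reader''), so there is nothing to compare against; your argument is the standard one and it is correct. One small slip: in the uniqueness step, to show that a prime $\ell \mid b$ must divide $b'$, you should invoke the coprimality of the \emph{unprimed} tuple $(a_1,\ldots,a_k,b)$ --- if $\ell\nmid b'$ then $\ell\mid a_i$ for all $i$, which together with $\ell\mid b$ contradicts $\gcd(a_1,\ldots,a_k,b)=1$ --- rather than the primed one as written; your cleaner closing argument via $b \mid b'\gcd_i(a_i)$ and $\gcd\bigl(\gcd_i(a_i),b\bigr)=1$ is correct as stated and already suffices.
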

We leave the proof to the reader.

Then, using (\ref{E:elementsofK}), we define
\begin{equation}
h_K(x)=\max\{|a_1|,\ldots,|a_k|, b\}.
\end{equation}

For any real, positive $R$, the set
\[B_K(R)\overset{def}{\equiv}h^{-1}_K(0,R]\]
 is finite, and so,
its cardinality $|B_K(R)|$ is a non-negative integer.  Clearly, if $\mathcal{R}$ is any unbounded subset of $(0,\infty)$, then
\[ \bigcup_{R\in\mathcal{R}}B_K(R) = K.\]

For sufficiently large $R$, \emph{the density} $\delta_K(R)$   of \emph{Tri} in $B_K(R)\cap[-2,2]$  is defined to be the ratio
\begin{equation}\label{E:density}
 \delta_K(R)= \frac{|Tri\cap B_K(R)\cap[-2,2]|}{|B_K(R)\cap[-2,2]|}.
 \end{equation}
Alternatively, it might be called the relative frequency of occurrence of elements of $Tri$ in $B_K(R)\cap[-2,2]$.   If the limit
$ \lim_{R\rightarrow \infty}\delta_K(R)$ exists, we call it the \emph{density} of \emph{Tri} in $K\cap[-2,2]$, and we denote it by $\delta_K(\T)$.   It can be viewed as the probability that a randomly selected element of $K\cap[-2,2]$ belongs to $Tri$.  We  now
\vspace{.2in}

\noindent\textbf{Conjecture 1} (Main Conjecture) $\delta_K(\T)=0.$

\vspace{.2in}

Note that the definitions of $h_K$ and $B_K(R)$ depend on the choice of $\mathcal{V}$.  And so our density function  depends on this choice.  It is not hard, however, to show that the height function corresponding to another choice of basis will be commensurate to the first.  This enables us to show that if Conjecture 1 holds for one choice of basis, it will hold for any other.  We  present some details of this discussion in Appendix B.   Here we simply proceed with the definitions arising from a fixed $\mathcal{V}$.

Next, we wish to describe our computational strategy for estimating the densities (\ref{E:density}).  This will make use of some standard ``estimation language,'' which we briefly spell out for the reader's convenience.

\subsection{Estimation}\label{SS:heightandestimation}

We are interested in estimating values of real-valued functions as the arguments get large.
Usually this is done for functions with some standard domain, such as the real numbers or the integers.  However, we need to look at a broader class of domains.  Accordingly, we let $X$ be a locally-compact Hausdorff space with countable basis, and we let $\mathcal{F}$ denote the set of (not necessarily continuous) real-valued functions $f$ on $X$ such that $f^{-1}(0)$ has compact closure.

Let $f,g\in\mathcal{F}$,  with $g>0$ outside some compact set.  Then $f$ is said to be $\mathcal{O}(g)$ if the ratios $|f(x)|/g(x)$ are defined and bounded for all $x$ outside some compact set.  If $f_1$ is also in $\mathcal{F}$, such that $f-f_1$ is $\mathcal O(g)$, then we may express this by writing  $f=f_1 +\mathcal{O}(g)$.

This notation has a number of simple consequences.  For example:

\begin{enumerate}
\item If $f$ is $\mathcal{O}(g)$, $f_1, g_1\in \mathcal{F}$,  and if  $|f_1|\leq|f|$ and $g\leq g_1$, then $f_1$ is $\mathcal{O}(g_1)$.
\item If $f_i$ is $\mathcal{O}(g_i),\,\,i=1,2,\ldots, m$, then  $f_1\cdot f_2\cdot\ldots\cdot f_m$ is $\mathcal{O}(g_1\cdot g_2\cdot\ldots\cdot g_m)$.
\item If $f_i$ and $g_i$ are as in (b), and if $c_1,c_2,\ldots,c_m$ are real numbers that are not all zero, then
$\Sigma_{i=1}^mc_if_i$ is $\mathcal{O}(\Sigma_{i=1}^m|c_i|g_i)$.
\end{enumerate}

Although the ``big $\mathcal{O}$'' notation gives only a very crude connection between the values $f(x)$ and $g(x)$ as $x$ gets large in  $X$, even this can sometimes be useful.  For example, let us write $\lim_{x\rightarrow\infty}f(x)=L$, for some real number $L$, if, for each positive integer $n$, there is a compact subset $C_n$ of $X$ such that the relation  $|f(x)-L|\in[0,1/n)$ holds outside of $C_n$.  Now suppose that $f$ is $\mathcal{O}(g)$ and
$\lim_{x\rightarrow\infty}g(x)=0$. It then follows that $\lim_{x\rightarrow\infty}f(x)=0$.

\vspace{.2in}

A more refined estimation relation, namely that of asymptotic approximation, may be defined as follows.  Let $f$ and $g$ belong to $\mathcal{F}$. Then $f(x)/g(x)$ is defined outside some compact set, and the expression $\lim_{x\rightarrow\infty}(f(x)/g(x))$ makes sense.  We say that $f$ is asymptotic to $g$, written $f\sim g$, provided that $\lim_{x\rightarrow\infty}f(x)/g(x)=1$.
It is easy to check that $\sim$ defines an equivalence relation on $\mathcal{F}$.

Assuming additionally that $g>0$ outside some compact set, it is easy to check that $f\sim g$ implies that $f$ is $\mathcal{O}(g)$.  So $\sim $ is a finer relation than big $\mathcal{O}$.

However, big $\mathcal{O}$\ can be used to obtain $\sim $ under some circumstances. Namely,  choose any $f,g\in \mathcal{F}$, with $g>0$ outside some compact set,  and let $f_1=f+\mathcal{O}(g)$.  Assume that $\lim_{x\rightarrow\infty}(g(x)/f(x))=0$.
It then follows that there exist a positive constant $M$ and a compact set $C\subseteq X$ such that
\[ -Mg/f\leq 1-f_1/f \leq Mg/f\]
outside C. This implies, first, that $f_1\in \mathcal{F}$ and, second, that $f_1\sim f$.

\subsection{The Computational Strategy}\label{SS:strategy}

Although the steps that we use in working toward a proof of Conjecture 1 are mostly of an elementary computational nature, the overall structure of the argument is intricate, and so we give here a brief overview.

\subsubsection{ The numerator $|Tri\cap B_K(R)\cap[-2,2]|$ of (\ref{E:density})}\label{SSS:numerator}

The set $Tri$ appearing in the numerator of (\ref{E:density}) is not computationally easy to work with, so we replace it by a larger set that is more computationally amenable.  In particular, we use the function $f:\R\rightarrow \R$ of \S 4--- $f(x)=x^3-3x$--- and Lemma 1 of \S 4 to conclude that
\[ Tri\cap B_K(R)\cap[-2,2]\subseteq f(K)\cap B_K(R).\]
Therefore,

\begin{equation}\label{E:replacinginequalityfordensity}
\delta_K(R)\leq \frac{|f(K)\cap B_K(R)|}{|B_K(R)\cap[-2,2]|}.
\end{equation}

We shall show that the right hand side of inequality (\ref{E:replacinginequalityfordensity}) goes to zero as $R\rightarrow\infty$, so we do not lose anything by this replacement in our effort to prove Conjecture 1 .

\subsubsection{The key computations for the numerator of (\ref{E:replacinginequalityfordensity})}\label{SSS:keycomputations}

We see from equation (\ref{E:elementsofK}) that $K$ can be identified with the set of all integer $(k+1)$-tuples $(a_1,\ldots,a_k,b)$ such that $b>0$ and $a_1,\ldots,a_k,b$ are relatively prime.  It is not hard to rewrite $f$ using this identification.  $B_K(R)$ has a simple description in this notation, but a description of the  intersection $f(K)\cap B_K(R)$ involves $k+1$ polynomial inequalities in $a_1, a_2, \ldots, a_k,b$, and so it is fairly complex.

When $k$ is small, specifically, when $k\leq 2$, however, we are able to use the inequalities to find a function $S:(0,\infty)\rightarrow (0,\infty)$ such that
\begin{equation}\label{E:S}
S\quad \mbox{is}\quad \mathcal{O}(R^{1/3})
\end{equation}
and
\begin{equation}\label{E:numeratorreplacement}
f(K)\cap B_K(R)\subseteq f(B_K(S)).
\end{equation}

The proof of (\ref{E:numeratorreplacement}) follows a similar pattern for each number field $K$ but details and specific  bounds depend on the degree and discriminant of $K$.  These proofs will occupy all of \S 7.

Using the inequality for the density above, we then get
\begin{eqnarray}\label{E:newdensity}
\delta_K(R)& \leq & \frac{|f(B_K(S))|}{|B_K(R)\cap[-2,2]|}\nonumber\\
& \leq & \frac{|B_K(S)|}{|B_K(R)\cap[-2,2]|}.
\end{eqnarray}

\subsubsection{The denominator $|B_K(R)\cap[-2,2]|$.}\label{SSS:denominator}

The description of $B_K(R)$ leads immediately to an estimate for $|B_K(R)|$.
Indeed, as we shall see in \S 6.1,

\[|B_K(R)| \sim \frac{(2R)^kR}{\zeta(k+1)},\]
where $\zeta$ is the classical Riemann zeta function.  This will follow   by generalizing an argument of Sittinger that proves a theorem of Lehmer (cf. \cite{sit} and \S 5).   This asymptotic relation implies that
\begin{equation}\label{E:estimateB_K(R)}
|B_K(R)| \leq \frac{(2R)^{k+1}}{\zeta(k+1)},
\end{equation}
for sufficiently large $R$.

However, even though the subset $B_K(R)\cap[-2,2]$ is easy to describe in terms of $a_1,\ldots, a_k,b$ and the basis $\mathcal{V}$, its cardinality cannot be easily estimated.  For example, we do not have good information about how the relatively prime, positive integer $k+1$-tuples are  distributed throughout the subset  $\N^{k+1}\subset \R^{k+1}$, where \N\ is the set of natural numbers, so we cannot proceed via some sort of  volume computation.

However, since our goal is a very crude estimation, we can circumvent this problem  by defining a (relatively small) subset

\begin{equation}\label{E:C(R)}
Q(R)\subseteq B_K(R)\cap[-2,2],
\end{equation}
 for which we can prove  that
\begin{equation}\label{E:estimateC(R)}
|Q(R)|\sim \frac{2^kR^{k+1}}{(k+1)^{k+1}||\mathcal{V}||\zeta(k+1)}.
\end{equation}
Here $||\mathcal{V}||$ is a positive constant depending only on the basis $\mathcal{V}$---
in particular, \emph{not} on $R$.  This will occupy \S 6.2.  It follows from (\ref{E:estimateC(R)}) that
\begin{equation}\label{E:lowerboundC(R)}
|Q(R)|\geq \frac{2^{k-1}R^{k+1}}{(k+1)^{k+1}||\mathcal{V}||\zeta(k+1)},
\end{equation}
for sufficiently large $R$.

Therefore, using (\ref{E:newdensity}), (\ref{E:estimateB_K(R)}),  (\ref{E:C(R)}), and (\ref{E:lowerboundC(R)}), we have
\begin{eqnarray}
\delta_K(R)& \leq &  \frac{|B_K(S)|}{|Q(R)|}\nonumber\\
 &\leq & \frac{(2S)^{k+1}(k+1)^{k+1}||\mathcal{V}||}{2^{k-1}R^{k+1}}\nonumber\\
&=&4||\mathcal{V}||\left(\frac{(k+1)S}{R}\right)^{k+1}.
\end{eqnarray}
Since $S$ is $\mathcal{O}(R^{1/3})$, by (\ref{E:S}),

\begin{equation}
\left(\frac{(k+1)S}{R}\right)^{k+1}\quad\mbox{is}\quad\mathcal{O}(R^{-\frac{2}{3}(k+1)}),
\end{equation}
which implies  the following result.

\begin{theorem}\label{T:maintheorem}
 Let $K$ be a real number field of degree  $k\leq 2$.  Then,
\begin{equation}
\delta_K(R) \quad\mbox{is}\quad \mathcal{O}(R^{-\frac{2}{3}(k+1)}).
\end{equation}
\end{theorem}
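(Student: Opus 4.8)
The plan is to carry out the computational strategy sketched in \S\ref{SS:strategy}: bound $\delta_K(R)$ from above by the ratio $|B_K(S)|/|Q(R)|$ for a suitable auxiliary height bound $S=S(R)$ that is $\mathcal{O}(R^{1/3})$, and then show that this ratio is $\mathcal{O}(R^{-\frac{2}{3}(k+1)})$. The argument splits into an upper estimate for a set containing the numerator of (\ref{E:density}) and a lower estimate for its denominator, and the theorem is then obtained by assembling the pieces proved in \S\S 5--7.

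For the numerator I would first replace $\T$ by the image $f(K)$, where $f(x)=x^3-3x$: by the Lemma asserting $\T\cap K\subseteq f(K\cap[-2,2])$, together with $f(K\cap[-2,2])=f(K)\cap[-2,2]$, one gets inequality (\ref{E:replacinginequalityfordensity}). The heart of the matter, occupying \S 7, is then to produce a function $S$ which is $\mathcal{O}(R^{1/3})$ and satisfies the inclusion (\ref{E:numeratorreplacement}), namely $f(K)\cap B_K(R)\subseteq f(B_K(S))$; granting this, monotonicity of cardinality gives $|f(K)\cap B_K(R)|\le |B_K(S)|$. Concretely, writing a general element of $K$ via (\ref{E:elementsofK}) as $(a_1v_1+\cdots+a_kv_k)/b$ with $a_1,\dots,a_k,b$ relatively prime and $b>0$, one puts the value of $f$ at this element into the same normal form; since $f$ is cubic, the numerator coordinates become cubic forms in $a_1,\dots,a_k,b$ over a common denominator dividing $b^3$. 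Requiring the result to lie in $B_K(R)$ then forces each $|a_i|$ and $b$ to be $\mathcal{O}(R^{1/3})$, with implied constants depending on the degree and discriminant of $K$ and on the basis $\mathcal{V}$, which is why the inclusion must be established case by case ($k=1$, and then real quadratic $K$ for the various discriminants).

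For the denominator, rather than estimate $|B_K(R)\cap[-2,2]|$ directly---which would require knowing how the relatively prime tuples distribute inside $\N^{k+1}$---I would use the small subset $Q(R)\subseteq B_K(R)\cap[-2,2]$ of (\ref{E:C(R)}), whose cardinality admits the asymptotic (\ref{E:estimateC(R)}). That asymptotic rests on the generalization of Lehmer's density theorem (Theorem 5.2) and the resulting estimate $|B_K(R)|\sim (2R)^kR/\zeta(k+1)$ of \S 6.1, obtained by a Sittinger-type inclusion--exclusion over the Riemann zeta function; from (\ref{E:estimateC(R)}) one extracts the lower bound (\ref{E:lowerboundC(R)}), valid for large $R$.

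Combining (\ref{E:newdensity}), (\ref{E:estimateB_K(R)}) and (\ref{E:lowerboundC(R)}) then yields
\[
\delta_K(R)\ \le\ \frac{|B_K(S)|}{|Q(R)|}\ \le\ \frac{(2S)^{k+1}(k+1)^{k+1}\,||\mathcal{V}||}{2^{k-1}R^{k+1}}\ =\ 4\,||\mathcal{V}||\left(\frac{(k+1)S}{R}\right)^{k+1},
\]
and since $S$ is $\mathcal{O}(R^{1/3})$ by (\ref{E:S}), the right-hand side is $\mathcal{O}(R^{-\frac{2}{3}(k+1)})$, which is the assertion. I expect the main obstacle to be the inclusion (\ref{E:numeratorreplacement}): translating an upper bound on $h_K$ of a value of $f$ back into an upper bound on the height of the argument is not formal---it requires controlling the cancellation between the cubic numerator forms and $b^3$ after reduction to lowest terms, and the bookkeeping genuinely involves the arithmetic of $K$, so one does not get a uniform argument but must handle $k=1$ and the real quadratic cases (and their discriminants) separately. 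The denominator count is comparatively routine---a direct extension of a known sieve---but still needs the device of passing to $Q(R)$ to sidestep the unknown distribution of coprime tuples and to accommodate the restriction to $[-2,2]$.
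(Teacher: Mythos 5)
Your proposal reproduces the paper's own argument: the same reduction of the numerator via $\T\cap K\subseteq f(K\cap[-2,2])$ and the inclusion $f(K)\cap B_K(R)\subseteq f(B_K(S))$ with $S=\mathcal{O}(R^{1/3})$ established case by case (for $\Q$ directly, and for real quadratic fields by bounding the g.c.d.\ of the numerator coefficients after applying $f$, which is where Lemma 7.1 shows $G\mid 8d$), the same lower bound on the denominator via the auxiliary set $Q(R)$ and the generalized Lehmer--Sittinger count, and the identical final chain of inequalities. You have correctly identified the one genuinely nontrivial step, and your outline matches the paper's proof in both structure and detail.
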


This clearly implies Theorem 1.1.  We state its extension to all real number fields:

\begin{con}: $\delta_K(R)$ is $\mathcal{O}(R^{-\frac{2}{3}(k+1)})$, for any real, degree $k$ extension $K$ of \Q.
\end{con}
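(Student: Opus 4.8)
\vspace{.2in}

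\noindent\textbf{Toward the conjecture.}\quad The plan is to run the computational strategy of \S\ref{SS:strategy} with no change, after observing that every ingredient assembled there is valid for arbitrary degree $k$ \emph{except} the single inclusion $(\ref{E:numeratorreplacement})$. Indeed, the counting estimates $|B_K(R)|\sim(2R)^kR/\zeta(k+1)$ and $|Q(R)|\sim 2^kR^{k+1}/\bigl((k+1)^{k+1}\|\mathcal{V}\|\zeta(k+1)\bigr)$ recorded in \S 6 come from the generalized Lehmer count (Theorem 5.2), which is stated for $k$-tuples of every length, and from the construction of $Q(R)$ --- neither of which restricts $k$ --- so the chain $\delta_K(R)\le|B_K(S)|/|Q(R)|\le 4\|\mathcal{V}\|\bigl((k+1)S/R\bigr)^{k+1}$ and the resulting $\mathcal{O}(R^{-\frac23(k+1)})$ bound go through verbatim once one has a function $S$ satisfying $(\ref{E:S})$ and $(\ref{E:numeratorreplacement})$. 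Thus the whole matter reduces to the following: \emph{for a fixed real number field $K$ of degree $k$ and a fixed basis $\mathcal{V}$, produce $S$ that is $\mathcal{O}(R^{1/3})$ with $f(K)\cap B_K(R)\subseteq f(B_K(S))$.}

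For this I would replace the hand-manipulation of the $k+1$ defining polynomial inequalities --- the step that confines \S 7 to $k\le 2$ --- by a valuation-theoretic argument. Fix $a\in f(K)$ with $h_K(a)\le R$, write $a=(c_1v_1+\cdots+c_kv_k)/b_0$ in lowest terms (so $|c_i|\le R$ and $1\le b_0\le R$), and let $b\in K$ be \emph{any} root of $p(x,a)=x^3-3x-a$; I claim $h_K(b)=\mathcal{O}(R^{1/3})$, with implied constant depending only on $K$ and $\mathcal{V}$. \emph{Denominators.} At every prime $\mathfrak{p}$ of the ring of integers $\mathcal{O}_K$, the relation $b^3=3b+a$ forces $v_{\mathfrak{p}}(b)<0$ exactly when $v_{\mathfrak{p}}(a)<0$, and then $b^3$ has strictly smaller $\mathfrak{p}$-valuation than $3b$, so $v_{\mathfrak{p}}(a)=3v_{\mathfrak{p}}(b)$; hence the denominator ideal of $a$ is \emph{exactly} the cube of the denominator ideal of $b$. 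Consequently the integer denominators satisfy $D^3\le C_K\,b_0\le C_K R$, the constant $C_K$ absorbing only the discrepancy, at the ramified primes of $K$, between triple of a ceiling and ceiling of a triple (plus, if $\mathcal{V}$ is not integral, the bounded index of $\bigoplus\Z v_i$ in $\mathcal{O}_K$), so $D=\mathcal{O}(R^{1/3})$. \emph{Archimedean sizes.} Applying each embedding $\sigma_j:K\hookrightarrow\C$ to $a=(\sum_ic_iv_i)/b_0$ and using invertibility of the matrix $\bigl(\sigma_j(v_i)\bigr)$ gives $|\sigma_j(a)|\le C(\max_i|c_i|)/b_0\le CR/b_0$; since $\sigma_j(b)$ is a root of $x^3-3x-\sigma_j(a)$ and $x^3-3x$ grows cubically, $|\sigma_j(b)|\le 3+2^{1/3}|\sigma_j(a)|^{1/3}=\mathcal{O}\bigl(\max(1,(R/b_0)^{1/3})\bigr)$. \emph{Conclusion.} Recovering the numerator coordinates $a_i$ of $b$ from $Db=\sum_ia_iv_i$ by the same matrix, $|a_i|\le C'D\max_j|\sigma_j(b)|=\mathcal{O}\bigl(b_0^{1/3}\max(1,(R/b_0)^{1/3})\bigr)=\mathcal{O}(R^{1/3})$, using $b_0\le R$; with $D=\mathcal{O}(R^{1/3})$ this gives $h_K(b)=\max(|a_1|,\ldots,|a_k|,D)=\mathcal{O}(R^{1/3})$, which is $(\ref{E:numeratorreplacement})$ for $S(R)$ a constant multiple of $R^{1/3}$, and $(\ref{E:S})$ follows.

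The step I expect to demand the real care is making the denominator estimate \emph{uniform in $k$}: one must certify that $C_K$ --- built from the ramified primes of $K$ and, when $\mathcal{V}$ is not integral, the lattice index --- depends on $K$ and $\mathcal{V}$ alone, and one must verify the bound for \emph{every} root $b\in K$, since $p(x,a)$ may have just one root in $K$ (when it splits as a linear times an irreducible quadratic over $K$); the argument above is deliberately root-agnostic for that reason. A tidier route is to invoke the basis-independence of Appendix B and reduce to the case $\mathcal{V}$ an integral basis of $\mathcal{O}_K$, so that $D$ and the norm of the denominator ideal of $b$ determine one another up to ramification and the denominator step becomes purely ideal-theoretic; with that in hand nothing further is needed, since the counting estimates of \S 5 and \S 6 that drive \S\ref{SS:strategy} already hold for every $k$. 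It is presumably exactly this bookkeeping --- invisible in the two- and three-variable inequalities the author manipulates for $k\le2$ --- that the conjecture asks one to carry out in general.
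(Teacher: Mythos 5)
First, note that the statement you were asked to prove is left as a \emph{conjecture} in the paper: the author establishes the bound only for $k\le 2$ (Theorem 4.1, via the explicit computations of \S 7) and lists the general case as open in \S 8, so there is no proof in the paper to compare against; the comparison below is with the $k\le 2$ argument. Your reduction is exactly right: the Lehmer-type counts of \S 5, the estimates for $|B_K(R)|$ and $|Q(R)|$ in \S 6, and the inequality chain of \S 4.3 are all degree-independent, and the only obstruction to general $k$ is the construction of $S$ with $S=\mathcal{O}(R^{1/3})$ and $f(K)\cap B_K(R)\subseteq f(B_K(S))$. Where the paper obtains this for $k\le 2$ by expanding $f(\alpha)$ in coordinates, bounding the g.c.d.\ of the resulting coefficients by hand (Lemma 7.1, $G\mid 8d$), and running sign-by-sign estimates on the cubic $\Phi_{D,E}$ (Lemma 7.2), you replace both steps by the standard height-functoriality argument for the degree-$3$ map $f$: the identity $v_{\mathfrak{p}}(a)=3v_{\mathfrak{p}}(b)$ at every prime where $b$ has a pole gives $\mathfrak{d}_a=\mathfrak{d}_b^{\,3}$ and hence $D^3\le C_K\,b_0$, with $C_K$ coming only from the ramified primes and the lattice comparison, while the archimedean coordinates of $b$ are recovered from all embeddings at once by inverting $(\sigma_j(v_i))$. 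I checked the two computations that carry the weight --- $3\lceil m/e\rceil\le\lceil 3m/e\rceil+2$ at the finitely many ramified primes, and $|a_i|\le C'D\max_j|\sigma_j(b)|=\mathcal{O}\bigl(b_0^{1/3}(R/b_0)^{1/3}\bigr)=\mathcal{O}(R^{1/3})$ using $b_0\le R$ --- and both are sound; your argument is also correctly root-agnostic, which matters since $p(x,a)$ may have a unique root in $K$. Two points to tighten in a final write-up: the uniformity you need is in $a$ (equivalently in $R$), not in $k$, which is fixed once $K$ is; and if $\mathcal{V}$ is not integral then $\bigoplus\Z v_i$ need not sit inside $\mathcal{O}_K$, so you should speak of commensurability of the two lattices (or, as you suggest, invoke Appendix B to reduce to an integral basis with all $v_i\ge 1$, which exists because $1$ extends to an integral basis). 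With those adjustments your argument does appear to settle Conjecture 2 --- and with it Conjecture 1 --- for all real number fields, which is strictly more than the paper accomplishes.
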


\vspace{.2in}

We now begin the proofs of the above results.
\section{A generalization of Lehmer's Theorem}\label{S:LehmersTheorem}

Choose integers $k$ and $n$, with $k\geq 2$ and $n\geq 1$, and let $Q(k,n)$ be the set of all relatively prime $k$-tuples of positive integers $\leq n$.  As before, we make use of the Riemann zeta function $\zeta$.

\begin{theorem}[D. Lehmer, 1900]\label{T:Lehmer}  Fix the integer $k$.  Then
\[ |Q(k,n)|\sim \frac{n^k}{\zeta(k)}.\]
\end{theorem}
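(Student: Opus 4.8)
The plan is to count the complementary quantity --- the number of $k$-tuples of positive integers $\le n$ whose gcd is divisible by some prime, sorted by that prime --- via an inclusion-exclusion / Möbius inversion over the possible common divisors $d$. First I would write $N(k,n)=n^k$ for the total number of positive integer $k$-tuples with each coordinate $\le n$, and for each positive integer $d$ let $N_d(k,n)$ denote the number of such $k$-tuples all of whose coordinates are divisible by $d$. Since dividing each coordinate by $d$ gives a bijection with $k$-tuples of positive integers $\le n/d$, we have $N_d(k,n)=\lfloor n/d\rfloor^k$. Grouping all $k$-tuples according to $g=\gcd(a_1,\dots,a_k)$ yields the identity $n^k=\sum_{g\ge 1}|Q(k,\lfloor n/g\rfloor)|$, which by Möbius inversion gives
\begin{equation}\label{E:mobiuscount}
|Q(k,n)| = \sum_{d=1}^{n}\mu(d)\left\lfloor \frac{n}{d}\right\rfloor^{k}.
\end{equation}

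Next I would extract the asymptotics from \eqref{E:mobiuscount}. Replacing $\lfloor n/d\rfloor$ by $n/d$ introduces an error of size $O((n/d)^{k-1})$ in each term (expanding $(n/d - \theta_d)^k$ with $0\le \theta_d<1$), so
\[
|Q(k,n)| = n^{k}\sum_{d=1}^{n}\frac{\mu(d)}{d^{k}} + O\!\left(n^{k-1}\sum_{d=1}^{n}\frac{1}{d^{k-1}}\right).
\]
For the main term I use $\sum_{d=1}^{\infty}\mu(d)/d^{k} = 1/\zeta(k)$ (valid since $k\ge 2$), with tail $\sum_{d>n}\mu(d)/d^k = O(n^{-(k-1)})$, so the main term is $n^k/\zeta(k)+O(n)$. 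For the error term, when $k\ge 3$ the sum $\sum 1/d^{k-1}$ converges and the error is $O(n^{k-1})$; when $k=2$ it is $\sum_{d\le n}1/d = O(\log n)$, giving error $O(n\log n)$. In all cases the error is $o(n^k)$, hence $|Q(k,n)|\sim n^k/\zeta(k)$.

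I do not expect any serious obstacle here: the argument is the classical Möbius-inversion proof of Lehmer's density theorem, and the only mildly delicate point is bookkeeping the error terms uniformly across the cases $k=2$ and $k\ge 3$ so that the $\sim$ relation holds for every fixed $k\ge 2$. (Since $k$ is fixed throughout, all implied constants may depend on $k$.) One could alternatively phrase the whole thing probabilistically --- the "probability" that $k$ random integers share a prime factor $p$ is $p^{-k}$, and independence over primes gives $\prod_p(1-p^{-k})=1/\zeta(k)$ --- but for a clean proof the Möbius computation above is the most direct route, and it is exactly the form needed in \S 6 when the argument is generalized (following Sittinger) to the weighted, multi-radius count $|C(k,\mathbf{n})|$ that appears in the denominator of the density.
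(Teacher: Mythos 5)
Your proof is correct and is essentially the same argument the paper uses: the paper itself quotes Sittinger's refinement $(25)$ for Theorem 5.1, but its own proof of the generalization in \S 5.1 is exactly this M\"obius-inversion computation $|Q(k,n)|=\sum_{d}\mu(d)\lfloor n/d\rfloor^k$ followed by the same term-by-term error analysis (tail of $\sum\mu(d)/d^k$ bounded by $\int_n^\infty t^{-k}\,dt$, and the floor-function error summing to $O(n\log n)$ for $k=2$ and $O(n^{k-1})$ for $k>2$). Your version, specialized to equal side lengths, matches the paper's treatment step for step.
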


In more recent work \cite{sit}, B.D. Sittinger has shown that

\begin{equation}\label{E:Sittinger}
|Q(k,n)|= \frac{n^k}{\zeta(k)} + \mathcal{O}(f_k(n)),
\end{equation}
where $f_k(n)=n\ln(n)$, when $k=2$, and $f_k(n)=n^{k-1}$, when $k>2$.
Equation (\ref{E:Sittinger}) immediately implies Theorem \ref{T:Lehmer} (cf. \S 4.2).

We will generalize (\ref{E:Sittinger}) in two ways.  First, we  replace $n$ by a $k$-tuple $\mathbf{n}=(n_1,\ldots, n_k)$, and second, we allow each $n_i$ to be an \emph{arbitrary real} number $\geq 1$.  Let $Q(k,\mathbf{n})$ be the set of all relatively prime $k$-tuples of positive integers $(a_1,\ldots,a_k)$ such that each $a_i\leq n_i$, where $\mathbf{n}$ satisfies the conditions just given.

We think of $Q(k,\mathbf{n})$ as a generalized ``cube'' with sides of length $n_i$.  The set $Q(k,n)$ appearing in (\ref{E:Sittinger}) represents the case in which all side-lengths equal a given positive integer $n$.

It will now be convenient to introduce the notion of \emph{eccentricity} of $Q(k,\mathbf{n})$.
We define this as follows:

\begin{equation}
e(Q(k,\mathbf{n}))= \frac{\max\{n_1,\dots,n_k\}}{\min\{n_1,\dots,n_k\}}.
\end{equation}

Clearly, $e(Q(k,\mathbf{n}))\geq 1$, with equality holding if and only if all $n_i$ are equal.
For any real number $E\geq 1$, let $\mathcal{C}^k_E$ denote the set of all $\mathbf{n}\in \R^k$\, such that each $n_i\geq 1$ and $e(Q(k,\mathbf{n}))\leq E$.  This set inherits a locally-compact topology from $\R^k$; we may call it the \emph{space of $k$-cubes of eccentricity $\leq E$}.

Next, we let $\gamma(\mathbf{n})$ denote the geometric mean of the the $n_i$ comprising $\mathbf{n}$, i.e.,

\begin{equation}
\gamma(\mathbf{n})=(n_1\cdot\ldots\cdot n_k)^{1/k}.
\end{equation}
We use this to define a function $f_k(\mathbf{n})$ as follows:

\begin{equation}
f_k(\mathbf{n})=\left\{\begin{array}
                {r@{\quad:\quad}l}
                \gamma(\mathbf{n})\ln(\gamma(\mathbf{n})) & k=2\\
                \gamma(\mathbf{n})^{k-1} & k>2.
                \end{array}\right.
\end{equation}

Clearly this gives one reasonable way to generalize the definition of the same-named function appearing in (\ref{E:Sittinger}).

We can now state the desired generalization:

\begin{theorem}\label{T:generalizedSittinger}  Fix $k$ and choose a real number $E\geq 1$.  Then, for $\mathbf{n}$ ranging over
$\mathcal{C}^k_E$, we have
\[ |Q(k,\mathbf{n})|=\frac{n_1\cdot\ldots\cdot n_k}{\zeta(k)} + \mathcal{O}(f_k(\mathbf{n})).\]
\end{theorem}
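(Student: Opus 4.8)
The plan is to prove Theorem \ref{T:generalizedSittinger} by a Möbius inversion over the common divisor $d$ of the tuple $(a_1,\ldots,a_k)$, mirroring Sittinger's approach but carrying the $k$ separate bounds $n_i$ through the estimates uniformly over the eccentricity class $\mathcal{C}^k_E$. Let $N(k,\mathbf{n})$ denote the number of \emph{all} positive integer $k$-tuples with $a_i\le n_i$; then $N(k,\mathbf{n})=\prod_{i=1}^k\lfloor n_i\rfloor = n_1\cdots n_k + \mathcal{O}(\gamma(\mathbf{n})^{k-1})$, since each $\lfloor n_i\rfloor = n_i + \mathcal{O}(1)$ and expanding the product the dominant error term is a product of $k-1$ of the $n_i$ with one factor $\mathcal{O}(1)$, which is $\mathcal{O}(\gamma(\mathbf{n})^{k-1})$ once we note that on $\mathcal{C}^k_E$ any such partial product is bounded by $E^{k-1}\gamma(\mathbf{n})^{k-1}$, and $E$ is a fixed constant. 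Grouping tuples by their gcd $d$ gives $N(k,\mathbf{n}) = \sum_{d\ge 1} |Q(k,\mathbf{n}/d)|$ (where $\mathbf{n}/d=(n_1/d,\ldots,n_k/d)$ and the summand is $0$ once $d>\min_i n_i$), so Möbius inversion yields
\[
|Q(k,\mathbf{n})| = \sum_{d=1}^{\lfloor \min_i n_i\rfloor} \mu(d)\, N(k,\mathbf{n}/d).
\]

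Next I would substitute the estimate for $N$ into this sum. Writing $N(k,\mathbf{n}/d) = \frac{n_1\cdots n_k}{d^k} + \mathcal{O}\!\left(\frac{\gamma(\mathbf{n})^{k-1}}{d^{k-1}}\right)$, the main term contributes
\[
n_1\cdots n_k \sum_{d=1}^{M}\frac{\mu(d)}{d^k} = n_1\cdots n_k\left(\frac{1}{\zeta(k)} - \sum_{d>M}\frac{\mu(d)}{d^k}\right) = \frac{n_1\cdots n_k}{\zeta(k)} + \mathcal{O}\!\left(\frac{n_1\cdots n_k}{M^{k-1}}\right),
\]
where $M=\lfloor\min_i n_i\rfloor$, using $\sum_{d>M} d^{-k} = \mathcal{O}(M^{-(k+1)})\cdot M = \mathcal{O}(M^{1-k})$ for $k\ge 2$. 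On $\mathcal{C}^k_E$ we have $\min_i n_i \ge \gamma(\mathbf{n})/E^{(k-1)/k} \ge \gamma(\mathbf{n})/E$, so $M \ge \gamma(\mathbf{n})/E - 1$, hence $n_1\cdots n_k/M^{k-1} = \gamma(\mathbf{n})^k/M^{k-1} = \mathcal{O}(\gamma(\mathbf{n}))$ with the implied constant depending only on $E$ and $k$. Thus the main term is $\frac{n_1\cdots n_k}{\zeta(k)} + \mathcal{O}(\gamma(\mathbf{n}))$, which is absorbed into $\mathcal{O}(f_k(\mathbf{n}))$ in every case since $f_k(\mathbf{n}) \ge \gamma(\mathbf{n})$ for $\gamma(\mathbf{n})$ large.

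The error term of the substitution contributes $\mathcal{O}\!\left(\gamma(\mathbf{n})^{k-1}\sum_{d=1}^{M} d^{-(k-1)}\right)$. For $k>2$ the sum $\sum_{d\ge 1} d^{-(k-1)}$ converges, giving $\mathcal{O}(\gamma(\mathbf{n})^{k-1}) = \mathcal{O}(f_k(\mathbf{n}))$. For $k=2$ the sum is $\sum_{d=1}^{M} d^{-1} = \mathcal{O}(\ln M) = \mathcal{O}(\ln\gamma(\mathbf{n}))$ (using $M\le \gamma(\mathbf{n})^{?}$; more simply $M\le \max_i n_i \le E\,\gamma(\mathbf{n})$, so $\ln M = \mathcal{O}(\ln\gamma(\mathbf{n}))$), giving $\mathcal{O}(\gamma(\mathbf{n})\ln\gamma(\mathbf{n})) = \mathcal{O}(f_2(\mathbf{n}))$. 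Assembling the three pieces proves the theorem. The main obstacle is not any single deep estimate but rather the bookkeeping: one must verify that every implied $\mathcal{O}$-constant depends only on $k$ and $E$ (never on $\mathbf{n}$), which is exactly where the eccentricity hypothesis $\mathbf{n}\in\mathcal{C}^k_E$ is used — it is what lets us convert between $\min_i n_i$, $\max_i n_i$, and $\gamma(\mathbf{n})$ at the cost of a fixed constant, and hence stay within the function-class framework of \S\ref{SS:heightandestimation} where $X=\mathcal{C}^k_E$ is locally compact. I would also remark that setting all $n_i=n$ and $E=1$ recovers equation (\ref{E:Sittinger}), confirming the generalization is faithful.
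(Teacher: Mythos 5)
Your proposal is correct and follows essentially the same route as the paper: the M\"obius-weighted sum $\sum_d\mu(d)\prod_i\lfloor n_i/d\rfloor$ you obtain by gcd-grouping and M\"obius inversion is exactly the paper's consolidated inclusion--exclusion formula, your floor-product estimate $\prod\lfloor x_i\rfloor=\prod x_i+\mathcal{O}(\gamma(\mathbf{x})^{k-1})$ is the paper's Lemma 5.1 and its corollary, and the tail and error-sum estimates (with the $k=2$ versus $k>2$ split) coincide. The only structural difference is that you treat real $\mathbf{n}$ in one pass, whereas the paper first proves the integral case and then reduces the general case to it via $Q(k,[\mathbf{n}])=Q(k,\mathbf{n})$; this is a harmless streamlining, and the one garbled intermediate step (the tail bound should read $\sum_{d>M}d^{-k}\le\int_M^\infty t^{-k}\,dt=\mathcal{O}(M^{1-k})$ directly) does not affect the correct conclusion.
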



An easy computation that follows directly from the definitions shows that
\[ \lim_{\mathbf{n}\rightarrow\infty}(f_k(\mathbf{n})/n_1\cdot\ldots\cdot n_k)=0.\]
According to \S 4.2, Theorem \ref{T:generalizedSittinger} then implies that
\begin{equation}
|C(k,\mathbf{n})|\sim \frac{n_1\cdot\ldots\cdot n_k}{\zeta(k)}.
\end{equation}

The remainder of this section is devoted to a proof of Theorem \ref{T:generalizedSittinger}.

\subsection{The integral case}\label{SS:integralcase}

We  begin by proving an analog of the theorem in which $\mathbf{n}$ ranges over the integral $k$-tuples in $\mathcal{C}_E^k$. That is, $\mathbf{n}$ ranges over $\mathcal{C}_E^k\cap\Z^k$. The proof follows that of Sittinger's proof of (\ref{E:Sittinger}), with modifications to take into account the fact that not all the $n_i$ are equal.

To simplify the notation in the computation, we adopt the following convention: whenever we have a $k$-tuple of reals, say $(z_1,\ldots,z_k)$, we shall write $\pi(z_i)$ to denote the product $z_1\cdot\ldots\cdot z_k$.

Using the inclusion-exclusion principle, we compute

\begin{equation}\label{E:inclusionexclusion}
|Q(k,\mathbf{n})|=\pi(n_i) -\sum_{p_1}\pi([n_i/p_1]) + \sum_{p_1<p_2}\pi([n_i/p_1p_2])-\sum_{p_1<p_2<p_3}\pi([n_i/p_1p_2p_3])+\ldots,
\end{equation}
where $[\quad]$ denotes the greatest integer function and the $p_i$ range over the set of primes. Note that each of the terms $[n_i/p_1\dots p_r]$ is zero when either $r$ is sufficiently large or some $p_i$ is sufficiently
large.  Therefore the expression on the right hand side reduces to a finite sum. As Sittinger does in his special case,
we consolidate (\ref{E:inclusionexclusion}) by using the M\"{o}bius function $\mu$:

\begin{equation}\label{E:consolidatedwithmobius}
|Q(k,\mathbf{n})|= \sum_{j=1}^{\infty}\mu(j)\pi([n_i/j]).
\end{equation}
Clearly the summands for which $j>\min\{n_1\ldots, n_k\}$ all vanish.We now need a lemma to help evaluate the products $\pi([n_i/j])$.

\begin{lemma}\label{L:estimatelemma} For any $\mathbf{x}=(x_1,\ldots,x_k)$ in
$\mathcal{C}^k_E$, set
\[\phi(\mathbf{x})=\pi(x_i)/\min\{x_1,\ldots,x_k \}.\]
Let $\mathbf{y}=\mathbf{y}(\mathbf{x})$ be any function $\mathcal{C}^k_E\rightarrow \mathcal{C}^k_E$ satisfying $|x_i-y_i|\leq 1$ for all $i=1,\ldots,k$.
  Then
\[ \pi(x_i)=\pi(y_i)+\mathcal{O}(\phi(\mathbf{x})).\]
\end{lemma}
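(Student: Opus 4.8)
The plan is to estimate $\pi(x_i)-\pi(y_i)$ directly by a telescoping argument and then package the bound in the big-$\mathcal{O}$ language of \S\ref{SS:heightandestimation}. First I would record the elementary identity obtained by replacing $x_i$ by $y_i$ one index at a time:
\[
\pi(x_i)-\pi(y_i)=\sum_{j=1}^{k}\Big(\prod_{i<j}y_i\Big)(x_j-y_j)\Big(\prod_{i>j}x_i\Big).
\]
This has $k$ summands, and in each summand the factor $x_j-y_j$ appears exactly once while the remaining $k-1$ factors are various $x_i$'s and $y_i$'s with $i\neq j$.

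Next I would bound a single summand. Since $\mathbf{y}=\mathbf{y}(\mathbf{x})$ takes values in $\mathcal{C}^k_E$, every $y_i\geq 1$; combined with $|x_i-y_i|\leq 1$ this gives $1\leq y_i\leq x_i+1\leq 2x_i$ (using $x_i\geq 1$). Hence each of the $k-1$ factors in a summand with index $i\neq j$ is at most $2x_i$, and $|x_j-y_j|\leq 1$, so the $j$-th summand has absolute value at most $2^{k-1}\prod_{i\neq j}x_i$. Now $\prod_{i\neq j}x_i=\pi(x_i)/x_j$ and $x_j\geq\min\{x_1,\dots,x_k\}$, so $\prod_{i\neq j}x_i\leq \pi(x_i)/\min\{x_1,\dots,x_k\}=\phi(\mathbf{x})$. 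Summing over $j$ yields
\[
|\pi(x_i)-\pi(y_i)|\leq k\,2^{k-1}\,\phi(\mathbf{x})\qquad\text{for every }\mathbf{x}\in\mathcal{C}^k_E.
\]

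To conclude, I would note that $\phi(\mathbf{x})\geq 1>0$ throughout $\mathcal{C}^k_E$ (because $\pi(x_i)\geq\min\{x_1,\dots,x_k\}$, all $x_i$ being $\geq 1$), and that the functions $\mathbf{x}\mapsto\pi(x_i)$ and $\mathbf{x}\mapsto\pi(y_i(\mathbf{x}))$ are both $\geq 1$ everywhere, hence lie in $\mathcal{F}$. The displayed inequality then says precisely that $\pi(x_i)-\pi(y_i)$ is $\mathcal{O}(\phi(\mathbf{x}))$ in the sense of \S\ref{SS:heightandestimation}, i.e. $\pi(x_i)=\pi(y_i)+\mathcal{O}(\phi(\mathbf{x}))$, as claimed.

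There is no serious obstacle here: the computation is entirely elementary. The two points deserving care are getting the telescoping identity right so that $|x_j-y_j|\leq 1$ is applied to exactly one factor per term, and the observation $\prod_{i\neq j}x_i\leq\phi(\mathbf{x})$, which is exactly what the definition of $\phi$ is designed to accommodate. I would also remark that the eccentricity bound $E$ is not used in this estimate at all; it is present only because the ambient space in the intended application (\S\ref{SS:integralcase}) is $\mathcal{C}^k_E$.
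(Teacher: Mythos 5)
Your proof is correct and follows essentially the same route as the paper: the identical telescoping identity for $\pi(x_i)-\pi(y_i)$, followed by the observation that each product of $k-1$ of the $x_i$'s is at most $\phi(\mathbf{x})$. The only difference is that you bound $y_i\leq 1+x_i\leq 2x_i$ directly to get the constant $k2^{k-1}$, whereas the paper expands $(1+x_1)\cdots(1+x_{h-1})$ via elementary symmetric functions to get $2^k-1$; both are fine for the stated big-$\mathcal{O}$ conclusion.
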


We give a proof at the end of this section.
\begin{cor}\label{C:corollarytoestimatelemma} Let $\mathbf{x}$ and $\mathbf{y}$ be as in Lemma \ref{L:estimatelemma}.
Then,
\[\pi(x_i)=\pi(y_i)+\mathcal{O}(\gamma(\mathbf{x})^{k-1}).\]
\end{cor}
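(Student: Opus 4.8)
The plan is to deduce Corollary~\ref{C:corollarytoestimatelemma} from Lemma~\ref{L:estimatelemma} purely by comparing the two error terms $\phi(\mathbf{x})$ and $\gamma(\mathbf{x})^{k-1}$ on the space $\mathcal{C}^k_E$. Since Lemma~\ref{L:estimatelemma} already gives $\pi(x_i)=\pi(y_i)+\mathcal{O}(\phi(\mathbf{x}))$, it suffices by the monotonicity property (a) of big~$\mathcal{O}$ (stated in \S\ref{SS:heightandestimation}) to show that $\phi(\mathbf{x})$ is itself $\mathcal{O}(\gamma(\mathbf{x})^{k-1})$ as $\mathbf{x}$ ranges over $\mathcal{C}^k_E$; that is, that the ratio $\phi(\mathbf{x})/\gamma(\mathbf{x})^{k-1}$ is bounded outside a compact subset of $\mathcal{C}^k_E$.

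First I would write $m=\min\{x_1,\ldots,x_k\}$ and $M=\max\{x_1,\ldots,x_k\}$, so that by definition $\phi(\mathbf{x})=\pi(x_i)/m$ and $\gamma(\mathbf{x})^k=\pi(x_i)$. Then
\[
\frac{\phi(\mathbf{x})}{\gamma(\mathbf{x})^{k-1}}=\frac{\pi(x_i)/m}{\pi(x_i)/\gamma(\mathbf{x})}=\frac{\gamma(\mathbf{x})}{m}.
\]
Now the geometric mean always satisfies $\gamma(\mathbf{x})\leq M$, and the eccentricity bound on $\mathcal{C}^k_E$ says precisely that $M/m=e(Q(k,\mathbf{x}))\leq E$. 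Hence $\gamma(\mathbf{x})/m\leq M/m\leq E$ for \emph{every} $\mathbf{x}\in\mathcal{C}^k_E$, so in fact the ratio is bounded by the constant $E$ on the whole space, not merely outside a compact set. Therefore $\phi(\mathbf{x})$ is $\mathcal{O}(\gamma(\mathbf{x})^{k-1})$, and combining with Lemma~\ref{L:estimatelemma} via transitivity of the $\mathcal{O}$-relation gives the claim.

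There is essentially no obstacle here — the corollary is a one-line consequence of the lemma together with the eccentricity hypothesis, and the only thing to be slightly careful about is that the functions in question lie in the class $\mathcal{F}$ on $X=\mathcal{C}^k_E$ (which holds because each is continuous and positive, hence has empty, a fortiori compactly-closed, zero set) so that the formal machinery of \S\ref{SS:heightandestimation} applies. The substantive work is all in Lemma~\ref{L:estimatelemma} itself, whose proof is deferred to the end of the section.
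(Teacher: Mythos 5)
Your argument is correct and is essentially the paper's own proof in rearranged form: both reduce the corollary to the bound $\phi(\mathbf{x})\leq E\,\gamma(\mathbf{x})^{k-1}$, obtained from $\gamma(\mathbf{x})\leq\max\{x_i\}$ together with the eccentricity hypothesis $\max\{x_i\}/\min\{x_i\}\leq E$. Nothing further is needed.
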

\begin{proof}Choose any $\mathbf{y}$ as in Lemma \ref{L:estimatelemma}.  The lemma implies that
\begin{equation}\label{E:estimatelemmaequation}
\frac{|\pi(x_i)-\pi(y_i)|}{\phi(\mathbf{x})}
\end{equation}
is bounded for all $\mathbf{x}\in \mathcal{C}^k_E$. Now, $\max\{x_1,\ldots,x_k\}/E\geq \gamma(\mathbf{x})/E$.  Therefore, by our eccentricity assumption, $\min\{x_1,\ldots,x_k\}\geq \max\{x_1,\ldots,x_k\}/E\geq\gamma(\mathbf{x})/E$. It follows that $\phi(\mathbf{x})\leq E\pi(x_i)/\gamma(\mathbf{x})=E\gamma(\mathbf{x})^{k-1}$.  Combining this with (\ref{E:estimatelemmaequation},) we conclude that
\[ \frac{|\pi(x_i)-\pi(y_i)|}{\gamma(\mathbf{x})^{k-1}}\]
is bounded, as desired.
 \end{proof}

We now return to our proof of Theorem \ref{T:generalizedSittinger} by applying Corollary \ref{C:corollarytoestimatelemma} to the case in which
\[ x_i=\frac{n_i}{j}\quad\mbox{and}\quad y_i=\left[\frac{n_i}{j}\right],\]
assuming that $j\leq\min\{n_1,\ldots,n_k\}$.
Note that
\[\gamma(n_1/j,\ldots,n_k/j)=\gamma(n_1,\ldots,n_k)/j.\]
Therefore, \ref{C:corollarytoestimatelemma} yields
\begin{equation}
\pi([n_i/j])=\pi(n_i/j) +\mathcal{O}(\gamma(\mathbf{n})^{k-1}/j^{k-1}).
\end{equation}

Feeding this into equation (\ref{E:consolidatedwithmobius}) (and recalling that we may assume that $j\leq\min\{n_1,\ldots,n_k\}$ and that the M\"{o}bius function $\mu$ assumes only the values $0$ and $\pm 1$ ), we get

\begin{eqnarray}\label{E:evalC(k,n)1}
|Q(k,\mathbf{n})| & = & \sum_{j=1}^m\mu(j)\pi(n_i)/j^k + \sum_{j=1}^{m}\mu(j)(\pi([n_i/j])-\pi(n_i/j))\nonumber\\
& = & \sum_{j=1}^m\mu(j)\pi(n_i)/j^k + \mathcal{O}(\sum_{j=1}^m|\mu(j)|(\gamma(\mathbf{n})^{k-1}/j^{k-1})\nonumber\\
& = & \pi(n_i)\sum_{j=1}^m\mu(j)/j^k + \mathcal{O}(\gamma(\mathbf{n})^{k-1}\sum_{j=1}^m1/j^{k-1}),
\end{eqnarray}
where $m=\min\{n_1,\ldots,n_k\}$.

Next, Sittinger observes that, by the definition of the zeta function,
\[ \sum_{j=1}^m\mu(j)/j^k= 1/\zeta(k) - \sum_{j=m+1}^{\infty}\mu(j)/j^k,\]
with the tail dominated by
\[ \int_m^{\infty}\frac{dt}{t^k}=\frac{1}{(k-1)m^{k-1}}.\]
(Recall that throughout this section, we are assuming that $k\geq 2$.)

Therefore, setting $M=\max\{n_1,\ldots,n_k\}$,

\begin{eqnarray}\label{E:evalC(k,n)2}
\pi(n_i)\sum_{j=m+1}^{\infty}\mu(j)/j^k & \leq & \frac{m\pi(n_i)}{(k-1)m^k}\nonumber\\
&\leq & \frac{m\pi(n_i)E^k}{(k-1)M^k}\nonumber\\
&\leq & \frac{E^km}{k-1}\nonumber\\
&\leq & \frac{E^k}{k-1}\cdot\gamma(\mathbf{n}).
\end{eqnarray}

Next,  we compute
\[ \mathcal{O}(\gamma(\mathbf{n})^{k-1}\sum_{j=1}^m1/j^{k-1})\]
by observing that
\[\sum_{j=1}^m\frac{1}{j^{k-1}}\;\;\mbox{is\;dominated\;by}\;1+\int_1^m\frac{dt}{t^{k-1}},\]
which is
\[ \left\{\begin{array}{l} \mathcal{O}(\ln(\gamma(\mathbf{n})),\quad k=2,\\
                            \mathcal{O}(1),\quad k> 2.
                            \end{array}\right . \]

Therefore,
\begin{equation}\label{E:evalC(k,n)3}
\mathcal{O}(\gamma(\mathbf{n})^{k-1}\sum_{j=1}^m1/j^{k-1})\quad\mbox{is}\quad
\left\{\begin{array}{l}  \mathcal{O}(\gamma(\mathbf{n})\ln(\gamma(\mathbf{n})),\quad k=2,\\
                            \mathcal{O}(\gamma(\mathbf{n})^{k-1}),\quad k> 2.
                            \end{array}\right .
\end{equation}

Combining (\ref{E:evalC(k,n)1}), (\ref{E:evalC(k,n)2}), and (\ref{E:evalC(k,n)3}), we obtained the desired result when $\mathbf{n}$ ranges over $\mathcal{C}_E^k\cap\Z^k$.

Our next task is to derive from this the result for general $\mathbf{n}$.

\subsection{ The case of general real $k$-tuple $\mathbf{n}$}\label{SS:generalcase}

Choose any $\mathbf{n}$ in $\mathcal{C}_{E}^k$ and set
\[[\mathbf{n}]=([n_1],\ldots,[n_k]).\]
It is not hard to check that $[\mathbf{n}]\in \mathcal{C}_{2E}^k\cap\Z^k$.

By what was proved in the integral case,
\[|Q(k,\mathbf{m})| = \frac{\pi(m_i)}{\zeta(k)} + \mathcal{O}(f_k(\mathbf{m})),\]
for $\mathbf{m}$ ranging over $\mathcal{C}_{2E}^k\cap\Z^k.$

It will be convenient to reformulate this as follows:  \emph{There exists a constant $H$, depending only on $E$ and $k$, such that}
\begin{equation}\label{E:integralcase}
|\;|Q(k,\mathbf{m})|- \frac{\pi(m_i)}{\zeta(k)}| \leq H\cdot f_k(\mathbf{m}),
\end{equation}
\emph{for $\mathbf{m}$ ranging over $\mathcal{C}_{2E}^k\cap\Z^k$}.

Clearly, $[n_i]\leq n_i$, so that $f_k([\mathbf{n}])\leq f_k(\mathbf{n})$. Further, it is immediate from the definition that  $Q(k,[\mathbf{n}])=Q(k,\mathbf{n})$.   Therefore,  replacing $\mathbf{m}$ by $[\mathbf{n}]$ in   inequality  (\ref{E:integralcase})  and using the foregoing observations, we get
\begin{equation}\label{E:nearlydone}
|\;|Q(k,\mathbf{n})|- \frac{\pi([n_i])}{\zeta(k)}| \leq  H\cdot f_k(\mathbf{n}).
\end{equation}

Finally, by Corollary \ref{C:corollarytoestimatelemma}, there is a constant $H'$, depending only on $k$, such that
\[ |\pi(n_i) - \pi([n_i])|\leq H'\cdot\gamma(\mathbf{n})^{k-1}.\]

Dividing this last inequality by $\zeta(k)$, adding the result   to (\ref{E:nearlydone}) and using the definition of $f_k(\mathbf{n})$, it follows that

\[|\;|Q(k,\mathbf{n})|- \frac{\pi(n_i)}{\zeta(k)}|\leq H''\cdot f_k(\mathbf{n}),\]
where $H''$ is a constant depending only on $E$ and $k$.  This translates to the desired statement involving big $\mathcal{O}$.

\vspace{.1in}
It remains to prove Lemma \ref{L:estimatelemma}.

\subsection{Proof of Lemma \ref{L:estimatelemma}}\label{SS:proofofestimatelemma}

We begin with the  identity
\begin{equation}\label{E:generalidentity}
\pi(x_i)-\pi(y_i)=\sum_{h=1}^k(x_h-y_h)y_1\cdot\ldots\cdot y_{h-1}x_{h+1}\cdot\ldots\cdot x_{k},
\end{equation}
which can be proved by induction on $k$ or by a simple algebraic manipulation.
Equation (\ref{E:generalidentity}) holds for all elements $x_i, y_j$ in any  commutative ring.

Next, since the hypothesis of the lemma states that $|x_i-y_i|\leq 1$, for all $i$,  (\ref{E:generalidentity}) implies that
\begin{equation}
|\pi(x_i)-\pi(y_i)|\leq \sum_{h=1}^k(1+x_1)\cdot\ldots\cdot(1+x_{h-1})\cdot x_{h+1}\cdot\ldots\cdot x_k.
\end{equation}

We may write
\[(1+x_1)\cdot\ldots\cdot(1+x_{h-1})= 1+\sigma_1^{h-1}(x_1,\dots,x_{h-1})+\ldots+\sigma_{h-1}^{h-1}(x_1,\dots,x_{h-1}),\]
where $\sigma_a^{h-1}$ is the $a^{th}$ elementary symmetric function in $h-1$ variables.
Therefore,
\[ |\pi(x_i)-\pi(y_i)|\leq \sum_{h=1}^k\sum_{a=0}^{h-1}\sigma_a^{h-1}(x_1,\dots,x_{h-1})x_{h+1}\cdot\ldots\cdot x_k.\]
Now $\sigma_a^{h-1}$ is the sum of all products of $a$ distinct unknowns selected from the $h-1$
unknowns.  So
\[ \sigma_a^{h-1}(x_1,\dots,x_{h-1})x_{h+1}\cdot\ldots\cdot x_k \]
consists of ${h-1\choose a}$ terms, each of the form
\begin{equation}\label{E:typicalterm}
x_{i_1}\cdot\ldots\cdot x_{i_a}\cdot x_{h+1}\cdot\ldots\cdot x_k.
\end{equation}
Let $\{j_1,\ldots,j_b\}$ denote the complement of $\{i_1,\ldots,i_a\}$ in $\{1,2,\ldots, h-1\}$, where $a+b=h-1$.  Then, we may rewrite expression (\ref{E:typicalterm}) as
\begin{equation}\label{E:typicaltermrewritten}
\frac{x_1\cdot\ldots\cdot x_{h-1}\cdot x_{h+1}\cdot\ldots\cdot x_k}{x_{j_1}\cdot\ldots\cdot x_{j_b}}.
\end{equation}

Since each $x_i\geq 1$, the expression in (\ref{E:typicaltermrewritten}) is $\leq \phi(\mathbf{x})$.  Therefore,
\begin{eqnarray*}
|\pi(x_i)-\pi(y_i)| & \leq & \sum_{h=1}^k\sum_{a=0}^{h-1}{h-1\choose a}\phi(\mathbf{x})\\
& = &\phi(\mathbf{x})\left(\sum_{h=1}^k2^{h-1}\right)\\
& = & (2^k-1)\phi(\mathbf{x}).
\end{eqnarray*}

This immediately implies Lemma \ref{L:estimatelemma} and, with it, completes the proof of
Theorem \ref{T:generalizedSittinger}.

\section{Bounds on the numerators and denominators of the density estimate}

\subsection{Estimating the cardinality of $B_K(R)$}\label{SS:estimating|B_K(R)|}\mbox{}\\

Recall that $B_K(R)$ consists of all relatively prime integer $(k+1)$-tuples $(a_1,\dots,a_k,b)$ such that each $|a_i|\leq R$ and $0<b\leq R$.  In this section, we obtain the following estimate:

\begin{equation}\label{E:estimateofB_K(R)}
|B_K(R)| = \frac{2^kR^{k+1}}{\zeta(k+1)} +  \mathcal{O}(f_{k+1}(R)).
\end{equation}

Since
\[\lim_{R\rightarrow\infty}\frac{f_{k+1}(R)}{R^{k+1}}=0,\]
it follows that (cf. the last paragraph in \S {\ref{SS:heightandestimation})
\begin{equation}
|B_K(R)|\sim \frac{2^kR^{k+1}}{\zeta(k+1)}.
\end{equation}

Our derivation of the estimate (\ref{E:estimateofB_K(R)}) is based on Theorem~\ref{T:generalizedSittinger}.  However, that theorem refers only to tuples whose entries are positive integers.  So, we must see how to include zero and negative entries into our count.  To do this we first introduce some extra notation.

\vspace{.1in}

Let $I$ and $J$ be disjoint subsets of $\{1,\ldots,k\}$.

\vspace{.1in}

Define
\begin{equation}
B_K(R;I,J;k) = \left\{(a_1,\ldots,a_k,b)\in B_K(R):\begin{array}{ccl}
 a_i  <  0 & \Leftrightarrow & i\in I\\
 a_i = 0 & \Leftrightarrow & i\in J\\
 a_i > 0 & \Leftrightarrow & \mbox{otherwise}.
\end{array}\right\}.
\end{equation}
\vspace{.1in}

In the notation of \S \ref{S:LehmersTheorem},
\[ B_K(R;\emptyset,\emptyset;k)=Q(k+1, \mathbf{R}),\]
where $\mathbf{R}$ is the $(k+1)$-tuple $(R,\ldots,R)$.

Therefore, according to Theorem~\ref{T:generalizedSittinger},
\begin{equation}\label{E:basicboxestimate}
|B_K(R;\emptyset,\emptyset;k)|=\frac{R^{k+1}}{\zeta(k+1)}+ \mathcal{O}(f_{k+1}(\mathbf{R})),
\end{equation}
where
\begin{equation}
f_{k+1}(\mathbf{R})= \left\{\begin{array}{cc}
                                R\ln(R), & k=1\\
                                R^k, & k>1.
                                \end{array}\right.
\end{equation}

For each subset $I$, the set $B_K(R;I,\emptyset;k)$ consists entirely of relatively prime \mbox{$(k+1)$-tuples} $(a_1,\ldots,a_k,b)$ for which all the entries are non-zero.  Since changing the sign of one or more of the $a_i$'s does not affect their absolute values or divisibility properties, such sign changes can be used to
define a bijection between any two of the $B_K(R;I,\emptyset;k)$'s.  Of course, they are all pairwise disjoint.  So, using equation~(\ref{E:basicboxestimate}), we obtain
\begin{equation}\label{E:coreestimate}
|\;\bigsqcup_IB_K(R;I,\emptyset;k)|=\frac{2^kR^{k+1}}{\zeta(k+1)}+\mathcal{O}(f_{k+1}(\mathbf{R}),
\end{equation}
where $\bigsqcup$ denotes the disjoint sum.

\vspace{.1in}

The remaining $(k+1)$-tuples in $B_K(R)$ consist of those for which some $a_i$'s are zero. These all sit in ``lower dimensional'' cubes, and so their contribution gets absorbed by the big $\mathcal{O}$\ notation. We make this precise as follows.

Suppose first that $k>1$, $0<m<k$, and $J$ is a subset of $\{1,\ldots,k\}$ of cardinality $m$.  Then delete the elements of $J$ from $\{1,\ldots,k\}$, and renumber the remaining numbers, in order, using $\{1,\ldots,k-m\}$.  Given any $I$ disjoint from $J$ as before, renumber it using the renumbering just obtained.  This produces a subset $I'$ of \\$\{1,\ldots,k-m\}$.  These operations on indices determine a bijection between $B_K(R;I,J;k)$ and $B_K(R;I',\emptyset;k-m)$.  Therefore, for any fixed, non-empty $J$ of cardinality $m$, equation (\ref{E:coreestimate}) implies that

\begin{equation}~\label{E:bigsqcupB_K}
|\;\bigsqcup_IB_K(R;I,J;k)|=
\frac{2^{k-m}R^{k-m+1}}{\zeta(k-m+1)}+\mathcal{O}(f_{k-m+1}(\mathbf{R}).
\end{equation}

\vspace{.1in}

If $J^{\ast}$ is any \underline{other} non-empty subset of $\{1,\ldots,k\}$, then $\bigsqcup_IB_K(R;I,J;k)$ and $\bigsqcup_{I^{\ast}}B_K(R;I^{\ast},J^{\ast};k)$ are disjoint.  They have the same cardinality when $|J|=|J^{\ast}|$. Therefore, letting $J$ range over all non-empty \emph{proper} subsets of $\{1,\ldots,k\}$, we have

\begin{equation}~\label{E:doublebigsqcup} |\bigsqcup_{J\neq\emptyset}\bigsqcup_IB_K(R;I,J;k)| = \sum_{|J|=m=1}^{k-1}\left({k\choose m}\frac{2^{k-m}R^{k-m+1}}{\zeta(k-m+1)}+ \mathcal{O}(f_{k-m+1}(\mathbf{R})\right).
\end{equation}

We leave to the reader the check that the expression on the right is $\mathcal{O}(f_{k+1}(\mathbf{R}))$.

Now consider the case $m=k$.  Then $J=\{1,\ldots, k\}$, and
the only possible set $I$ is the empty set.  In this case the left-hand side of  equation (\ref{E:bigsqcupB_K}) reduces to $|B_K(R;\emptyset,J)|$. But the cube  $B_K(R;\emptyset,J)$  is just the singleton set consisting of $(0,\ldots,0,1)$, and so, allowing the case $J=\{1,\ldots,k\}$ in the expression on the left-hand side of (\ref{E:doublebigsqcup}), we still get that its cardinality is  $\mathcal{O}(f_{k+1}(\mathbf{R}))$

A similar special argument applies to the case $k=1$, which we leave to the reader.
\vspace{.1in}

We can now conclude: Since $B_K(R)$ is precisely the disjoint union of $\;\bigsqcup_IB_K(R;I,\emptyset;k)$ and $\bigsqcup_{J\neq\emptyset}\bigsqcup_IB_K(R;I,J;k)$, estimate (\ref{E:estimateofB_K(R)}) follows immediately.

\vspace{.2in}

\subsection{Defining Q(R) and estimating its cardinality}\label{SS:estimating|C(R)|}

We recall from Section~\ref{S:densityoverview} that the definition of density as well as all the related concepts and computations began with a choice of basis $\mathcal{V}=\{v_1,\ldots,v_k\}$ of the field $K$ over \Q. We  made  no assumptions about $\mathcal{V}$. It will now be convenient, for notational and computational simplicity, to make the assumption that each real number $v_i$ is $\geq 1$ (cf. Appendix B).  Set $||\mathcal{V}||=\pi(v_i)\; (=v_1\cdot\ldots\cdot v_k)$.

Define $\mathbf{m}$ and $\mathbf{n}$ in $\R^{k+1}$ as follows:
\begin{eqnarray*}
m_i  =  n_i & = & \frac{2R}{(k+1)v_i},\quad i=1,\ldots,k\\
m_{k+1} & = & \frac{k}{k+1}R,\\
n_{k+1} & = & R.
\end{eqnarray*}

Then the set $Q(R)$ is defined to be the set difference
\begin{equation}\label{E:definitionofC(R)}
Q(R)=Q(k+1,\mathbf{n})\setminus Q(k+1,\mathbf{m}),
\end{equation}
where we use the ``cubes'' defined in Section~\ref{S:LehmersTheorem}. Since $Q(k+1,\mathbf{m})\subseteq Q(k+1,\mathbf{n})$, we have
\begin{equation}
|Q(R)|=|Q(k+1,\mathbf{n})|- |Q(k+1,\mathbf{m})|.
\end{equation}

According to Theorem~\ref{T:generalizedSittinger} and the definition of $Q(k+1,\mathbf{n})$,
\begin{equation}\label{E:$n$}
|Q(k+1,\mathbf{n})|= \frac{2^kR^{k+1}}{(k+1)^k||\mathcal{V}||\zeta(k+1))}+\mathcal{O}(f_{k+1}(\mathbf{n})).
\end{equation}

Applying Theorem~\ref{T:generalizedSittinger} to $|Q(k,\mathbf{m})|$, and using the fact that $f_{k+1}(\mathbf{m})\leq f_{k+1}(\mathbf{n})$, we get, similarly, that
\begin{equation}\label{E:$m$}
|Q(k+1,\mathbf{m})|= \frac{k2^kR^{k+1}}{(k+1)^{k+1}||\mathcal{V}||\zeta(k+1)}+\mathcal{O}(f_{k+1})(\mathbf{n}),
\end{equation}
where we think of $\mathbf{n}$ as a function of $\mathbf{m}$.

Therefore, combining (\ref{E:$n$}) and (\ref{E:$m$}),
\begin{eqnarray*}
    |Q(R)| & = & \frac{2^kR^{k+1}}{(k+1)^k||\mathcal{V}||\zeta(k+1)}-\frac{k2^kR^{k+1}}
    {(k+1)^{k+1}||\mathcal{V}||\zeta(k+1)}+\mathcal{O}(f_{k+1}(\mathbf{n}))\nonumber\\
 & = & \frac{2^kR^{k+1}}{(k+1)^{k+1}||\mathcal{V}||\zeta(k+1)}+\mathcal{O}(f_{k+1}(\mathbf{n})).
 \end{eqnarray*}
An easy computation shows that the geometric mean $\gamma(\mathbf{n})$ is given by
\[ \gamma(\mathbf{n})=R\cdot \left(\frac{2^k}{(k+1)^k||\mathcal{V}||}\right)^{\frac{1}{k+1}},\]

so that, setting $D$ equal to the coefficient of $R$ in this expression, we get
\[f_{k+1}(\mathbf{n})=\left\{\begin{array}{ll}
                    DR\ln(DR),& k=1\\
                    D^kR^k,& k>1.
                    \end{array}\right.\]

Therefore, we obtain
\begin{equation}\label{E:|C(R)|}
|Q(R)|=\frac{2^kR^{k+1}}{(k+1)^{k+1}||\mathcal{V}||\zeta(k+1)}+ \mathcal{O}(F_{k+1}(R)),
\end{equation}
where, here, $F_{k+1}(R)$ is obtained from $f_{k+1}(\mathbf{n})$ above by deleting all reference to the
constant factor $D$.

Again, as before, we obtain from the above big $\mathcal{O}$ relation the corresponding asymptotic relation

\begin{equation}\label{E:asymptoticestfor|C(R)|} |Q(R)|\sim\frac{2^kR^{k+1}}{(k+1)^{k+1}||\mathcal{V}||\zeta(k+1)}.
\end{equation}

\vspace{.2in}

\subsection{Using $|Q(R)|$ as a lower bound for $|B_K(R)\cap[-2,2]|$}

\begin{lemma}\label{L:C(R)inB} $Q(R)\subseteq B_K(R)\cap[-2,2]$. Hence, $|Q(R)|\leq |B_K(R)\cap[-2,2]|$
\begin{proof}  Referring to the defining equation for $Q(R)$ (equation (\ref{E:definitionofC(R)})),
we note that since $Q(k+1,\mathbf{n})$ is a subset of $B_K(R)$, by construction, we need only check that $(a_1,\ldots,a_k,b)$ in $Q(R)$ satisfies
\[-2\leq \frac{a_1v_1+\ldots+a_kv_k}{b}\leq 2.\]
Moreover, since all the terms in the middle expression are positive, it remains only to verify the right-hand inequality.

Choose any $(a_1,\ldots,a_k,b)$ in $Q(R)$.  Then, by construction,
\[    0  <  a_i  \leq  \frac{2R}{(k+1)v_i},\quad i=1,\ldots,k\]
and
\[    \frac{k}{k+1}R  <  b  \leq  R.\]
Therefore,
\[0 < \frac{a_1v_1+\ldots+a_kv_k}{b}\leq \frac{\frac{2k}{k+1}R}{\frac{k}{k+1}R}= 2,\]
as desired.\end{proof}
\end{lemma}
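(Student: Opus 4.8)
The plan is to unwind the definition $Q(R)=Q(k+1,\mathbf{n})\setminus Q(k+1,\mathbf{m})$ and to read off, from the inequalities that membership in $Q(R)$ forces on a tuple $(a_1,\ldots,a_k,b)$, the two defining conditions of $B_K(R)\cap[-2,2]$.

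First I would dispose of the ``$B_K(R)$'' half by checking $Q(k+1,\mathbf{n})\subseteq B_K(R)$. A tuple counted by $Q(k+1,\mathbf{n})$ is, by definition, a relatively prime $(k+1)$-tuple of positive integers with $a_i\leq n_i$ and $b\leq n_{k+1}=R$. Since each $v_i\geq 1$, we have $n_i=2R/((k+1)v_i)\leq 2R/(k+1)\leq R$, so $|a_i|=a_i\leq R$ and $0<b\leq R$; together with relative primality this is exactly the condition defining $B_K(R)$. Hence $Q(R)\subseteq Q(k+1,\mathbf{n})\subseteq B_K(R)$.

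For the ``$[-2,2]$'' half, fix $(a_1,\ldots,a_k,b)\in Q(R)$ and look at the associated number $x=(a_1v_1+\cdots+a_kv_k)/b\in K$. All of $a_i,v_i,b$ are positive, so $x>0$ and in particular $x>-2$; only the bound $x\leq 2$ needs work. The point to exploit is that $m_i=n_i$ for $i=1,\ldots,k$, so the conditions $a_i\leq m_i$ and $a_i\leq n_i$ coincide. Consequently, since $(a_1,\ldots,a_k,b)$ lies in $Q(k+1,\mathbf{n})$ but not in $Q(k+1,\mathbf{m})$, it can only fail the defining inequalities of $Q(k+1,\mathbf{m})$ through the last coordinate, i.e. $b>m_{k+1}=\tfrac{k}{k+1}R$. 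Feeding this, together with $a_i\leq 2R/((k+1)v_i)$ and $b\leq R$, into the expression for $x$ gives
\[
0<x=\frac{a_1v_1+\cdots+a_kv_k}{b}\leq\frac{\sum_{i=1}^{k}\frac{2R}{(k+1)v_i}\,v_i}{\frac{k}{k+1}R}=\frac{\frac{2k}{k+1}R}{\frac{k}{k+1}R}=2,
\]
so $x\in[-2,2]$. This establishes $Q(R)\subseteq B_K(R)\cap[-2,2]$, and since the latter set is finite, the cardinality inequality $|Q(R)|\leq|B_K(R)\cap[-2,2]|$ is immediate.

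There is no genuinely hard step here; the one thing not to miss is the observation that $m_i=n_i$ for $i\leq k$, which is exactly what channels the failure of membership in $Q(k+1,\mathbf{m})$ into the single coordinate $b$ and hence yields the lower bound $b>\tfrac{k}{k+1}R$ that drives the final estimate. Everything else is a single arithmetic inequality.
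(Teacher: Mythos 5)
Your proof is correct and follows essentially the same route as the paper's: reduce to the upper bound $x\leq 2$, extract the lower bound $b>\tfrac{k}{k+1}R$ from the definition of $Q(R)$ as a set difference, and run the one-line estimate. You are merely more explicit than the paper about two points it leaves as "by construction" --- that $Q(k+1,\mathbf{n})\subseteq B_K(R)$ and that, since $m_i=n_i$ for $i\leq k$, exclusion from $Q(k+1,\mathbf{m})$ can only occur in the last coordinate --- which is a harmless and arguably welcome expansion.
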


\vspace{.1in}

We may now use Lemma~\ref{L:C(R)inB}, together with the asymptotic estimate (\ref{E:asymptoticestfor|C(R)|}), to get a lower bound for $|B_K(R)\cap[-2,2]|$.  In particular, choose any $\epsilon\in (0,1)$. Then,

\[|B_K(R)\cap[-2,2]|\geq\frac{2^{k-\epsilon}R^{k+1}}
{(k+1)^{k+1}||\mathcal{V}||\zeta(k+1)},\]

for $R$ sufficiently large.  This is clearly a vast underestimate in general, but it will do for our purposes.

\section{Proof of Theorem \ref{T:maintheorem}}

Recall that Theorem~\ref{T:maintheorem} asserts that
\[ \delta_K(R)\quad\mbox{is}\quad \mathcal{O}(R^{-\frac{2}{3}(k+1)})\]
when $K$ is a real field of degree $k\leq 2$. As shown in Section~\ref{S:densityoverview}, in the presence of the estimates in the preceding section, this follows from the existence of a function
\[ S:(0,\infty)\rightarrow (0,\infty),\]
such that

\begin{enumerate}
\item $S$ is $\mathcal{O}(R^{\frac{1}{3}})$, and
\item $f(K)\cap B_K(R)\subseteq f(B_K(S))$.
\end{enumerate}
Recall that $f$ here is the polynomial function given by $f(x)= x^3-3x.$

\vspace{.1in}

In this section we construct such a function $S$. In general, $S$ will depend on $K$,  although the basic form and idea of the construction will be the same for each $K$.

\vspace{.1in}

Note that a typical element in the left-hand set in b) above is of the form $f(\alpha)$ such that the height
$h_K(f(\alpha))$ is $\leq R$.  In order to gain usable information from this fact, we must be able to compute this height or some bound on the height in terms of the data supplied by $\alpha$.  The problem we initially face is that, for any $\beta\in K$,  $h_K(\beta)$ is defined in terms of a canonical representation of $\beta$ in terms of the selected basis $\mathcal{V}$ of $K$ (cf. (2)).  The $ k+1$ integers appearing in this representation are assumed to be relatively prime.  However, although this is what we may assume for the  integers appearing in the representation of $\alpha$, when we apply $f$ to this representation and expand to get the result into the appropriate form, the  integer coefficients we get need not be relatively prime.  Our first task, therefore, is to obtain a bound on the greatest common divisor of these coefficients.

\subsection{Bounding the greatest common divisor}\label{SS:boundingthegcd}

When $K=\Q$, there is no problem.  For if we choose $a/b\in\Q$, where $a$ and $b$ are relatively prime integers and $b>0$, then $f(a/b)=(a^3-3b^2a)/b^3$, and it is easy to check that numerator and denominator are relatively prime.  So we now turn to real fields of degree $2$.

\vspace{.1in}

Real quadratic fields are known to be of the form $\Q(\sqrt{d})$, where $d$ is any positive, square-free integer. In this case, we choose the basis $\mathcal{V}$ to be the set $\{1, \sqrt{d}\}$.  $\mathcal{V}$ consists of integral elements of $K$, but we do not use this fact.  Every $\alpha$ in $K$ may be written uniquely as
\begin{equation}\label{E:integralrep}
\alpha=\frac{a_1+a_2\sqrt{d}}{b},
\end{equation}
where $a_1,a_2, b$ are relatively prime integers and $b>0$ (cf. (\ref{E:elementsofK})).  Now apply $f$ to (\ref{E:integralrep}) to obtain
\begin{equation}\label{E:applyfandexpand}
f(\alpha)=\alpha^3-3\alpha=\frac{(a_1^3+3da_1a_2^2-3a_1b^2)+(3a_1^2a_2+da_2^3-3a_2b^2)\sqrt{d}}{b^3}.
\end{equation}
In this subsection, we write the long expression as
\[ \frac{A_1+A_2\sqrt{d}}{B}\]
to simplify notation.  Often, we shall use the triple $(A_1,A_2,B)$ instead of this fraction.
Let $G$ denote the greatest common divisor (g.c.d.) of $A_1, A_2, B$.

Using this notation, we can express  the height $h_K(f(\alpha))$ as follows:
\begin{equation}{\label{E:height}}
 h_K(f(\alpha))=\frac{\max(|A_1|,|A_2|,B)}{G}.
\end{equation}

\vspace{.2in}

\begin{lemma}\label{L:gcdbound}
$G|8d$
\end{lemma}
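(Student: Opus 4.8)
We have $A_1 = a_1^3 + 3da_1a_2^2 - 3a_1b^2$, $A_2 = 3a_1^2a_2 + da_2^3 - 3a_2b^2$, and $B = b^3$, where $\gcd(a_1, a_2, b) = 1$. We want to bound $G = \gcd(A_1, A_2, B)$ by $8d$. The key structural fact is that $B = b^3$, so $G \mid b^3$; thus every prime $\ell$ dividing $G$ also divides $b$, and we must control the $\ell$-adic valuation of $G$.

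**Approach.** The plan is to work one prime $\ell \mid G$ at a time and show $v_\ell(G) \le v_\ell(8d)$, from which $G \mid 8d$ follows. Fix such $\ell$; then $\ell \mid b$, so since $\gcd(a_1, a_2, b) = 1$, $\ell$ cannot divide both $a_1$ and $a_2$. Working modulo a suitable power of $\ell$, note that $A_1 = a_1(a_1^2 + 3da_2^2 - 3b^2)$ and $A_2 = a_2(3a_1^2 + da_2^2 - 3b^2)$. Set $e = v_\ell(b) \ge 1$; since $v_\ell(B) = 3e$, the quantity $v_\ell(G)$ is at most $\min(v_\ell(A_1), v_\ell(A_2), 3e)$. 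First I would handle the generic case $\ell \nmid 2d$: then $\ell \nmid 3$ forces... actually I would argue as follows. Suppose WLOG $\ell \nmid a_1$. Then $v_\ell(A_1) = v_\ell(a_1^2 + 3da_2^2 - 3b^2)$. Since $\ell \mid b$ we have $a_1^2 + 3da_2^2 - 3b^2 \equiv a_1^2 + 3da_2^2 \pmod{\ell}$. If $\ell \mid a_2$ as well then this is $\equiv a_1^2 \not\equiv 0$, so $v_\ell(A_1) = 0$ and $v_\ell(G) = 0$. So assume $\ell \nmid a_2$ too; then we need $\ell \mid a_1^2 + 3da_2^2 - 3b^2$, and the main task is to bound how high a power of $\ell$ can simultaneously divide $A_1$ and $A_2$.

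**Key computation.** The trick is to eliminate $b^2$ between the two cofactors: let $u = a_1^2 + 3da_2^2 - 3b^2$ and $w = 3a_1^2 + da_2^2 - 3b^2$, so $A_1 = a_1 u$ and $A_2 = a_2 w$. Then $u - w = -2a_1^2 + 2da_2^2 = -2(a_1^2 - da_2^2)$, and $3u - w = 8da_2^2 - 6b^2$, while $u - 3w = -8a_1^2 + \dots$ — more usefully, $3w - u = 8a_1^2 - 8b^2 + \dots$; let me instead take the combination killing $b^2$: $u - w = 2(da_2^2 - a_1^2)$. If $\ell \mid G$ and $\ell \nmid a_1 a_2$ (the remaining case, from above), then $\ell^{v_\ell(G)}$ divides both $u$ and $w$, hence divides $u - w = 2(da_2^2 - a_1^2)$. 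Also, from $A_1, A_2, B$ all divisible by $\ell^{v_\ell(G)}$ and $v_\ell(B) = 3e$, one gets that $\ell^{\min(v_\ell(G), \dots)}$ divides $3b^2$, so combining $u \equiv -3b^2$ and the relation with $a_1^2 - da_2^2$ one shows $\ell^{v_\ell(G)} \mid 2(a_1^2 - da_2^2)$ and also (pushing through the second symmetric combination $3u - w = 8da_2^2 - 6b^2$, using $\ell \mid b^2$ to degree $2e \ge v_\ell(G)$ when $v_\ell(G) \le 2e$, else separately) that $\ell^{v_\ell(G)} \mid 8da_2^2$; since $\ell \nmid a_2$, this gives $v_\ell(G) \le v_\ell(8d)$. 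The case $v_\ell(G) > 2e$ needs a little care, but then $3e \ge v_\ell(G) > 2e$ pins $e$ and one checks directly via $v_\ell(u) = v_\ell(3b^2) $ combined with $v_\ell(u) \ge v_\ell(G) > 2e \ge 2 = v_\ell(b^2)\cdot(\text{...})$ — actually $v_\ell(3b^2) = v_\ell(3) + 2e$, forcing $\ell = 3$, which is subsumed in the $\ell \mid 2d$ analysis only if $3 \mid d$; the genuinely separate subcase $\ell = 3 \nmid d$ I would dispose of by the same elimination identities, noting $v_3$ of the relevant combinations is small.

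**Main obstacle.** The delicate part is keeping track of $\ell$-adic valuations when $v_\ell(b)$ is large: the crude bound $v_\ell(G) \le 3v_\ell(b)$ is far too weak, and one must genuinely use the two cofactor identities to force $v_\ell(G) \le v_\ell(8d)$ uniformly in $e = v_\ell(b)$. I expect the cleanest route is: show $\ell^{v_\ell(G)}$ divides each of $a_1 u$, $a_2 w$, and $b^3$; from the first two (with $\ell \nmid a_1$ or $\ell \nmid a_2$) deduce $\ell^{v_\ell(G)}$ divides $u$ or $w$ respectively; use $u + $ a multiple of $w$ (whichever is available) to eliminate both $a_i^2$ and reduce to $\ell^{v_\ell(G)} \mid 8d \cdot(\text{unit or } b\text{-power})$; and finally absorb the small primes $\ell \in \{2, 3\}$ and $\ell \mid d$ by direct bookkeeping, the bound $8d$ being exactly large enough to cover $v_2 \le 3$ and $v_\ell(d) + (\text{contribution from } 8)$ in each case. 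This last absorption — verifying that the constant is precisely $8d$ and not something larger — is where I would be most careful, tracking the factor $8 = 2^3$ as coming from the coefficient $-3b^2$ differences and the factor $d$ from the $3da_2^2$ term.
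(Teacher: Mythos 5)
Your core computation is the same as the paper's: factor $A_1=a_1u$, $A_2=a_2w$ with $u=a_1^2+3da_2^2-3b^2$ and $w=3a_1^2+da_2^2-3b^2$, use $G\mid b^3$ to force $\ell\mid b$ and hence $\ell\nmid a_1a_2$ in the main case, and take the combinations $3w-u=8a_1^2-6b^2$ and $3u-w=8da_2^2-6b^2$. But there are two genuine gaps. First, the reduction ``WLOG $\ell\nmid a_1$'' is not legitimate: $A_1$ and $A_2$ are not symmetric in $a_1$ and $a_2$ because of the coefficient $d$, and the case you discard, namely $\ell\mid a_1$ (hence $\ell\nmid a_2$), is exactly where the factor $d$ in the bound enters. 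In that case both $3a_1^2$ and $3b^2$ are divisible by $\ell^2$, so $w\equiv da_2^2\pmod{\ell^2}$ and $v_\ell(G)\le v_\ell(A_2)=v_\ell(w)=v_\ell(d)\le 1$ --- and this last inequality uses the squarefreeness of $d$, which your proposal never invokes. Your treatment of the mirror subcase ($\ell\nmid a_1$, $\ell\mid a_2$, giving $v_\ell(G)=0$) is correct but does not cover this one.

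Second, your handling of the residual $-6b^2$ terms is left unresolved; the digression about $v_\ell(G)>2e$ forcing $\ell=3$ does not lead anywhere. The clean bookkeeping is: for odd $\ell$ one has $v_\ell(6b^2)\ge 2e\ge 2>0=v_\ell(8a_1^2)$, so $v_\ell(3w-u)=0$ and hence $v_\ell(G)=0$ (this disposes of $\ell=3$ with no special argument); for $\ell=2$, either $e\ge 2$, in which case $v_2(6b^2)=1+2e\ge 5>3=v_2(8a_1^2)$ and $v_2(G)\le v_2(3w-u)=3$, or $e=1$, in which case the combination alone only yields $v_2(8a_1^2-6b^2)=4$ and you must fall back on the trivial bound $v_2(G)\le v_2(b^3)=3$. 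With these two repairs your argument closes and is essentially the paper's proof, which packages the same computation as a pair of contradiction arguments (assuming $p^2\mid G$ when $p\mid a_1$, and assuming $2^4\mid G$ when $2\nmid a_1$).
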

\begin{proof}
Let $p$ be a prime dividing $G$, and suppose that $p|a_1$.  Since $p|B$, we know that $p|b$, and so we cannot also have $p|a_2$.  Therefore, using $p|A_2$, we have $p|3a_1^2+da_2^2-3b^2$.  This implies $p|d$.

Next, suppose that $p^2|G$ and also $p|a_1$.  Then, since $p^2$ divides $3a_1^2a_2-3a_2b^2$ as well as $A_2$, we have $p^2|da_2^3$.  We still cannot have $p |a_2$ from the above argument, so $p^2|d$, which contradicts the fact that $d$ is squarefree.

Therefore, any common prime factor $p$ of $G$ and $a_1$ must be a factor of $d$ and occurs only to the first power in $G$.

Now suppose that a prime $p$ divides $G$ but $p$ does not divide $a_1$.  In this case, $p$ divides $A_1/a_1=a_1^2+3da_2^2-3b^2$ and also $b$, so $p$ cannot divide $a_2$.  This implies that $p$ divides $A_2/a_2= 3a_1^2+da_2^2-3b^2$.  Hence $p$ divides both
\[a_1^2+3da_2^2\]
and
\[3a_1^2+da_2^2,\]
which implies that $p|8a_1^2$, hence $p|8$.  Therefore, in this case $p=2$.

Still sticking to the case $p|G$ and $p\not\mid a_1$ (so $p=2$), suppose that $2^4|G$.  Since $a_1$ and $a_2$ are both odd in this case, and odd numbers represent invertible elements in the ring of integers $\mod 16$, we may divide $A_1$ by $a_1$ and $A_2$ by $a_2$\ in that ring to obtain congruences
\[\begin{array}{ccc}
    a_1^2+3da_2^2-3b^2 & \equiv & 0\mod 16\\
    3a_1^2+da_2^2-3b^2 & \equiv & 0\mod 16.
    \end{array}\]
By our assumption on $G$, we have $16|B=b^3$, which implies that $4|b$, hence $b^2\equiv 0\mod{16}$.  Therefore,
the above equations become
\[\begin{array}{ccc}
    a_1^2+3da_2^2 & \equiv & 0\mod 16\\
    3a_1^2+da_2^2 & \equiv & 0\mod 16.
    \end{array}\]
Subtracting the first of these from three times the second, we get
\[8a_1^2\equiv 0\mod 16,\]
a contradiction since $a_1$ is odd.

Therefore, the highest power of $2$ dividing $G$ is $\leq2^3$.

The result is now immediate.
\end{proof}

Applying the lemma to equation (\ref{E:height}), we get

\begin{cor}\label{C:lowerboundheight} \[h_K(f(\alpha))\geq \frac{\max(|A_1|,|A_2|,B)}{8d}.\]
\end{cor}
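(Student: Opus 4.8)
The plan is to read the inequality straight off the exact height formula, using Lemma~\ref{L:gcdbound} as the only real input. First I would recall equation~(\ref{E:height}), which gives the height of $f(\alpha)$ \emph{exactly} as
\[ h_K(f(\alpha)) = \frac{\max(|A_1|,|A_2|,B)}{G}, \]
where $(A_1,A_2,B)$ is the integer triple obtained by expanding $f(\alpha)$ in the basis $\{1,\sqrt{d}\}$ and clearing denominators as in~(\ref{E:applyfandexpand}), and $G=\gcd(A_1,A_2,B)$.

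Next I would invoke Lemma~\ref{L:gcdbound}, which states $G\mid 8d$. Since $B=b^3\geq 1$, the gcd $G$ is a positive integer, and being a positive divisor of the positive integer $8d$ it satisfies $1\leq G\leq 8d$. The numerator $\max(|A_1|,|A_2|,B)$ is itself positive, indeed $\geq b^3\geq 1$, so enlarging the denominator from $G$ to $8d$ can only shrink the quotient:
\[ h_K(f(\alpha)) = \frac{\max(|A_1|,|A_2|,B)}{G} \;\geq\; \frac{\max(|A_1|,|A_2|,B)}{8d}, \]
which is precisely the assertion of the corollary.

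I do not expect any real obstacle here: all the substantive work is already contained in Lemma~\ref{L:gcdbound}, and the corollary is a one-line consequence of it together with~(\ref{E:height}). The only points worth flagging in the write-up are (i) that we are working in the real quadratic case, where both~(\ref{E:height}) and Lemma~\ref{L:gcdbound} apply, the case $K=\Q$ having been handled separately (and there in fact $G=1$, so an even stronger bound holds), and (ii) that the direction of the inequality depends on the numerator being nonnegative, which is immediate from $B=b^3>0$.
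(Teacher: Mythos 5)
Your proposal is correct and matches the paper's own argument exactly: the corollary is stated there as an immediate application of Lemma~\ref{L:gcdbound} (which gives $G\mid 8d$, hence $G\leq 8d$) to the height formula~(\ref{E:height}). Your added remarks about positivity of the numerator and the restriction to the quadratic case are harmless elaborations of the same one-line deduction.
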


\vspace{.2in}

\subsection{A certain cubic curve}

The estimates that we want to make to conclude the proof of Theorem~\ref{T:maintheorem}  all involve features of a certain  cubic function:
\[\Phi_{D,E}(x)=D(x^3-3E^2x),\]
where $D$ and $E$ are positive real parameters.

\begin{lemma}\label{L:cubiccurve}
Choose any real $T>0$ and suppose that $E\leq T^{1/3}$.  If $x\geq T^{1/3}+E$, then $\Phi_{D,E}(x)>DT$.

Therefore, making use of the contrapositive,  $\Phi_{D,E}(x)\leq DT\quad\Rightarrow\quad x\leq 2T^{1/3}.$
\end{lemma}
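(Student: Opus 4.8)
The plan is to divide through by the positive constant $D$ and reduce the claim to the inequality $x^3 - 3E^2x > T$ for all $x \geq T^{1/3}+E$. Writing $u = T^{1/3}$ for brevity, so that the hypothesis becomes $E \leq u$ and the conclusion to be proved is $g(x) := x^3 - 3E^2 x > u^3$ for $x \geq u+E$, I would first observe that $g'(x) = 3(x^2-E^2)$ is non-negative for $x \geq E$, so $g$ is non-decreasing on $[E,\infty)$. Since $u>0$ (as $T>0$), any $x$ with $x \geq u+E$ satisfies $x \geq E$, and hence it suffices to verify the inequality at the single endpoint $x = u+E$, i.e. to show $g(u+E) > u^3$.

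Next I would expand $g(u+E) = (u+E)^3 - 3E^2(u+E) = (u+E)\bigl((u+E)^2 - 3E^2\bigr)$; a short computation gives $g(u+E) = u^3 + 3u^2E - 2E^3 = u^3 + E(3u^2 - 2E^2)$. Since $E>0$ and, by hypothesis, $E \leq u$, we have $3u^2 - 2E^2 \geq 3u^2 - 2u^2 = u^2 > 0$, so $g(u+E) > u^3 = T$. Multiplying back by $D$ gives $\Phi_{D,E}(x) \geq \Phi_{D,E}(u+E) = D\,g(u+E) > DT$, which is the first assertion.

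For the contrapositive statement, I would simply negate the first assertion: if $\Phi_{D,E}(x) \leq DT$, then $x < T^{1/3}+E$ (the case $x \le 0$ being trivial, since $2T^{1/3}>0$), and the hypothesis $E \leq T^{1/3}$ then yields $x < T^{1/3}+E \leq 2T^{1/3}$, so in particular $x \leq 2T^{1/3}$.

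There is no real obstacle here: the only points requiring any care are the monotonicity reduction — which uses $u+E \geq E$, hence $T>0$ — and the strict positivity of $3u^2 - 2E^2$, which is precisely where the hypothesis $E \leq T^{1/3}$ enters. Everything else is the routine cubic expansion, which I would not belabor.
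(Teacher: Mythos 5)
Your proof is correct; the paper itself omits the argument, calling it ``an exercise in elementary calculus,'' and your write-up is precisely that exercise carried out: monotonicity of $g(x)=x^3-3E^2x$ on $[E,\infty)$ via the derivative, reduction to the endpoint $x=T^{1/3}+E$, and the expansion $g(T^{1/3}+E)=T+E(3T^{2/3}-2E^2)>T$ using $E\leq T^{1/3}$. The contrapositive step is also handled correctly (the aside about $x\le 0$ is unnecessary, since the contrapositive already yields $x<T^{1/3}+E\leq 2T^{1/3}$ for all $x$, but it does no harm).
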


The proof is an exercise in elementary calculus and so will be omitted.

Note that since $\Phi_{D,E}$ is an odd function of $x$,  Lemma~\ref{L:cubiccurve} implies that for $T$ and
$E$ as in the lemma,
\[ \Phi_{D,E}(x)\geq -DT\quad\Rightarrow\quad x\geq -2T^{1/3}.\]

\subsection{Constructing S}

We continue with the notation of Section~\ref{SS:boundingthegcd}

\subsubsection{The case K=\Q}.  Recall that, for $a/b\in\Q$, $a,b$ relatively prime and $b>0$, we have
\[f(a/b)=\frac{a^3-3b^2a}{b^3}=\Phi_{1,b}(a)/b^3.\]
Therefore, applying Lemma~\ref{L:cubiccurve}(c), with $T=R$, $D=1$, and $E=B$, we may conclude that if
$\Phi_{1,b}(a)\leq R$ and $b\leq R^{1/3}$, then $a\leq 2R^{1/3}$.  Similarly, by the remark following the lemma,
if $\Phi_{1,b}(a)\geq -R$ and $b\leq R^{1/3}$, then $a\geq -2R^{1/3}$.  Using the fact that the numerator and denominator in the above expression for $f(a/b)$ are relatively prime, we may interpret the foregoing as saying that
\[f(a/b)\in B_K(R)\Rightarrow a/b\in B_K(2R^{1/3}).\]
Now apply $f$ to the right hand side of this implication to conclude that
\[f(K)\cap B_K(R)\subseteq f(B_K(2R^{1/3})).\]

\vspace{.1in}

This argument shows that we may define the desired function $S$ by
\[ S(R)=2R^{1/3},\]
thus concluding the proof of Theorem~\ref{T:maintheorem} in the case $K=\Q$.

\subsubsection{The case of real quadratic fields}

We refer to Section~\ref{SS:boundingthegcd} and particularly to expression (\ref{E:applyfandexpand}) to point to the notation that we shall be using here.

\vspace{.1in}

\noindent\textbf{(a)} We assume throughout this part that $B\leq 8dR$ so that $b=B^{1/3}\leq (8dR)^{1/3}$, where $R$ is an arbitrary positive real as before. (Here $8dR$ will correspond to the real number $T$ appearing in Lemma~\ref{L:cubiccurve} and $b$ will correspond to the parameter E.)

Suppose now that $a_1\geq (8dR)^{1/3}+b$.  In particular, $a_1$ is positive, so we have  inequality

\begin{equation*}
A_1=a_1^3+3da_1a_2^2-3a_1b^2\geq a_1^3-3b^2a_1=\Phi_{1,b}(a_1).
\end{equation*}

Using Lemma~\ref{L:cubiccurve}, we get the further inequality

\begin{equation*}
\Phi_{1,b}(a_1)>8dR.
\end{equation*}

Therefore, (always assuming $B\leq 8dR$), we get the implication
\[a_1\geq (8dR)^{1/3}+b\quad\Rightarrow\quad A_1 > 8dR.\]

Using the contrapositive version of this,  we conclude that  we have the implication
\begin{equation}
A_1=a_1^3+3da_1a_2^2-3a_1b^2\leq 8dR\quad\Rightarrow\quad a_1\leq(8dR)^{1/3}+b \leq 2(8dR)^{1/3}.
\end{equation}

Similarly, using the fact that $\Phi_{1,b}(X)$ is an odd function of $x$, as mentioned after Lemma~\ref{L:cubiccurve}, we may also conclude that when $b\leq (8dR)^{1/3}$,
we have the implication
\begin{equation}
A_1=a_1^3+3da_1a_2^2-3a_1b^2\geq -8dR\quad\Rightarrow\quad a_1\geq -2(8dR)^{1/3}.
\end{equation}

Consolidating these, we get
\[\left.\begin{array}{ccc}\label{E:A1boundimplies}
B & \leq & 8dR\\
|A_1| & \leq & 8dR
\end{array}\right\}\quad\Rightarrow\quad |a_1|\leq 2(8dR)^{1/3}.\]

\vspace{.1in}

\noindent\textbf{(b)}\quad We now apply a similar argument to
\[A_2=3a_1^2+da_2^3-3a_2b^2.\]
We assume throughout this part that $B\leq 8R$. (Here $8R$ will correspond to the real number $T$
appearing in Lemma~\ref{L:cubiccurve} and $c=b/\sqrt{d}$ will correspond to the parameter $E$.)

Suppose that $a_2\geq (8R)^{1/3}+c$.
Of course $a_2$ is positive, so that we get the inequality
\[A_2=3a_1^2+da_2^3-3a_2b^2> da_2^3-3a_2b^2=d(a_2^3-3c^2a_2)=\Phi_{d,c}(a_2),\]
We can now apply Lemma~\ref{L:cubiccurve} again to  get the further inequality
\[\Phi_{d,c}(a_2)> 8dR.\]
Thus, as above, we get an implication
\[a_2\geq (8R)^{1/3}+c\quad\Rightarrow\quad A_2>8dR.\]
A similar argument yields, in the presence of the assumption $B\leq 8dR$,
\[a_2\leq -(8R)^{1/3}-c\quad\Rightarrow\quad A_2<-8dR.\]
Combining these two implications and passing to contrapositives, we get
\[\left.\begin{array}{ccc}\label{E:A2boundimplies}
B & \leq & 8dR\\
|A_2| & \leq & 8dR
\end{array}\right\}\quad\Rightarrow\quad |a_2|\leq 2(8R)^{1/3}<2(8dR)^{1/3}.\]

\vspace{.1in}

\noindent\textbf{(c)}\quad We now wrap things up and finish the proof of Theorem~\ref{T:maintheorem}.

\vspace{.1in}

Suppose that $f(\alpha)\in B_K(R)$. That is, $h_K(f(\alpha))\leq R$.  By equation (\ref{E:height})
and Corollary \ref{C:lowerboundheight}, we get
\[\frac{\max(|A_1|,|A_2|,B)}{8d}\leq\frac{\max(|A_1|,|A_2|,B)}{G}=h_K(f(\alpha))\leq R,\]
so that
\[\max(|A_1|,|A_2|,B)\leq 8dR.\]
The conclusions in (a) and (b) above imply that $h_K(\alpha)=\max(|a_1|,|a_2|,b)\leq 2(8dR)^{1/3}$.
Therefore,  $\alpha\in B_K(2(8dR)^{1/3})$, implying that $f(\alpha)\in f(B_K(2(8dR)^{1/3}))$, i.e.,
$f(K)\cap B_K(R)\subseteq f(B_K(2(8dR)^{1/3}))$.

The desired conclusion now follows by defining the function $S=S(R)$ by
\[ S(R)= 2(8dR)^{1/3}.\]
This completes the proof of Theorem~\ref{T:maintheorem}.

\section{Some further comments}

Several further directions are possible for the inquiry begun by this paper.

\vspace{.1in}

For example,
one could attempt to prove Conjecture 1 for other real number fields $K$ or for all of them. And one could attempt to prove the sharper Conjecture 2 for fields of low degree.  There is also
the global question, presumably substantially harder, of obtaining the density of $Tri$ in $\A\cap\R$.

\vspace{.1in}

Another kind of problem would be to improve the estimates given in this paper, even for fields $K$ of
small degree. If one follows the arguments given here, it seems that what is required is further
information on the ``geometric'' distribution of relatively prime $k$-tuples of integers. For example, I do not know whether the distribution is uniform throughout $\R^k$.  Perhaps analytic number theorists have looked at this, but I do not know of any such results.

\vspace{.1in}

Along another line, we mentioned briefly that the set of  angles that are both constructible and trisectable forms a countable subgroup of the circle group.  It would be interesting to obtain further information about this group.

\vspace{.1in}

Finally, one might attempt to solve similar estimation problems for $p$-sectability, either for various
specific primes $p$ or for primes $p$ in general. (See Appendix A below for some preliminary results in this direction.)  Since the key trigonometric equations are much
more complicated for  $p\geq 3$, other techniques are probably required.

\vspace{.3in}

\begin{appendix}

\section{n-sectability of angles}\label{S:n-sectability}

  Let $n$ be any positive integer.  The reader may naturally wonder what $n$-fold subdivisions are achievable for all angles via Euclidean ruler and compass construction. Of course, when  $n$ has the form $2^k$, such subdivisions are always possible via iterated bisection.  The case $n=3$ was settled by P-L. Wantzel, as we have discussed in the main body of this paper.  The case of general $n$ is settled by the following theorem:

\begin{theorem}  Suppose $n$ is a positive integer such that, for any angle $\alpha$,  there exists a Euclidean  construction that starts with  $\alpha$ and produces $\alpha/n$, i.e.,  suppose that every $\alpha$ is $n$-sectable.   Then, $n$ has the form $2^k$, for some non-negative integer $k$.
\end{theorem}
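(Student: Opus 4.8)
The plan is to suppose $n$ is \emph{not} a power of $2$ and to produce a single angle that fails to be $n$-sectable, contradicting the hypothesis.

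First I would set up the algebra of $n$-section in parallel with \S 2.2. Put $a=2\cos\alpha$ and $x=2\cos(\alpha/n)$. Iterating the identity $2\cos((m+1)\theta)=(2\cos\theta)(2\cos m\theta)-2\cos((m-1)\theta)$, starting from $g_0=2$, $g_1=X$, produces a monic polynomial $g_n\in\Z[X]$ of degree $n$ with $g_n(2\cos\theta)=2\cos(n\theta)$ for all $\theta$; for $n=3$ this is exactly $X^3-3X$, matching $p(X,a)$. In particular $g_n(x)=a$, so $x$ is a root of $g_n(X)-a\in\Q(a)[X]$ and the minimal polynomial of $x$ over $\Q(a)$ divides $g_n(X)-a$. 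Exactly as in \S 2.1--2.2, if $\alpha$ is $n$-sectable then the point $\alpha/n=(\cos(\alpha/n),\sin(\alpha/n))$ is constructible over $\{(0,0),(1,0),\alpha\}$, hence $\cos(\alpha/n)$, and so $x$, is constructible over $\Q(\cos\alpha,\sin\alpha)$; since $\sin\alpha$ generates an extension of $\Q(\cos\alpha)$ of degree $\le 2$, one extra quadratic step at the bottom of the tower shows that $x$ is constructible over $\Q(\cos\alpha)=\Q(a)$, so by Theorem~\ref{T:fundamentalcriterionforconstructibility} the degree of $x$ over $\Q(a)$ must be a power of $2$. (Only this easy implication is needed.)

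Next I would pin that degree down to be exactly $n$ for a suitable $\alpha$. Choose $\alpha$ with $\cos\alpha$ transcendental; such angles exist, indeed form an uncountable set, by the reasoning of Corollary~\ref{C:cardinalityoftrisectibleangles}. Then $\Q(a)\cong\Q(t)$ for an indeterminate $t$, and it suffices to check that $g_n(X)-t$ is irreducible over $\Q(t)$. Viewed inside $\Q[X,t]=\Q[X][t]$ this polynomial is \emph{linear} in $t$, namely $-t+g_n(X)$, with $\Q[X]$-coefficients $-1$ and $g_n(X)$; since $-1$ is a unit it is irreducible in $\Q[X][t]$, and since its leading $X$-coefficient is $1$ it is primitive over $\Q[t]$, so Gauss's Lemma gives irreducibility in $\Q(t)[X]$. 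This is the same mechanism used for Lemma~\ref{L:indeterminateparameter} and Corollary~\ref{C:transcendentalimpliesirreducible}, now with exponent $n$ in place of $3$. Consequently $g_n(X)-a$ is irreducible over $\Q(a)$, it \emph{is} the minimal polynomial of $x=2\cos(\alpha/n)$, and $[\Q(a)(x):\Q(a)]=n$.

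Finally I would combine the two bounds: for this $\alpha$, the degree of $2\cos(\alpha/n)$ over $\Q(2\cos\alpha)$ equals $n$, which — since $n$ is not a power of $2$ — contradicts the conclusion of the second paragraph that this degree must be a power of $2$ whenever $\alpha$ is $n$-sectable. Hence $\alpha$ is not $n$-sectable, contradicting the hypothesis that every angle is $n$-sectable; therefore $n=2^k$ for some non-negative integer $k$. The only steps needing any care are the construction of $g_n$ and the bookkeeping in the implication ``$n$-sectable $\Rightarrow$ degree a power of $2$'' (in particular that passing from $\Q(\cos\alpha,\sin\alpha)$ back to $\Q(\cos\alpha)$ costs at most one quadratic step, so the tower of Theorem~\ref{T:fundamentalcriterionforconstructibility} is unaffected); both are routine upgrades of \S 2, and the irreducibility input is precisely the $t$-linearity trick above, nothing new.
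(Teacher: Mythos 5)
Your proof is correct, but it takes a genuinely different route from the paper's. The paper argues in two stages: for an odd prime $p$ it writes out the degree-$p$ polynomial $P(x,a)$ relating $\cos(p\theta)$ to $\cos\theta$, computes enough of its coefficients (leading coefficient $2^{p-1}$, all intermediate coefficients divisible by $p$) to apply Eisenstein's criterion after specializing the parameter to a rational $c/d$ with $p\mid c$ and $p^2\nmid c$, thereby exhibiting an explicit non-$p$-sectable angle with rational cosine; it then handles general $n$ by reducing to an odd prime factor via Lemma~\ref{L:nsectableimpliesksectable}. You instead treat general $n$ at once: taking $\cos\alpha$ transcendental so that $\Q(a)\cong\Q(t)$, you show $g_n(X)-t$ is irreducible over $\Q(t)$ by observing it is linear and primitive in $t$ over $\Q[X]$ and then invoking Gauss's Lemma, which pins the degree of $2\cos(\alpha/n)$ over $\Q(a)$ at exactly $n$; since $n$ is not a power of $2$, Theorem~\ref{T:fundamentalcriterionforconstructibility} forbids $n$-sectability. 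This is essentially the mechanism of the paper's Theorem A.2, but the paper only obtains irreducibility there by specializing back to the Eisenstein example and only for prime sections, whereas your linearity-in-$t$ argument is self-contained, needs no coefficient computations, and needs no reduction to the prime case. What the paper's choice buys, and yours does not, is an explicit non-$n$-sectable angle with rational (hence algebraic) cosine, which the paper reuses in Remark (b) and Proposition A.2; your transcendental witness suffices for the theorem as stated. Your bookkeeping for the step from constructibility over $\Q(\cos\alpha,\sin\alpha)$ to constructibility over $\Q(\cos\alpha)$ (one extra quadratic extension at the bottom of the tower) is also sound.
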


\begin{proof}

\noindent\textbf{Case 1:}\quad Assume that $n$ is an odd prime.  We write $n=p$.

The standard identity $(\exp(ip\theta)=\exp(i\theta)^p$ yields the following trigonometric formula:
\[ cos(p\theta)= \sum_{k=0}^q\sum_{\ell=0}^k(-1)^{k+\ell}{p\choose 2k}{k\choose \ell}\cos(\theta)^{p-2k+2\ell},\]
where $q=\frac{1}{2}(p-1)$. Set $a=\cos(p\theta)$ and $x=\cos(\theta)$.  Then, we have an equation

\begin{equation}\label{E:psectionpoly}
P(x,a)= \sum_{k=0}^q\sum_{\ell=0}^k(-1)^{k+\ell}{p\choose 2k}{k\choose \ell}x^{p-2k+2\ell}\;-\;a=0.
\end{equation}

We regard $P(x,a)$ as a polynomial in $x$ with parameter the constant term $a$, analogous to the polynomial $p(x,a)$ defined in \S 1.2.  Indeed, the explicit relationship between $P(x,a)$ and $p(x,a)$ when $n=3$ is given by
$2P(x,a)= p(2x,2a)$.
We now obtain information about the coefficients of $P(x,a)$ in equation (\ref{E:psectionpoly}).

\vspace{.1in}

\noindent\textbf{a)}\quad The top-degree monomial in $P(x,a)$ is $2^{p-1}x^p$.

\vspace{.1in}

To see this, we consider the summands on the right hand side of equation (\ref{E:psectionpoly}), and, holding $k$-fixed, we see that the maximum exponent obtainable occurs when $\ell=k$, yielding $x^p$.  This is independent of $k$, so $x^p$ is the maximum power of $x$ occurring in the formula.  This would imply that the degree of $P(x,a)$ is $p$, provided that the coefficients in the formula satisfying $\ell=k$ do not sum to zero.

The terms with $\ell=k$ have coefficient sum $\sum_{k=0}^q(-1)^{2k}{p\choose 2k}= \sum_{k=0}^q{p\choose 2k}$, which is clearly not zero.
The symmetry properties of the terms ${p\choose 2k}$ allow us to compute the sum.  For consider the terms ${p\choose m}, m=0, 1,2, \ldots, p$.  These pair off as equals
${p\choose m}\leftrightarrow {p\choose p-m},$
with $m$ even if and only if $p-m$ is odd.  It follows that  $\sum_{k=0}^q{p\choose 2k}= \frac{1}{2}\sum_{m=0}^p{p\choose m}=2^{p-1}$.

This verifies that $P(x,a)$ has degree $p$ with leading coefficient $2^{p-1}$.

\vspace{.1in}

\noindent\textbf{b)}\quad  Except for the leading coefficient and the parameter $a$, every coefficient in $P(x,a)$ is divisible by $p$.  Indeed the coefficient of the first-degree term is $(-1)^qp$.

\vspace{.1in}

The coefficient $(-1)^{k+\ell}{p\choose 2k}{k\choose\ell}$ is clearly divisible by the prime $p$ as long as $k\neq 0$.
This implies the first statement.  For the second statement, set $p-2k+2\ell=1$.  Then it follows that $0\leq \ell=k-\frac{1}{2}(p-1)=k-q\leq 0$.  Therefore $\ell=0$ and $k=q$, which implies that the coefficient of $x$ in $P(x,a)$ is as stated.

\vspace{.1in}

We now can apply the  Eisenstein criterion \cite{wae} to the polynomial $P(x,a)$ for appropriate choice of the
parameter $a$.  In particular, choose  any integer $c$ that is divisible by $p$ but not by $p^2$, and let $d$
be any positive integer prime to $c$ such that that $-1\leq c/d\leq 1$.  Then $d^pP(x,c/d)$ is a polynomial in $\Z[x]$ whose top coefficient  is not divisible by $p$, whose  remaining coefficients are divisible by $p$, but whose constant term is not divisible by $p^2$.  These are precisely the conditions under which the Eisenstein criterion implies that $d^pP(x,c/d)$ is irreducible in $\Z[x]$, except possibly for constant factors. It follows immediately from Gauss's Lemma that $P(x,c/d)$ is irreducible in $\Q[x]$.

The argument now is almost identical to the case $n=3$ argued before. Choose $\alpha$ such that $\cos(\alpha)=c/d$. Then $\cos(\alpha/p)$ is a zero of $P(x,c/d)$, by equation (\ref{E:psectionpoly}) and the preceding trigonometric formula. Suppose $\cos(\alpha/p)$ were constructible over $\{(0,0), (1,0), \alpha\}$, and let $f$ be its minimal polynomial.  Then the degree of $f$ over $\Q(\cos(\alpha))=\Q(c/d)=\Q$\ is divisible by a power of $2$ and $f$ is a factor of $P(x,c/d)$ in $\Q[x]$, contradicting the irreducibility of $P(x,c/d)$. Therefore, $\alpha/p$ is not constructible, concluding Case 1.

\vspace{.2in}

\noindent\textbf{Case 2:} General n.  We begin with a simple general observation.

\begin{lemma}\label{L:nsectableimpliesksectable}  Suppose that $k$ is a factor of $n$.  If $\alpha$ is $n$-sectable, then it is $k$-sectable.
\end{lemma}
\begin{proof}  Write $n=k\ell$.  Since $\alpha/n$ is constructible over $\{(1,0),(0,1),\alpha\}$, so is the multiple $\alpha/k=\ell\alpha/n$.
\end{proof}

Now suppose that $n$ is not a power of $2$.  Then it has an odd prime factor
$p$. Let $\alpha$ be an angle that is not $p$-sectable,  which exists by Case 1. Then, by Lemma \ref{L:nsectableimpliesksectable}, $\alpha$ is not $n$-sectable.  This proves the theorem.
\end{proof}

\vspace{.2in}

\noindent\textbf{Remarks:}\quad (a)  The above argument can be slightly elaborated to imply that if $r$ is a rational number strictly between $0$ and $1$, and if, for any given angle $\alpha$, there is a Euclidean  construction that starts with $\alpha$ and produces  the angle $r\alpha$, then $r$ must have the form $k/2^{\ell}$, for some integers $k$ and $\ell$, where $\ell\geq0$.  Of course, when $r$ does have that form and any $\alpha$ is given, the angle $r\alpha$ \underline{can} be constructed.\vspace{.1in}

(b) Let $\alpha$ be any angle, and set $a=\cos(\alpha)$ as before.  Choose any positive integer $n$.  Then, as we have seen above $\cos(\alpha/n)$ is algebraic over $\Q(a)$.  In particular, $\cos(\alpha/n)$ is an  algebraic number whenever $a$ is.  It follows that  the non-$n$-sectable angles $\alpha$ produced in the proof of Theorem A.1 have algebraic cosines.\vspace{.1in}

\begin{prop}If $n$ is a positive integer such that $2\pi/n$ can be constructed (i.e., the regular polygon of $n$ sides can be constructed), then there exists a countable dense subset of $S^1$ consisting of $n$-sectable angles.
 \end{prop}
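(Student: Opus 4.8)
If $n$ is a positive integer such that $2\pi/n$ can be constructed, then there exists a countable dense subset of $S^1$ consisting of $n$-sectable angles.

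My plan is to imitate, for general $n$, the argument used in \S 3.1 for the case $n=3$ (Yates's proposition), where constructibility of $\pi/3$ was the key input. Here the analogous input is the hypothesis that $2\pi/n$, equivalently $\pi/n$, is constructible. First I would observe that for any nonnegative integer $m$ and any integer $k$, the angle $k\pi/2^m$ is both constructible and $n$-sectable: it is constructible by repeated bisection of $\pi$, and it is $n$-sectable because $(k\pi/2^m)/n = k\pi/(2^m n)$, and this angle can be constructed outright since $\pi/n$ is constructible (by hypothesis) and hence so is $\pi/(2^m n)$ by $m$-fold bisection, whence any integer multiple of it is constructible. So the set $\{k\pi/2^m : k\in\Z,\ m\geq 0\}$ already consists of $n$-sectable angles. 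This set is a subgroup of $S^1$ under angle addition, and it is dense in $S^1$ (for instance because $\pi/2^m \to 0$), and it is plainly countable. That set alone furnishes the required countable dense set of $n$-sectable angles.

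Actually, to get a richer family — and to parallel the paper's earlier constructions $k\beta_n + \alpha$ — I would note more generally that if $\beta$ is any fixed $n$-sectable \emph{and} constructible angle, then $\beta + k\pi/2^m$ is again $n$-sectable and constructible for all integers $k$ and all $m\geq 0$: constructibility is clear, and for $n$-sectability we write $(\beta + k\pi/2^m)/n = \beta/n + k\pi/(2^m n)$; since $\beta/n$ is constructible over $\{(0,0),(1,0),\beta\}$ by $n$-sectability of $\beta$, and $k\pi/(2^m n)$ is outright constructible as above, the sum $(\beta+k\pi/2^m)/n$ is constructible over $\{(0,0),(1,0),\beta+k\pi/2^m\}$ (one first constructs $\beta = (\beta+k\pi/2^m) - k\pi/2^m$, then $\beta/n$, then adds). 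Taking $\beta = 0$ recovers the basic family; in any case density and countability are immediate.

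The only genuinely substantive point — and the step I expect to require the most care — is the verification that $\pi/n$ is constructible given that $2\pi/n$ is, i.e. that bisection is a legitimate Euclidean construction applied to a given angle. This is standard (bisect the angle $2\pi/n$ by the usual compass-and-straightedge angle bisection), so no real obstacle arises; all other steps are the same elementary "add a constructible angle" manipulations already used repeatedly in \S 2 and \S 3, together with the trivial observations that a subgroup of $S^1$ containing $\pi/2^m$ for all $m$ is dense and that the set in question is countable.
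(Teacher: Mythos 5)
Your proof is correct, but it follows a different route from the paper's. The paper reuses Yates's B\'ezout construction from \S 3.1: for any $m$ prime to $n$ it writes $an+bm=1$, multiplies by $2\pi/mn$ to get $(1/n)(2\pi/m)=a(2\pi/m)+b(2\pi/n)$, and concludes that $2\pi/m$ is $n$-sectable because $2\pi/n$ is constructible; the angles $2\pi/m$ (together, implicitly, with their integer multiples, as in the $n=3$ prototype of Proposition 3.1 --- the set $\{2\pi/m\}$ alone accumulates only at $0$) give the countable dense set. You instead take the dyadic multiples $k\pi/2^m$ of $\pi$ and show that their $n$-th parts $k\pi/(2^mn)$ are constructible \emph{outright} (bisect $2\pi/n$ repeatedly and take integer multiples), hence a fortiori constructible over the given data. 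Your argument is more elementary --- no B\'ezout identity, no coprimality bookkeeping --- and your set is literally dense as written; the trade-off is that your family consists entirely of constructible angles whose $n$-th parts are constructible from scratch, whereas the paper's family also contains non-constructible angles such as $2\pi/7$ (when $\gcd(7,n)=1$), for which $n$-sectability genuinely uses the relative notion of constructibility over the starting angle. For the proposition as stated, either family suffices, and your verification of the one substantive step (that bisection yields $\pi/n$ from $2\pi/n$) is sound.
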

 \begin{proof}This follows from the construction of Yates described earlier in \S 3.1.  Namely, let $m$ be any positive integer prime to $n$, and choose integers $a$ and $b$ such that $an+bm=1$.  Multiply this equation by the
quantity $2\pi/mn$  Then, $(1/n)(2\pi/m)= a(2\pi/m)+b(2\pi/n)$, showing that $2\pi/m$ is $n$-sectable.
The set of such angles $2\pi/m$ is clearly countable and dense in $S^1$.
\end{proof}\vspace{.1in}

As is well known, Gauss showed that a necessary and sufficient condition for $2\pi/n$ to be constructible is that $\phi(n)$ be a power of $2$, where $\phi$ is the Euler function.  Examples of odd $n$ for which this is true are: $n= 3,5,17,257,65537. $\vspace{.1in}

\begin{prop} If $n$ is a positive odd integer such that $2\pi/n$ can be constructed , then there exists a countable dense subset of $S^1$ consisting of non-$n$-sectable angles.
\end{prop}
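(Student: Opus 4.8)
The plan is to obtain the required dense set directly from the non-$p$-sectable angles already exhibited in Case~1 of the proof of Theorem~A.1. Since $n$ is odd we may assume $n>1$ (the statement being vacuous otherwise), so $n$ has an odd prime divisor $p$. Case~1 of the proof of Theorem~A.1 shows, via Eisenstein's criterion applied to the polynomial $P(x,c/d)$, that whenever $c$ is an integer divisible by $p$ but not by $p^2$ and $d$ is a positive integer prime to $c$ with $|c/d|\le 1$, then $P(x,c/d)$ is irreducible over $\Q$; consequently \emph{every} angle $\alpha$ with $\cos(\alpha)=c/d$ fails to be $p$-sectable, the argument depending only on the value of $\cos(\alpha)$ since $\cos(\alpha/p)$ is a root of $P(x,c/d)$ for any such $\alpha$ (because $P(\cos(\phi),a)=\cos(p\phi)-a$). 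By Lemma~\ref{L:nsectableimpliesksectable}, read contrapositively with $k=p$, such an $\alpha$ is then also not $n$-sectable.

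Accordingly, I would let $D$ be the set of rational numbers in $[-1,1]$ whose $p$-adic valuation is exactly $1$: writing a member of $D$ in lowest terms as $c/d$ with $d>0$ gives precisely $p\mid c$, $p^2\nmid c$, $\gcd(c,d)=1$, so by the previous paragraph every angle whose cosine lies in $D$ is non-$n$-sectable. The set
\[ T=\{\alpha\in S^1:\cos(\alpha)\in D\}\]
is then the desired set: it is countable (at most two points of $S^1$ lie over each element of the countable set $D$), it consists entirely of non-$n$-sectable angles, and it is dense in $S^1$ because the preimage under the projection $S^1\to[-1,1]$, $(x,y)\mapsto x$, of a dense subset of $[-1,1]$ is dense in $S^1$.

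So the only step left is to check that $D$ is dense in $[-1,1]$, which is elementary: given $r\in[-1,1]$ and $\varepsilon>0$, choose a large positive integer $N$ prime to $p$; among the numbers $kp/N$ (all of $p$-adic valuation $1$ once $p\nmid k$), those with $p\nmid k$ and $|kp/N|\le 1$ are spaced at most $2p/N$ apart and fill $[-1,1]$ up to the two endpoints, so for $N$ large one of them lies within $\varepsilon$ of $r$; a trivial adjustment handles $r=\pm1$. Reducing such a number to lowest terms puts it in $D$, completing the argument.

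There is no genuine obstacle here: the whole content is already in Case~1 of Theorem~A.1, and the present proposition is a short corollary, the only non-citation step being the easy density of $D$. (It is worth noting in passing that, in contrast with the preceding proposition, the hypothesis that $2\pi/n$ be constructible is never used---all that is needed is that $n$ be odd and greater than $1$.)
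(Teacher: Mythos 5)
Your proof is correct, but it takes a genuinely different route from the paper's. The paper argues by translation: it fixes a single non-$n$-sectable angle $\beta$ (which exists by Theorem A.1) and shows that every angle $\gamma=\beta+c\pi/2^k$ is non-$n$-sectable, because if $\gamma/n$ were constructible from $\gamma$ one could subtract the angle $c\pi/(n2^k)$ and recover $\beta/n$; the $n$-sectability of the dyadic angles $c\pi/2^k$, established by the B\'ezout argument of Proposition A.1, is exactly where the hypothesis that $2\pi/n$ be constructible enters. You instead manufacture the dense set directly from the Eisenstein construction in Case 1 of Theorem A.1: the rationals of $p$-adic valuation exactly $1$ are dense in $[-1,1]$, every angle whose cosine is such a rational fails to be $p$-sectable since its $p$-section has cosine a root of the irreducible $P(x,c/d)$, and Lemma \ref{L:nsectableimpliesksectable} then rules out $n$-sectability. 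Your approach buys a strictly stronger statement --- the constructibility of $2\pi/n$ is never used, and in fact only the existence of an odd prime divisor of $n$ matters, so the conclusion holds for every $n$ that is not a power of $2$; moreover your non-$n$-sectable angles all have rational cosines. The paper's approach is softer and more reusable: it needs only one non-$n$-sectable angle together with a dense set of angles that are $n$-sectable and constructible, at the price of actually invoking the hypothesis on $2\pi/n$. One small quibble: for $n=1$ the proposition is false rather than ``vacuous'' (every angle is $1$-sectable), but the paper's own proof likewise tacitly assumes a non-$n$-sectable $\beta$ exists, so both arguments implicitly require $n>1$.
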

\begin{proof}To see this,  suppose that $n$ is an odd number such that $2\pi/n$ is constructible, and suppose that $\beta$ is a non-$n$-sectable angle.
By the argument for Proposition A.1, every integer multiple of $\pi/2^k$ is $n$-sectable, for any $k\geq 0$. \emph{We claim that $\gamma=\beta + c\pi/2^k$ is not $n$-sectable, for every integer $c$ and every $k\geq 0$}.  The argument is essentially that given in \S 1.2.  We give it here for the reader's convenience: Suppose $\gamma$ were $n$-sectable.  Then, starting with $\beta$ we could construct $\gamma$ and then $\gamma/n$.  Construct $c\pi/n2^k$ and subtract this from $\gamma/n$, obtaining $\beta/n$.  This would provide a Euclidean $n-section$ of $\beta$, which is impossible. The set of all these non-$n$-sectable $\gamma$'s is clearly countable and dense in $S^1$.
\end{proof} \vspace{.1in}

If $\cos(\beta)$ above is an algebraic number---and such $\beta$ exist for every $n$ that is not a power of $2$, by Remark (b) above---then it is easy to see that $\cos(\gamma)$ is algebraic.  Therefore, the foregoing gives a countable dense set of non-$n$-sectable angles with algebraic cosines.  The case of transcendental cosines is handled by the next result, which extends Corollaries \ref{C:transcendentalimpliesirreducible} and \ref{C:cardinalityoftrisectibleangles}.\vspace{.2in}

\begin{theorem}  Suppose that $cos(\alpha)$ is transcendental and that $n$ is a positive integer that is not a power of $2$.  Then $\alpha$ is not $n$-sectable.  Therefore, the set of all non-$n$-sectable angles is uncountable, and the set of all $n$-sectable angles is countable.
\end{theorem}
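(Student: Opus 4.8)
The plan is to follow the pattern of the proof of Corollary~\ref{C:cardinalityoftrisectibleangles}, with the cubic $p(x,a)$ replaced by the degree-$p$ polynomial $P(x,a)$ of equation~(\ref{E:psectionpoly}) for a well-chosen odd prime $p$. Since $n$ is not a power of $2$, it has an odd prime divisor $p$, and by Lemma~\ref{L:nsectableimpliesksectable} it suffices to prove that $\alpha$ is not $p$-sectable, i.e.\ that $\cos(\alpha/p)$ is not constructible over $\Q(\cos(\alpha))$. Recall from the trigonometric formula preceding (\ref{E:psectionpoly}) that, with $a=\cos(\alpha)$, the number $\cos(\alpha/p)$ is a zero of $P(x,a)\in\Q(a)[x]$, a polynomial of $x$-degree $p$.

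The heart of the matter is the claim that $P(x,a)$ is irreducible over $\Q(a)$ whenever $a$ is transcendental --- the exact analogue of Lemma~\ref{L:indeterminateparameter} and Corollary~\ref{C:transcendentalimpliesirreducible}, now to be established via Eisenstein's criterion as promised in \S 3.2. I would argue by specialization. Since $a$ is transcendental, $\Q[a]$ is a UFD with fraction field $\Q(a)$, and $P(x,a)$ is primitive in $\Q[a][x]$ because its leading $x$-coefficient is the unit $2^{p-1}$ of $\Q[a]$ (part (a) of the proof of Theorem~A.1). Hence, by Gauss's Lemma, a factorization of $P(x,a)$ over $\Q(a)$ into factors of positive $x$-degree would yield one in $\Q[a][x]$, say $P=AB$. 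As the top coefficient $2^{p-1}$ is a nonzero constant, the leading $x$-coefficients of $A$ and $B$ are themselves nonzero constants, so evaluating $a\mapsto p$ preserves the $x$-degrees of both factors and exhibits $P(x,p)\in\Z[x]$ as reducible over $\Q$ into factors of positive degree. But $P(x,p)$ has leading coefficient $2^{p-1}$ (not divisible by $p$), all intermediate coefficients divisible by $p$, and --- since $p-2k+2\ell=0$ has no solution when $p$ is odd, so the double sum in (\ref{E:psectionpoly}) contributes no constant term --- constant term $-p$, which is divisible by $p$ but not by $p^2$; here I use parts (a) and (b) of the proof of Theorem~A.1. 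Thus $P(x,p)$ is an Eisenstein polynomial at $p$, hence irreducible over $\Q$, a contradiction. (Alternatively, one may bypass Eisenstein entirely: $P(x,a)$ is linear in the variable $a$ with unit leading coefficient $-1$, hence irreducible in $\Q[x,a]$, and, having positive $x$-degree, is therefore irreducible over $\Q(a)$ by Gauss.)

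Granting the claim, $P(x,\cos(\alpha))$ is irreducible of $x$-degree $p$ over $\Q(\cos(\alpha))$, so the field $\Q(\cos(\alpha),\cos(\alpha/p))$ has degree $p$ over $\Q(\cos(\alpha))$. Since $p$ is odd and greater than $1$, it is not a power of $2$, so by Theorem~\ref{T:fundamentalcriterionforconstructibility} the element $\cos(\alpha/p)$ is not constructible over $\Q(\cos(\alpha))$; that is, $\alpha$ is not $p$-sectable, and hence not $n$-sectable by Lemma~\ref{L:nsectableimpliesksectable}. Therefore every angle with transcendental cosine is non-$n$-sectable, so the set of $n$-sectable angles is contained in $\{\alpha\in S^1:\cos(\alpha)\ \text{algebraic}\}$. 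Because $\alpha\mapsto\cos(\alpha)$ is at most $2$-to-$1$ onto $[-1,1]$ and the algebraic numbers form a countable set, this set of $n$-sectable angles is countable, and its complement in $S^1$ --- the set of non-$n$-sectable angles --- is uncountable, exactly as in Corollary~\ref{C:cardinalityoftrisectibleangles}.

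The step I expect to require the most care is the descent inside the irreducibility claim: one must verify that reducibility of $P(x,a)$ over the \emph{field} $\Q(a)$ genuinely forces reducibility of the specialization $P(x,p)$ over $\Q$ with no drop in $x$-degree, and this is precisely where the fact that the leading coefficient $2^{p-1}$ is a constant independent of $a$ is used. Everything else is either bookkeeping with the coefficients of $P(x,a)$, already carried out in the proof of Theorem~A.1, or a verbatim repetition of the counting argument in Corollary~\ref{C:cardinalityoftrisectibleangles}.
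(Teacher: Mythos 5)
Your proposal is correct and follows essentially the same route as the paper: reduce to an odd prime divisor $p$ of $n$ via Lemma~\ref{L:nsectableimpliesksectable}, establish irreducibility of $P(x,a)$ over $\Q(a)$ for transcendental $a$ by specializing the parameter to a rational value where Eisenstein's criterion applies, and then conclude non-constructibility of $\cos(\alpha/p)$ and the countability statements exactly as in Corollary~\ref{C:cardinalityoftrisectibleangles}. The only difference is that you spell out the specialization step (Gauss's Lemma plus the fact that the constant leading coefficient $2^{p-1}$ prevents any degree drop) which the paper's proof of Theorem~A.2 states only tersely via the homomorphism $\chi$; this is a welcome clarification, not a departure.
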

\begin{proof}  We use the notation introduced above.  In particular, we use the polynomial $P(x,a)$ in
(\ref{E:psectionpoly}), where $a=\cos(\alpha)$.  Let $P(x,t)\in \Q[t]$ be the polynomial obtained from $P(x,a)$ by replacing $a$ by the indeterminate $t$.

Assume first that $n$ is an odd prime $p$.  We claim that $P(x,t)$ is irreducible in $\Q[t]$.
To see this, choose $c/d$ as in Case 1 of the proof of Theorem A.1.  Let $\chi$ be the $\Q$-algebra homomorphism that sends $t$ to $c/d$, so that $\chi(P(x,t))=P(x,c/d)$.  Since $P(x,c/d)$ is irreducible, by Case 1 of the proof of Theorem A.1, $P(x,t)$ must be irreducible, as claimed.

It follows immediately that $P(x,a)$ is irreducible, because $\Q[t]\cong\Q[a]$ when $a$ is transcendental.

We now argue as we did earlier.  Since $n$ is not a power of $2$, it has an odd prime factor $p$. If $\cos(\alpha/p)$ is constructible over $\Q(a)$, then its minimal polynomial over $\Q(a)$, say $f$, has degree a power of $2$.  In particular, the degree does not equal $0$ or $p$.  But $\cos(\alpha/p)$ is a zero of $P(x,a)$, so $f$ divides $P(x,a)$, contradicting the irreducibility of $P(x,a)$.

Therefore, $\alpha$ is not $p$-sectable.  Applying Lemma \ref{L:nsectableimpliesksectable}, we conclude that $\alpha$ is not $n$-sectable.

The last two statements of the theorem simply use the standard facts about transcendental numbers and algebraic numbers.
\end{proof}

\vspace{.2in}

\section{Change of basis}

Let $\mathcal{V}_1 = \{v_1,\ldots,v_k\}$ and $\mathcal{V}_2 = \{w_1,\ldots,w_k\}$ be \Q-vector space bases of the real number field $K$, and let $h_1$ and $h_2$ be the corresponding height functions, as defined in \S 7.1.  We show first that $h_1$ and $h_2$ are commensurate.  More precisely, we show that there is a positive integer $d$ such that
\begin{equation}\label{E:commensurability}
\frac{1}{d}h_2\leq h_1 \leq dh_2.
\end{equation}

Let $T=[t_{ij}]$ be the $k\times k$ matrix of rational numbers given by
\[ w_j=\sum_{i=1}^k t_{ij}v_i, \quad j=1,\ldots, k.\]

We say that $T$ is \emph{elementary} if one of the following is true: (a) $T$ represents a permutation of the basis $\mathcal{V}_1$. (b) $T$ represents the addition (resp., subtraction) of one basis vector of $\mathcal{V}_1$ to (resp., from) another. (c) $T$ represents the multiplication of one basis vector of $\mathcal{V}_1$ by a non-zero rational number while fixing the others.

Of course, every product of elementary matrices is invertible. Elementary row-reduction would imply the converse, except row reduction allows a slightly richer class of elementary matrices of type (b). However, it is easy to see that these can be obtained by multiplying suitable elementary matrices of the above type. So, every invertible matrix is a product of ones that are elementary in the above sense.

\begin{lemma} If $T$ is an elementary matrix, then  (\ref{E:commensurability}) holds.
\end{lemma}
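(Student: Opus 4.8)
The plan is to verify the commensurability inequality (\ref{E:commensurability}) separately for each of the three types of elementary matrices, exploiting the fact that the height $h_K(x)$ is defined via the unique representation $x = (a_1v_1+\cdots+a_kv_k)/b$ with $a_1,\dots,a_k,b$ relatively prime and $b>0$. Fix $x\in K$ and write its $\mathcal{V}_1$-representation as $x=(a_1v_1+\cdots+a_kv_k)/b$ and its $\mathcal{V}_2$-representation as $x=(c_1w_1+\cdots+c_kw_k)/b'$, both in lowest terms. The relation $w_j=\sum_i t_{ij}v_i$ lets me pass between the coefficient vectors: substituting gives $x=\bigl(\sum_i(\sum_j t_{ij}c_j)v_i\bigr)/b'$, so the vector $(a_1,\dots,a_k,b)$ and the vector obtained by clearing denominators from $\bigl(\sum_j t_{1j}c_j,\dots,\sum_j t_{kj}c_j,\, b'\bigr)$ differ only by the common scaling needed to restore the relatively-prime normalization. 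The key point is that this normalization scaling can only \emph{decrease} the max-norm, so an upper bound on the unnormalized max-norm is automatically an upper bound on $h_1(x)$.

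For type (a), a permutation, the two representations have identical coefficient multisets and identical denominators, so $h_1=h_2$ and (\ref{E:commensurability}) holds with $d=1$. For type (c), scaling $v_i$ by a nonzero rational $p/q$ (in lowest terms): here $b'=qb$ (up to the relatively-prime renormalization) and $c_i = p\cdot a_i$ while $c_j=q a_j$ for $j\neq i$ — or rather, after clearing, the new integer tuple is a bounded-factor rescaling of the old one, with the factor bounded by $|pq|$ in each direction. So (\ref{E:commensurability}) holds with $d=|pq|$. For type (b), replacing $v_j$ by $v_j\pm v_i$ (equivalently, the change of variables on coefficients is $c_i\mapsto c_i\mp c_j$, others fixed, same denominator): the new numerator coefficients are each a sum or difference of at most two old ones, hence bounded by $2\max|a_\ell|\le 2h_2(x)$, and the denominator is unchanged; the relatively-prime renormalization only shrinks things. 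The inverse of a type-(b) matrix is again type (b), so the reverse inequality follows by symmetry, and (\ref{E:commensurability}) holds with $d=2$.

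In each case I would take $d$ to be the explicit constant just described — $1$, $|pq|$, or $2$ respectively — and note it depends only on $T$, not on $x$. The one technical subtlety to handle carefully is the passage from the ``substituted but not-yet-normalized'' integer tuple to the genuine lowest-terms tuple: I would record once, as a preliminary observation, that if $(\alpha_1,\dots,\alpha_k,\beta)$ is any integer tuple with $\beta>0$ representing $x$ (not necessarily in lowest terms), then $h_K(x)\le \max\{|\alpha_1|,\dots,|\alpha_k|,\beta\}$, since dividing through by the gcd only decreases the maximum. With that lemma in hand, each of the three cases is a one-line estimate. I expect the main (very mild) obstacle to be bookkeeping: making sure the denominators are tracked correctly through the substitution in case (c), where the change of basis genuinely alters $b$, as opposed to cases (a) and (b) where it does not. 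This is routine, so I would state the three estimates and leave the renormalization remark as the only thing spelled out in detail.
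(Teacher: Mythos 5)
Your proposal is correct and follows essentially the same route as the paper: a case analysis over the three types of elementary matrices, resting on the observation that passing to the relatively-prime normalization can only decrease the max-norm, and yielding the same constants $d=1$ and $d=2$ in cases (a) and (b). The only divergence is case (c), where the paper explicitly bounds the gcd $G$ of the unnormalized coefficients (proving $G\mid rs$) to obtain the lower bound on $h_2$ directly, whereas you avoid any gcd computation by getting each direction of the inequality as an upper bound from an unnormalized representation, using that the inverse of a type-(c) matrix is again type (c); this is a mild but genuine simplification, and your cruder constant $|pq|$ in place of $\max\{|r|,|s|\}$ is harmless since any positive integer $d$ suffices.
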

\begin{cor}\label{C:changeofbasis} Let $h_1$ and $h_2$ be height functions corresponding to two choices of rational bases of $K$.  Then, there exists a positive integer $d$ satisfying the inequalities (\ref{E:commensurability}).
\end{cor}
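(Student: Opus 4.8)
The plan is to reduce the general case to the elementary case already settled in the preceding Lemma, using the observation (made in the paragraph just before that Lemma) that every invertible rational matrix is a product of elementary matrices of types (a)--(c). Concretely, let $T=[t_{ij}]$ be the change-of-basis matrix expressing $\mathcal{V}_2$ in terms of $\mathcal{V}_1$. First I would factor $T=T_1T_2\cdots T_m$ with each $T_\ell$ elementary. This factorization determines a chain of intermediate \Q-vector space bases $\mathcal{V}_1=\mathcal{B}_0,\mathcal{B}_1,\ldots,\mathcal{B}_m=\mathcal{V}_2$ of $K$, in which $\mathcal{B}_\ell$ is obtained from $\mathcal{B}_{\ell-1}$ by the elementary transformation encoded by $T_\ell$. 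Write $g_\ell\colon K\to(0,\infty)$ for the height function attached to $\mathcal{B}_\ell$ (as defined in \S 7.1), so that $g_0=h_1$ and $g_m=h_2$.

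Next I would apply the preceding Lemma to each consecutive pair $(\mathcal{B}_{\ell-1},\mathcal{B}_\ell)$: since these two bases are related by the elementary matrix $T_\ell$, there is a positive integer $d_\ell$ with $\frac{1}{d_\ell}g_\ell\leq g_{\ell-1}\leq d_\ell g_\ell$. Chaining the right-hand inequalities from $\ell=1$ to $\ell=m$ gives $h_1=g_0\leq d_1g_1\leq d_1d_2g_2\leq\cdots\leq (d_1\cdots d_m)g_m=(d_1\cdots d_m)h_2$, and chaining the left-hand inequalities the same way gives $h_2\leq (d_1\cdots d_m)h_1$. Setting $d=d_1d_2\cdots d_m$, which is again a positive integer, we obtain $\frac{1}{d}h_2\leq h_1\leq d h_2$, i.e. exactly the inequalities (\ref{E:commensurability}).

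There is no serious obstacle here: the corollary is essentially the transitivity of commensurability of height functions, with the constants multiplying along the chain. The only points needing a brief check are bookkeeping points already handled in the surrounding text --- namely that the factorization of $T$ can be taken to use only matrices elementary in the restricted sense (a)--(c) (the slightly richer type-(b) moves arising in ordinary row reduction being themselves products of these, as noted before the Lemma), and that each elementary step genuinely is an invertible change of basis so that $g_\ell$ is well defined. Whatever difficulty exists in controlling the distortion of the height is confined to the proof of the preceding Lemma, not to the deduction of this corollary.

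Finally, I would remark that this commensurability is all that is needed to make the Main Conjecture basis-independent: if $\frac{1}{d}h_2\leq h_1\leq d h_2$, then $B_{K}^{(2)}(R/d)\subseteq B_{K}^{(1)}(R)\subseteq B_{K}^{(2)}(dR)$ for the balls of the two height functions, so the density ratios $\delta_K(R)$ computed with respect to $\mathcal{V}_1$ are squeezed between those computed with respect to $\mathcal{V}_2$ at the arguments $R/d$ and $dR$; hence one limit vanishes if and only if the other does.
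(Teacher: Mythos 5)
Your argument is exactly the paper's: factor the change-of-basis matrix into elementary matrices, apply the preceding Lemma to each factor in turn, and multiply the resulting constants. The proposal is correct and merely writes out the chaining that the paper leaves implicit.
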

\begin{proof} Let $T$ be the change of basis matrix, factor it into a product of elementary matrices, and apply the lemma successively to these.
\end{proof}

It remains to prove the lemma.

\begin{proof} (a)  When $T$ is a permutation matrix, the representations of  a field element $\alpha$ in terms of the two bases differ only by a permutation  of the coefficients.  But the definition of the height function shows that it is invariant under permutation of coefficients, so that $h_1=h_2$ in this case: i.e., the inequalities are satisfied for $d=1$.

(b) Without loss of generality, let us assume that the basis change involves adding (resp., subtracting) $v_2$ to (resp. from) $v_1$ and fixing the other vectors.
Then, writing
\begin{equation}\label{E:representationofalpha}
\alpha= (a_1v_1+\ldots+a_kv_k)/b,
\end{equation}
as in \S 7.1, we  have
\[\alpha= (a_1w_1+(a_2 \mp a_1)w_2+a_3w_3+\ldots+a_kw_k)/b.\]
It is easy to see that $a_1,\ldots,a_k,b$ are relatively prime if and only if $a_1,(a_2\mp a_1), a_3,\ldots,a_k,b$ are relatively prime.  Therefore, assuming the former,we get
\[h_2(\alpha)=\max\{|a_1|, |a_2\mp a_1|,|a_3|,\ldots,|a_k|,b\}\leq 2\max\{|a_1|,\ldots,|a_k|,b\}=2h_1(\alpha).\]
A symmetric argument proves the same inequality with $h_1$ and $h_2$ exchanged. Therefore (\ref{E:commensurability}) holds in this case with $d=2$.

(c) Here we do not lose generality by assuming that $t_{ij}=\delta_{ij}$ (the Kronecker delta)
when $(i,j)\neq (1,1)$, and $t_{11}=t$,  where $t$ is a rational number, which can be written ``in lowest terms'' as $t=r/s$.  Then, with $\alpha$ as above in (\ref{E:representationofalpha}), we may write
\[\alpha =(a_1/t)w_1+a_2w_2+\ldots a_k w_k)/b=((sa_1)w_1+(ra_2)w_2+\ldots+(ra_k)w_k)/rb.\]

Notice that  in this last representation of $\alpha$,  the integers $sa_1, ra_2, \ldots, ra_k, rb$ may not be relatively prime.  So, we cannot apply the usual formula for the height function to these.  However,
if we divide all these integers by their greatest common divisor, $G$, then the resulting integers are relatively prime, and they do result from a representation of $\alpha$.  It follows that
\begin{equation}\label{E:equationforh2}
h_2(\alpha)=\max\{sa_1, ra_2, \ldots, ra_k, rb\}/G.
\end{equation}

To proceed further, we need some sort of upper bound for $G$.  This is provided by the following claim:
\emph{$G$ divides $rs$}.  To see this, suppose that $p$ is a prime dividing $G$ and that $p^m$ is the highest power of $p$ dividing $G$.  If $p^m$ fails to divide $r$ \emph{and} $p^m$ fails to divide $s$, then $p$ must divide $a_1, \ldots, a_k$ and $b$, a contradiction.  Therefore, $p^m$ divides $r$, or it divides $s$. Hence it divides $rs$.  It follows that $G|rs.$

We now apply this to equation (\ref{E:equationforh2}).
\begin{eqnarray*}
h_2(\alpha) & \geq & \max\{sa_1, ra_2, \ldots, ra_k, rb\}/|rs|\\
&\geq & \min\{|r|,|s|\}\max\{|a_1|,\ldots, |a_k|, b\}/|rs|\\
&= & h_1(\alpha)/\max\{|r|,|s|\}.
\end{eqnarray*}

Thus, the right-hand inequality in (\ref{E:commensurability}) holds in this last case as well, with $d=\max\{|r|,|s|\}.$  A symmetric argument produces the left-hand inequality.
\end{proof}

\vspace{.2in}

Next we use  Corollary \ref{C:changeofbasis} to show that our conjectures and results concerning density do not depend on the choice of the basis.

\vspace{.1in}

If $h_1$ and $h_2$ are height functions satisfying (\ref{E:commensurability}), and if
\begin{equation}
 B_i(R)=h_i^{-1}[0,R),
 \end{equation}
then Corollary \ref{C:changeofbasis} implies that
\begin{equation}\label{E:boxrelation}
B_1(R/d)\subseteq B_2(R)\subseteq B_1(dR).
\end{equation}

We recall the definition of density, with respect to each height function:
\begin{equation}\label{E:delta}
\delta_i(R)=\frac{|Tri\cap B_i(R)|}{|[-2,2]\cap B_i(R)|},
\end{equation}
for $i=1,2,$ (cf. \S 7.1).  We shall show that $\delta_1(R)$  is $\mathcal{O}(R^{-\frac{2}{3}(k+1)})\Leftrightarrow \delta_2(R)$ is $\mathcal{O}(R^{-\frac{2}{3}(k+1)})$.

Of course, by symmetry we need only prove one implication, say $\Rightarrow$.

Using (\ref{E:delta}) for $i=2$, together with (\ref{E:boxrelation}), we have

\[ \delta_2(R)\leq\frac{|Tri\cap B_1(dR)|}{[-2,2]\cap B_2(R)|}.\]
Using (\ref{E:boxrelation}) again, we get
\[ \delta_2(R)\leq\frac{|Tri\cap B_1(dR)|}{[-2,2]\cap B_1(R/d)|}= \delta_1(dR)\cdot\frac{|[-2,2]\cap B_1(dR)|}{|[-2,2]\cap B_1(R/d)|}. \]

We now use the ``box'' $Q_1(R/d)$ constructed exactly as $Q(R)$ is constructed in \S 6.2, equation (41).  The  inclusion
$Q_1(R/d)\subseteq [-2,2]\cap B_1(R/d)$ follows exactly as in Lemma 6.1, so we get
\[\delta_2(R)\leq \delta_1(dR)\cdot\frac{|B_1(dR)|}{|Q_1(R/d)|}.\]
Just as in \S 6, one shows that both $|B_1(dR)|$ and $|Q_1(R/d)|$ are $\mathcal{O}(R^{k+1})$.  It follows that their quotient is bounded, say by $M$.

Thus, we have shown that $\delta_2(R)\leq M\delta_1(dR)$.  From this, the desired implication is immediate.

\vspace{.2in}

\section{Constructible non-trisection numbers of arbitrarily high degree}

Since the standard examples of  non-trisectable angles---namely, $\pi/3+\pi/2^n$--- are constructible, it may be of some interest to obtain  information about how complicated they are to construct, by which we mean the minimal number of Euclidean ruler and compass steps  it would take to construct them.  This problem is certainly not well-posed, but even without going into a lengthy analysis, we can probably agree
that the $\log_2$ of the  algebraic degree of $\cos(\pi/3+\pi/2^n)$ (or, $2\cos(\pi/3+\pi/2^n)$) gives a weak lower bound.  It ignores all the ``rational'' constructions required, but it does count the ``quadratic'' ones.

\vspace{.1in}

In this appendix we prove the following:

\begin{prop}  The degree of $2\cos(\pi/3+\pi/2^n)$ is $2^n$.
\end{prop}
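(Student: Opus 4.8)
The plan is to pass to the cyclotomic field generated by $e^{i\alpha}$, where $\alpha=\pi/3+\pi/2^n$, and to invoke the standard fact that for $m\geq 3$ the maximal real subfield of $\Q(\zeta_m)$ has index $2$ in $\Q(\zeta_m)$ and is generated over \Q\ by $\zeta_m+\zeta_m^{-1}$, where $\zeta_m=e^{2\pi i/m}$.

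First I would rewrite $e^{i\alpha}=\exp\!\left(i\pi\left(\tfrac13+\tfrac1{2^n}\right)\right)=\exp\!\left(2\pi i\cdot\tfrac{2^n+3}{3\cdot 2^{n+1}}\right)$, so that $e^{i\alpha}=\zeta_m^{\,k}$ with $m=3\cdot 2^{n+1}$ and $k=2^n+3$. The one genuinely computational step is the coprimality check $\gcd(k,m)=1$: for $n\geq 1$ the integer $2^n+3$ is odd, and since $2\equiv -1\pmod 3$ we have $2^n+3\equiv(-1)^n\pmod 3$, which is never divisible by $3$; hence $k$ is prime to both $2^{n+1}$ and $3$, so $\gcd(k,3\cdot 2^{n+1})=1$. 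Therefore $e^{i\alpha}$ is a \emph{primitive} $m$-th root of unity, and $[\Q(e^{i\alpha}):\Q]=\phi(m)=\phi(3)\phi(2^{n+1})=2^{n+1}$.

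Next, $2\cos(\alpha)=e^{i\alpha}+e^{-i\alpha}\in\Q(e^{i\alpha})$, and $e^{i\alpha}$ is a root of $x^2-(2\cos\alpha)x+1\in\Q(2\cos\alpha)[x]$. When $n\geq 1$ the coefficient $\tfrac13+\tfrac1{2^n}$ of $\pi$ lies strictly between $0$ and $1$, so $\alpha$ is not an integer multiple of $\pi$ and $e^{i\alpha}\notin\R$; since $\Q(2\cos\alpha)\subseteq\R$, the above quadratic is irreducible over $\Q(2\cos\alpha)$, giving $[\Q(e^{i\alpha}):\Q(2\cos\alpha)]=2$. Combining the two degree computations, $[\Q(2\cos\alpha):\Q]=2^{n+1}/2=2^n$, which is the claim. (The degenerate case $n=0$ gives $2\cos(4\pi/3)=-1$ of degree $1=2^0$ and can be dispatched by hand, or simply excluded.)

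There is no real obstacle here: the proof is a short chain of standard cyclotomic facts plus the parity-and-divisibility observation above, and the only points needing attention are that $e^{i\alpha}$ has exact order $m$ (secured by $\gcd(2^n+3,\,3\cdot 2^{n+1})=1$) and that the real-subfield index is exactly $2$ (secured by $e^{i\alpha}\notin\R$). A more hands-on alternative avoids cyclotomic theory: the addition formula yields $2\cos\alpha=\tfrac12\bigl(u-\sqrt{3(4-u^2)}\bigr)$ with $u=2\cos(\pi/2^n)$ and $3(4-u^2)=12\sin^2(\pi/2^n)$, and one then checks that $[\Q(u):\Q]=2^{n-1}$ and that adjoining $\sqrt{3(4-u^2)}$ produces a further quadratic extension inside which $2\cos\alpha$ lies and generates; but proving this last non-degeneracy directly is more delicate than the cyclotomic count, so I would present the argument above.
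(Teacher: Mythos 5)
Your proof is correct, and it takes a genuinely different route from the paper. You pass to $\Q(e^{i\alpha})$, verify via the coprimality check $\gcd(2^n+3,\,3\cdot 2^{n+1})=1$ that $e^{i\alpha}$ is a primitive $3\cdot 2^{n+1}$-th root of unity of degree $\phi(3\cdot 2^{n+1})=2^{n+1}$ over $\Q$, and then drop down by the quadratic $x^2-(2\cos\alpha)x+1$, which is irreducible over the real field $\Q(2\cos\alpha)$ because $e^{i\alpha}\notin\R$; the tower formula then gives degree $2^n$. (Note that you do not actually need the general statement about the maximal real subfield being generated by $\zeta_m+\zeta_m^{-1}$ --- the two-step degree count you carry out is self-contained.) The paper instead works entirely inside $\R$ with the quantities $a_n=2\cos(\pi/2^n)$, $b_n=2\sin(\pi/2^n)$, $c_n=2\cos(\pi/3+\pi/2^n)$, $d_n=2\cos(\pi/3-\pi/2^n)$: it shows the iterated polynomial $p_n=p_1\circ p_{n-1}$ with $p_1(x)=x^2-2$ is Eisenstein at $2$ over $\Q$ and at the prime $1+\sqrt{3}$ over $\mathcal{O}_{\Q(\sqrt{3})}$, deduces $\deg(a_n)=2^{n-1}$ over both fields, and then runs an induction through towers of quadratic extensions to identify $\Q(c_n)=K(c_n)=K(a_n)$ with $K=\Q(\sqrt{3})$, yielding degree $2^n$. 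Your cyclotomic argument is substantially shorter and conceptually cleaner, at the cost of importing the irreducibility of cyclotomic polynomials; the paper's argument is longer but elementary, staying within real trigonometric identities and Eisenstein's criterion, and along the way it produces the explicit towers of fields and minimal polynomials that the appendix is partly interested in exhibiting. Your handling of the degenerate case $n=0$ (where the coprimality fails) by direct computation is appropriate.
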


The proof is an extended exercise that uses standard trigonometric identities and well-known facts about field extensions.

\subsection{Basic identities and computations}

Define the numbers $a_n,b_n,c_n,d_n$ as follows, for all $n\geq 0$:

\begin{eqnarray}
a_n & = & 2\cos(\pi/2^n)\\
b_n & = & 2\sin(\pi/2^n)\\
c_n & = & 2\cos((\pi/3)+(\pi/2^n))\\
d_n & = & 2\cos((\pi/3)-(\pi/2^n))
\end{eqnarray}

It is easy to prove, say inductively, that these numbers are all algebraic.  In fact, since the angles in question are all obviously constructible, so are their sines and cosines (and also the doubles of these).
Therefore, their degrees must be powers of $2$.  Our task is to show that these powers are not lower than expected.

Next, we display certain standard trigonometric identities in terms of the numbers $a_n,b_n,c_n, d_n$.
These will be used in our arguments.  We also give a table of values of these numbers for $n\leq 2$.

\begin{eqnarray}
a_{n-1} & = & a_n^2-2.\\
a_{n-1} & = & 2-b_n^2.\\
b_{n-1} & = & a_nb_n.\\
c_n & = & \frac{1}{2}a_n-\frac{\sqrt{3}}{2}b_n.\\
d_n & = & \frac{1}{2}a_n+\frac{\sqrt{3}}{2}b_n.\\
d_{n-1} & = & 2-c_n^2.\\
a_{n-1} & = & c_nd_n+1.\\
c_{n-1} & = & 2-d_n^2.
\end{eqnarray}

\vspace{.2in}
\begin{center}
\begin{math}
\begin{array}{|c||c|c|c|c|}\hline
n & a_n & b_n & c_n & d_n\\
\hline\hline
0 & -2 & 0 & -1 & -1\\
\hline
1 & 0 & 2 & -\sqrt{3} & \sqrt{3}\\
\hline
2 & \sqrt{2} & \sqrt{2} & \frac{1-\sqrt{3}}{\sqrt{2}} & \frac{1+\sqrt{3}}{\sqrt{2}}\\
\hline
\end{array}
\end{math}
\end{center}

\vspace{.2in}

\subsection{The degrees of $a_n$ and $b_n$}

In addition to $\Q$, it will be convenient to work with the field $K=\Q(\sqrt{3})$.  This has class number $1$. That is, its ring of integers $\mathcal{O}_K$ is a $UFD$.  So, every irreducible in $\mathcal{O}_K$ is a prime.  Eisenstein's Theorem applies to $\mathcal{O}_K$.

\vspace{.1in}

We define polynomials $p_n(x)\in \Z[x],\quad n\geq 1$:
\begin{eqnarray*}
p_1(x) & = & x^2 -2\\
p_n(x) & = & p_1(p_{n-1}(x)).
\end{eqnarray*}

\begin{lemma}\label{L:valuesofpolynomial} For all $1\leq k\leq n$,
\begin{eqnarray*}
p_k(a_n) & = & a_{n-k}\\
p_k(b_n) & = & \left\{ \begin{array}{c}
                        -a_{n-1},\quad k=1\\
                        a_{n-k},\quad k\geq 2.
                        \end{array}\right .
\end{eqnarray*}
\end{lemma}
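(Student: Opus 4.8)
The plan is to prove the two formulas simultaneously by induction on $k$, using only the three recursions displayed above --- namely $a_{m-1}=a_m^2-2$, $a_{m-1}=2-b_m^2$, $b_{m-1}=a_mb_m$ --- together with the defining relation $p_{k+1}=p_1\circ p_k$ (so $p_k$ is the $k$-fold composite of $p_1$ with itself; the third recursion will in fact not even be needed). Throughout we use $1\le k\le n$, so that every subscript $n-k$ that appears is $\ge 0$ and every $a_{n-k}$ is defined.

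For the base case $k=1$ we simply read off $p_1(a_n)=a_n^2-2=a_{n-1}$, which is the first recursion with $m=n$, and $p_1(b_n)=b_n^2-2=-(2-b_n^2)=-a_{n-1}$, which is the second recursion with $m=n$; this is exactly the asserted $k=1$ statement. For the inductive step in the $a_n$ case, suppose $1\le k<n$ and $p_k(a_n)=a_{n-k}$. Since $k<n$ we have $n-k\ge 1$, so the first recursion applies with $m=n-k$ and gives
\[ p_{k+1}(a_n)=p_1\bigl(p_k(a_n)\bigr)=p_1(a_{n-k})=a_{n-k}^2-2=a_{n-k-1}=a_{n-(k+1)}. \]
For the $b_n$ case, the transition $k=1\to k=2$ is handled separately: $p_2(b_n)=p_1\bigl(p_1(b_n)\bigr)=p_1(-a_{n-1})=a_{n-1}^2-2=a_{n-2}$, where the last equality is the first recursion with $m=n-1$ (legitimate since $2\le n$). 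Once past this, for $2\le k<n$ the inductive step $p_k(b_n)=a_{n-k}\Rightarrow p_{k+1}(b_n)=p_1(a_{n-k})=a_{n-k-1}=a_{n-(k+1)}$ is word-for-word the computation from the $a_n$ case. This exhausts all cases and completes the induction.

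The argument is entirely routine; the only points that need a little attention are (i) tracking the range of validity, i.e.\ noting that the strict inequality $k<n$ is precisely what guarantees $n-k\ge 1$ so that the recursion $a_{m-1}=a_m^2-2$ can be invoked, and (ii) the sign bookkeeping for $b_n$, where the extra minus sign appearing at $k=1$ is squared away immediately at $k=2$ and never returns. I do not anticipate any genuine obstacle.
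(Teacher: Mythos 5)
Your proof is correct, and it is exactly the induction the paper has in mind — the paper simply states ``We omit the easy induction proof,'' so you have filled in the intended argument, with the base case coming from the identities $a_{n-1}=a_n^2-2$ and $a_{n-1}=2-b_n^2$ and the sign discrepancy for $b_n$ disappearing at $k=2$ because $p_1$ is even. Nothing further is needed.
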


We omit the easy induction proof.

\begin{lemma} $p_n(x)$ is irreducible over \Q\ and over $K$.
\end{lemma}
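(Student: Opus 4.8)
The plan is to prove both statements simultaneously by induction on $n$, using only the composition identities $p_n(x)=p_1(p_{n-1}(x))=p_{n-1}(x^2-2)=p_{n-1}(x)^2-2$ (which in particular give $\deg p_n=2^n$) together with one norm computation. Throughout, let $k$ denote either $\Q$ or $K=\Q(\sqrt 3)$. I would first record the elementary fact, used repeatedly, that $\sqrt 2\notin k$: for $k=\Q$ this is classical, and for $k=K$, writing $\sqrt2=r+s\sqrt3$ with $r,s\in\Q$ and squaring forces $rs=0$, whence either $\sqrt2\in\Q$ or $2=3s^2$, both impossible. This also disposes of the base case $n=1$, since $p_1(x)=x^2-2$ is irreducible over $k$ precisely because $2$ is not a square in $k$.

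For the inductive step, suppose $p_{n-1}$ is irreducible over $k$, so that for a root $\alpha$ of $p_{n-1}$ one has $[k(\alpha):k]=2^{n-1}$ and the minimal polynomial of $\alpha$ over $k$ is $p_{n-1}$ itself; in particular the $k$-conjugates of $\alpha$ are exactly the $2^{n-1}$ roots of $p_{n-1}$. Let $\beta$ be any root of $p_n$. Since $p_n(x)=p_{n-1}(x^2-2)$, the element $\alpha:=\beta^2-2$ is a root of $p_{n-1}$, hence $k(\alpha)\subseteq k(\beta)$ and $\beta$ is a root of $x^2-(\alpha+2)\in k(\alpha)[x]$. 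The step therefore reduces to showing that $\alpha+2$ is \emph{not} a square in $k(\alpha)$: granting this, $[k(\beta):k(\alpha)]=2$, so $[k(\beta):k]=2^n=\deg p_n$, which forces the monic polynomial $p_n$ to be the minimal polynomial of $\beta$ over $k$, i.e. irreducible over $k$.

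To see that $\alpha+2$ is not a square in $k(\alpha)$, I would compute the norm $N_{k(\alpha)/k}(\alpha+2)=\prod_i(\alpha_i+2)$, the product over the roots $\alpha_i$ of $p_{n-1}$; since $p_{n-1}$ is monic of even degree $2^{n-1}$ (here $n\ge 2$), this product equals $p_{n-1}(-2)$. An immediate induction --- or, if one prefers, the identity $p_m(2\cos\theta)=2\cos(2^m\theta)$ evaluated at $\theta=\pi$ --- gives $p_m(-2)=2$ for all $m\ge 1$, so $N_{k(\alpha)/k}(\alpha+2)=2$. If $\alpha+2=\gamma^2$ with $\gamma\in k(\alpha)$, then $N_{k(\alpha)/k}(\gamma)^2=N_{k(\alpha)/k}(\gamma^2)=2$ with $N_{k(\alpha)/k}(\gamma)\in k$, contradicting $\sqrt2\notin k$. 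This completes the induction.

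There is no serious obstacle; the only points needing care are the parity bookkeeping in the norm computation (one needs $2^{n-1}$ even so that $\prod_i(\alpha_i+2)=(-1)^{2^{n-1}}p_{n-1}(-2)=p_{n-1}(-2)$, which is why $n=1$ is split off as the base case) and the remark that the $k$-conjugates of $\alpha$ are literally the roots of $p_{n-1}$, which is exactly the inductive hypothesis. I note in passing that an alternative route to the $\Q$-part is to check, from the same recursion $p_n(x)=p_{n-1}(x)^2-2$, that $p_n$ is Eisenstein at the prime $2$ over $\Z$; and an alternative route to the $K$-part, once irreducibility over $\Q$ is known, is to observe that a root $\beta$ generates the maximal real subfield $\Q(\zeta_{2^{n+2}})^{+}$, whose unique quadratic subfield is $\Q(\sqrt2)\ne\Q(\sqrt3)$, so $[K(\beta):K]=2^n$. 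The norm argument, however, treats both fields uniformly and uses nothing beyond what is already available.
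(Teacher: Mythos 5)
Your proof is correct, and it takes a genuinely different route from the paper's. The paper first shows by induction that $p_n(x)$ has the shape $x^{2^n}+2xq(x)\pm 2$ and then invokes Eisenstein's criterion, at the prime $2$ for $\Q$ and at the prime $1+\sqrt{3}$ for $K=\Q(\sqrt{3})$; you instead run an induction up the tower $k(\alpha)\subset k(\beta)$ with $\alpha=\beta^2-2$, reducing the step to showing $\beta^2=\alpha+2$ is not a square in $k(\alpha)$, which you settle by the norm computation $N_{k(\alpha)/k}(\alpha+2)=p_{n-1}(-2)=2$ together with $\sqrt{2}\notin k$. All the steps check out: the conjugates of $\alpha$ are exactly the roots of $p_{n-1}$ by the inductive hypothesis, the sign $(-1)^{2^{n-1}}$ is handled correctly by splitting off $n=1$, and $p_m(-2)=2$ follows at once from the recursion. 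What your approach buys is significant: it is uniform in the ground field, using only that $2$ is not a square there, whereas the paper's Eisenstein argument over $K$ is actually broken. Indeed $2$ ramifies in $\Q(\sqrt{3})$: one has $(1+\sqrt{3})^2=2(2+\sqrt{3})$ with $2+\sqrt{3}$ a unit of $\mathcal{O}_K$, so $2$ is an associate of $(1+\sqrt{3})^2$ and the constant term $\pm 2$ of $p_n$ \emph{is} divisible by $(1+\sqrt{3})^2$, violating the Eisenstein hypothesis; since every non-leading coefficient of $p_n$ is divisible by $2$, no choice of prime in $\mathcal{O}_K$ rescues that argument. Your norm argument (or the cyclotomic remark you make in passing, that a root of $p_n$ generates the cyclic field $\Q(\zeta_{2^{n+2}})^{+}$ whose unique quadratic subfield is $\Q(\sqrt{2})\neq\Q(\sqrt{3})$) is therefore not merely an alternative but a genuine repair of the proof over $K$; the paper's Eisenstein argument remains valid over $\Q$.
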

\begin{proof}

We show first that $p_n(x)$ has the form $x^{2^n}+2xq(x)\pm 2$,
for some $q(x)\in \Z[x]$.  When $n=1$ this is immediate from the definition.  Assume the
result for $n-1\geq 1$ and compute
\[ p_n(x)=p_{n-1}(x)^2-2= x^{2^n}+2x(2q(x)x^{2^{n-1}}\pm 2x^{2^{n-1}-1}+2xq(x)^2\pm 4q(x))+2,\]
which has the desired form.

Now consider first the case of \Q.  We use the prime $2\in \Z$, and we apply Eisenstein's criterion to $p_n(x)$, which clearly satisfies it.  Thus $p_n(x)$ is irreducible over \Q.

For the case $K=\Q(\sqrt{3})$, we use the prime $1+\sqrt{3}\in \mathcal{O}_K$.  (To see that $1+\sqrt{3}$ is irreducible in $\mathcal{O}_K$, compute the norm $N(1+\sqrt{3})=(1+\sqrt{3})(1-\sqrt{3})=-2$, which is prime in \Z.  Since $\mathcal{O}_K$ is a UFD, $1+\sqrt{3}$ is prime.)  Again, Eisenstein's criterion is seen to be satisfied. So $p_n(x)$ is irreducible over $K$. \end{proof}

By Lemma \ref{L:valuesofpolynomial}, $p_{n-1}(a_n)=p_{n-1}(b_n)=a_1=0$, for $n\geq 2$.  Therefore, using a direct calculation to take care of the case $n=1$, we have:

\begin{cor}\label{C:degreean} $deg_{\Q}(a_n)= deg_{\Q}(b_n)=deg_{K}(a_n)=deg_{K}(b_n)=2^{n-1}$, for $n\geq 1$.
\end{cor}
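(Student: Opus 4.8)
The plan is to read the corollary straight off the two preceding lemmas, with a one-line separate check at $n=1$. First I would record the single numerical input needed, namely $a_1 = 2\cos(\pi/2) = 0$. Combined with Lemma~\ref{L:valuesofpolynomial}, this gives for every $n\geq 2$ that $p_{n-1}(a_n) = a_{n-(n-1)} = a_1 = 0$, and likewise $p_{n-1}(b_n) = 0$: when $n\geq 3$ this is the case $k=n-1\geq 2$ of the lemma ($p_{n-1}(b_n)=a_1=0$), and when $n=2$ it is the case $k=1$ ($p_1(b_2) = -a_1 = 0$). So for all $n\geq 2$, both $a_n$ and $b_n$ are roots of $p_{n-1}(x)$.

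Next I would observe that the recursion $p_n = p_1\circ p_{n-1}$ with $p_1(x)=x^2-2$ forces $p_{n-1}(x)$ to be monic of degree $2^{n-1}$ in $\Z[x]$, and that it is irreducible over \Q\ and over $K=\Q(\sqrt{3})$ by the irreducibility lemma just proved. Since a monic irreducible polynomial that vanishes at a given algebraic number is exactly that number's minimal polynomial over the ground field in question, applying this to $a_n$ and to $b_n$, over \Q\ and over $K$, yields $\deg_{\Q}(a_n) = \deg_{\Q}(b_n) = \deg_{K}(a_n) = \deg_{K}(b_n) = \deg p_{n-1} = 2^{n-1}$ for all $n\geq 2$. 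Finally, for $n=1$ I would simply compute $a_1 = 2\cos(\pi/2) = 0$ and $b_1 = 2\sin(\pi/2) = 2$, both rational (as the $n=1$ row of the table records), so each has degree $1 = 2^0$ over \Q\ and over $K$.

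No step here is genuinely hard: the real content — irreducibility of $p_n$ via Eisenstein and Gauss (over \Q\ using the prime $2$, over $K$ using the prime $1+\sqrt{3}$), together with the inductive identities $p_k(a_n)=a_{n-k}$ and $p_k(b_n)=\pm a_{n-k}$ — has already been carried out in the two lemmas. The only attention required is bookkeeping: confirming that $p_{n-1}$ is monic of degree precisely $2^{n-1}$ from its recursive definition, noticing that the $k=1$ branch of the $b_n$-identity still delivers $p_{n-1}(b_n)=0$ in the boundary case $n=2$, and disposing of $n=1$ by hand since $p_0$ is not defined.
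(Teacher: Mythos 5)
Your proposal is correct and follows essentially the same route as the paper: apply Lemma~\ref{L:valuesofpolynomial} to see that $a_n$ and $b_n$ are roots of the irreducible polynomial $p_{n-1}$ of degree $2^{n-1}$ for $n\geq 2$, and dispose of $n=1$ by direct computation. Your explicit check of the $k=1$ branch for $p_1(b_2)=-a_1=0$ and of the monic degree $2^{n-1}$ is bookkeeping the paper leaves implicit, but the argument is the same.
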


\vspace{.2in}

\subsection{The fields $F(a_n), F(b_n), F(c_n), F(d_n)$, for $F=\Q, K$.} \quad From identity (64), we may conclude that
\begin{equation}\label{E:F(an)increasing}
F(a_n) < F(a_{n+1}),
\end{equation}
for all $n\geq 0$ and $F=\Q$\ or $K$.  Identity (66), together with  $F(a_0)=F(a_1)=F$ and Corollary \ref{C:degreean}, allows one to prove inductively that $F(a_n)=F(b_n)$, for all $n$.  We leave this to the reader.  Using this and (\ref{E:F(an)increasing}), we get
\begin{equation}\label{E:F(b_n)increasing}
F(b_n) < F(b_{n+1}),
\end{equation}
for all $n\geq 0$.   Each of the extensions in (\ref{E:F(an)increasing}) and (\ref{E:F(b_n)increasing})
has degree $2$, for $n\geq 1$, by Corollary \ref{C:degreean}.

Equations (69) and (71) yield two infinite towers of field extensions
\[ K= K(d_1)< K(c_2) < K(d_3) < \ldots\]
and
\[ K= K(c_1)< K(d_2) < K(c_3) < \ldots,\]
where each extension has degree $\leq 2$.

\begin{lemma} For all $n\geq 0$, $K(c_n)=K(d_n)=K(a_n)=K(b_n)$.
\end{lemma}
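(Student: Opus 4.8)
The plan is to prove the lemma by induction on $n$, with the identities (67)--(71) and the already-established equality $K(a_n)=K(b_n)$ doing all the work; no degree computation beyond that equality is needed. First observe that one inclusion is essentially free: identities (67), $c_n=\tfrac12 a_n-\tfrac{\sqrt3}{2}b_n$, and (68), $d_n=\tfrac12 a_n+\tfrac{\sqrt3}{2}b_n$, express $c_n$ and $d_n$ as $\Q(\sqrt3)$-linear combinations of $a_n$ and $b_n$; since $b_n\in K(a_n)$ and $\sqrt3\in K$, this gives $K(c_n)\subseteq K(a_n)$ and $K(d_n)\subseteq K(a_n)$. Because $K(a_n)=K(b_n)$ is known, it therefore suffices to prove the reverse inclusions $K(a_n)\subseteq K(c_n)$ and $K(a_n)\subseteq K(d_n)$.

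For the base case $n=0$ the table gives $c_0=d_0=-1$, $a_0=-2$, $b_0=0$, all rational, so all four fields equal $K$. Now fix $n\geq1$ and assume the lemma for $n-1$; in particular $K(c_{n-1})=K(d_{n-1})=K(a_{n-1})$. To obtain $K(a_n)\subseteq K(c_n)$, proceed along the following chain: identity (69), $d_{n-1}=2-c_n^2$, shows $d_{n-1}\in K(c_n)$, so the inductive hypothesis yields $a_{n-1}\in K(a_{n-1})=K(d_{n-1})\subseteq K(c_n)$; identity (70) rewritten as $c_nd_n=a_{n-1}-1$, together with $c_n=2\cos(\pi/3+\pi/2^n)\neq0$ for all $n\geq0$ (that angle is never an odd multiple of $\pi/2$), gives $d_n=(a_{n-1}-1)/c_n\in K(c_n)$; and adding (67) and (68) gives $c_n+d_n=a_n$, so $a_n\in K(c_n)$. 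The inclusion $K(a_n)\subseteq K(d_n)$ follows by the symmetric argument: identity (71), $c_{n-1}=2-d_n^2$, gives $c_{n-1}\in K(d_n)$, hence $a_{n-1}\in K(a_{n-1})=K(c_{n-1})\subseteq K(d_n)$; then $c_n=(a_{n-1}-1)/d_n\in K(d_n)$ using $d_n=2\cos(\pi/3-\pi/2^n)\neq0$; and $a_n=c_n+d_n\in K(d_n)$.

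Combining these two inclusions with $K(a_n)=K(b_n)$ gives $K(c_n)=K(d_n)=K(a_n)=K(b_n)$, completing the induction. The only points needing a moment's care are the non-vanishing of $c_n$ and $d_n$, which legitimizes the two divisions, and feeding the inductive hypothesis in correctly (using $K(d_{n-1})=K(a_{n-1})$ in the $c_n$ chain and $K(c_{n-1})=K(a_{n-1})$ in the $d_n$ chain); there is no genuine obstacle, and in particular the two cross-linked towers displayed just before the lemma are exactly what make the two reverse inclusions feed each other.
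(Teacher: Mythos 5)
Your proof is correct, and it takes a recognizably different logical route from the paper's even though it runs on exactly the same identities. The paper sandwiches $K(c_n)$ between $K(a_{n-1})$ and $K(a_n)$ (using (69) and the inductive hypothesis for the lower inclusion, (67)--(68) for the upper), invokes $[K(a_n):K(a_{n-1})]=2$ from Corollary C.3 to reduce the problem to the dichotomy ``$K(c_n)=K(a_{n-1})$ or $K(c_n)=K(a_n)$,'' and then rules out the first alternative by contradiction: if $c_n\in K(a_{n-1})$ then (70) forces $d_n\in K(a_{n-1})$ and hence $a_n=c_n+d_n\in K(a_{n-1})$, which is impossible. You instead prove the reverse inclusion $K(a_n)\subseteq K(c_n)$ directly by the chain $d_{n-1}\in K(c_n)$ (from (69)), hence $a_{n-1}\in K(c_n)$ (inductive hypothesis), hence $d_n=(a_{n-1}-1)/c_n\in K(c_n)$ (from (70)), hence $a_n=c_n+d_n\in K(c_n)$. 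What your version buys is independence from the degree computation $[K(a_n):K(a_{n-1})]=2$: the lemma becomes a purely formal consequence of the identities and the equality $K(a_n)=K(b_n)$, with no appeal to irreducibility of $p_n(x)$ over $K$. What the paper's version buys is that the division by $c_n$ (and $d_n$) in (70) occurs only inside a hypothetical being refuted, though in truth both arguments need $c_n,d_n\neq 0$ at that step; you are right to flag this explicitly, and your justification (the angles $\pi/3\pm\pi/2^n$ are never odd multiples of $\pi/2$, since $1/2^n\neq 1/6$ and $1/2^n\neq 5/6$) is sound, as the table values $c_0=d_0=-1$, $c_1=-\sqrt3$, $d_1=\sqrt3$ also confirm for small $n$. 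Your observation that the base case $n=0$ alone suffices (the paper also checks $n=1$) is likewise correct, since your inductive step goes through verbatim from $n=0$ to $n=1$.
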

\begin{proof} We already have the last equality. The proof of the remaining equalities is by induction on $n$.  The cases $n=0, 1$ are obvious, using the chart of computed values.  Assume the result for $n-1$. Since, $K(a_n)=K(b_n)$, equations (67) and (68) imply that
$c_n, d_n \in K(a_n)$.  Thus, we have
\[ K(a_{n-1})=K(c_{n-1})=K(d_{n-1})< K(c_n), K(d_n) < K(a_n).\]
Therefore, since $[K(a_n):K(a_{n-1})]=2$, it suffices to show that $c_n, d_n\not\in K(a_{n-1})$.  But, if $c_n\in K(a_{n-1})$, then equation (70) would show that $d_n\in K(a_{n-1})$. So, adding identities (67) and (68), we could conclude that $a_n\in K(a_{n-1})$, which is not possible.  Therefore, $c_n\not\in K(a_{n-1})$.  Similarly, $d_n\not\in K(a_{n-1})$. This concludes the induction.
\end{proof}

Combining this with Corollary \ref{C:degreean}, we get

\begin{cor}\label{C:degreecn}  $deg_K(c_n)=deg_K(d_n) = 2^{n-1}$, for all $n\geq 1$.
\end{cor}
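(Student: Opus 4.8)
The plan is to read the corollary off directly from the Lemma that immediately precedes it, together with Corollary \ref{C:degreean}. Recall that for a field $K$ and an element $x$ algebraic over $K$, the quantity $\deg_K(x)$ is by definition the degree of the minimal polynomial of $x$ over $K$, i.e. $\deg_K(x)=[K(x):K]$. So the assertion to be proved is simply that $[K(c_n):K]=[K(d_n):K]=2^{n-1}$ for all $n\geq 1$.

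First I would invoke the preceding Lemma, which states that $K(c_n)=K(d_n)=K(a_n)=K(b_n)$ for every $n\geq 0$. Since the four simple extensions of $K$ generated by $a_n,b_n,c_n,d_n$ therefore coincide, they in particular have the same degree over $K$, so $[K(c_n):K]=[K(d_n):K]=[K(a_n):K]=\deg_K(a_n)$. Then I would apply Corollary \ref{C:degreean}, which asserts precisely that $\deg_K(a_n)=2^{n-1}$ for $n\geq 1$ (and likewise $\deg_K(b_n)=2^{n-1}$). Chaining these equalities yields $\deg_K(c_n)=\deg_K(d_n)=2^{n-1}$ for all $n\geq 1$, which is the statement.

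There is essentially no obstacle in the corollary itself; the genuine work has already been done. The substantive step was the preceding Lemma, whose proof is by induction on $n$: the base cases $n=0,1$ are verified against the table of computed values, and the inductive step uses identities (67), (68) and (70) together with the fact that $[K(a_n):K(a_{n-1})]=2$ to exclude $c_n,d_n\in K(a_{n-1})$, thereby forcing $K(c_n)=K(d_n)=K(a_n)$. Once that Lemma is in hand, the corollary is a one-line consequence, and the only point requiring care is the bookkeeping of which earlier results are being combined (the Lemma for the equality of fields, Corollary \ref{C:degreean} for the numerical value of the degree).
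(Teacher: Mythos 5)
Your proposal is correct and matches the paper's proof exactly: the corollary is obtained by combining the preceding Lemma's field equalities $K(c_n)=K(d_n)=K(a_n)=K(b_n)$ with Corollary \ref{C:degreean}. Nothing further is needed.
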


It remains to obtain a similar result for the field \Q.

Similarly to what we deduced above from equations (69) and (71), we have $\Q(c_{n-1}) < \Q(d_n)$ and $\Q(d_{n-1}) < \Q(c_n)$, for all $n\geq 2$.  We now compute:

\[\Q(c_1)=\Q(\sqrt{3})= K = K(c_1),\]
and, similarly, $\Q(d_1)=K(d_1)$.  Now, assume inductively that $\Q(c_{n-1})=K(c_{n-1})$ and
$\Q(d_{n-1})=K(d_{n-1})$.  Then, we have
\[ \Q(d_n)=\Q(c_{n-1})(d_n)=K(c_{n-1})(d_n)=K(d_n)\]
and
\[ \Q(c_n)=\Q(d_{n-1})(c_n)=K(d_{n-1})(c_n)=K(c_n).\]

Therefore,

\begin{eqnarray*}
deg_{\Q}(c_n) & = & [\Q(c_n):\Q]\\
& = & [K(c_n):\Q]\\
& = & [K(a_n):\Q]\\
& = & [K(a_n):K][K:\Q]\\
&= & 2^n.
\end{eqnarray*}

Similarly for $deg_{\Q}(d_n)$.

This completes the proof of Proposition 5

\vspace{.3in}

\end{appendix}

\end{document}